\newcommand{\NN}{{\mathbb N}}
\newcommand{\RR}{{\mathbb R}}
\newcommand{\ZZ}{{\mathbb Z}}
\newcommand{\abs}[1]{ \left| #1 \right|}
\newcommand{\del}{\partial}
\newcommand{\nv}{^{-1}}
\newcommand{\nor}[2]{\left\|#1\right\|_{#2}}
\newcommand{\oline}[1]{\overline{#1}}
\newcommand{\oo}{\infty}
\newcommand{\pars}[1]{\left(#1\right)}
\newcommand{\mcl}{\mathcal}
\newcommand{\mbb}{\mathbb}
\newcommand{\mbf}{\mathbf}
\newtheorem{lemma}{Lemma}
\newtheorem{theorem}{Theorem}
\newtheorem{definition}{Definition}
\numberwithin{equation}{section}
\numberwithin{lemma}{section}
\numberwithin{proposition}{section}
\numberwithin{theorem}{section}
\numberwithin{corollary}{section}
\numberwithin{definition}{section}
\begin{document}

\begin{frontmatter}

% "Title of the paper"
\title{Approximation schemes for viscosity solutions of fully nonlinear stochastic partial differential equations}
\runtitle{Approximation schemes for fully nonlinear SPDE}

% indicate corresponding author with \corref{}
% \author{\fnms{John} \snm{Smith}\corref{}\ead[label=e1]{smith@foo.com}\thanksref{t1}}
% \thankstext{t1}{Thanks to somebody} 
% \address{line 1\\ line 2\\ printead{e1}}
% \affiliation{Some University}

\author{\fnms{Benjamin} \snm{Seeger}\thanksref{nsf}\ead[label=e1]{seeger@ceremade.dauphine.fr}}
\address{Place du Mar\'echal de Lattre de Tassigny\\ 75016 Paris, France\\ \printead{e1}}
\affiliation{Universit\'e Paris-Dauphine and Coll\`ege de France}
%\and
%\author{\fnms{???} \snm{???}\ead[label=e2]{???}}
%\address{\printead{e2}}
%\affiliation{???}

\thankstext{nsf}{Partially supported by the National Science Foundation Mathematical Sciences Postdoctoral Research Fellowship under Grant Number DMS-1902658} 

\runauthor{Benjamin Seeger}

\begin{abstract}
	The aim of this paper is to develop a general method for constructing approximation schemes for viscosity solutions of fully nonlinear pathwise stochastic partial differential equations, and for proving their convergence. Our results apply to approximations such as explicit finite difference schemes and Trotter-Kato type mixing formulas. The irregular time dependence disrupts the usual methods from the classical viscosity theory for creating schemes that are both monotone and convergent, an obstacle that cannot be overcome by incorporating higher order correction terms, as is done for numerical approximations of stochastic or rough ordinary differential equations. The novelty here is to regularize those driving paths with non-trivial quadratic variation in order to guarantee both monotonicity and convergence.
		
	We present qualitative and quantitative results, the former covering a wide variety of schemes for second-order equations. An error estimate is established in the Hamilton-Jacobi case, its merit being that it depends on the path only through the modulus of continuity, and not on the derivatives or total variation. As a result, it is possible to choose a regularization of the path so as to obtain efficient rates of convergence. This is demonstrated in the specific setting of equations with multiplicative white noise in time, in which case the convergence holds with probability one. We also present an example using scaled random walks that exhibits convergence in distribution.
\end{abstract}

%\begin{keyword}[class=MSC]
%\kwd[Primary ]{}
%\kwd{}
%\kwd[; secondary ]{}
%\end{keyword}

\begin{keyword}[class=MSC]
\kwd{65M12}
\kwd{65M06}
\kwd{60H15}
\kwd{35R60}
\kwd{35K55}
\kwd{35D40}
\end{keyword}

\begin{keyword}
\kwd{stochastic viscosity solutions, finite difference schemes, monotone schemes, splitting formulae, error estimates}
\end{keyword}

\end{frontmatter}

% AOS,AOAS: If there are supplements please fill:
%\begin{supplement}[id=suppA]
%  \sname{Supplement A}
%  \stitle{Title}
%  \slink[doi]{10.1214/00-AOASXXXXSUPP}
%  \sdatatype{.pdf}" 
%  \sdescription{Some text}
%\end{supplement}

\section{Introduction} \label{S:intro}

We construct numerical schemes to approximate viscosity solutions of fully nonlinear pathwise stochastic partial differential equations, and prove that they converge under quite general assumptions. Among the approximations that we study are finite-difference schemes and Trotter-Kato type product formulas. The former raise the possibility of numerical implementation, which we justify with precise error estimates in the first-order setting.

More precisely, given a finite horizon $T > 0$, we consider pathwise viscosity solutions of the initial value problem 
\begin{equation}\label{E:eq}
	du = F(D^2 u, Du)\; dt + \sum_{i=1}^m H^i(Du)\circ dW^i \quad \text{in } \RR^d \times (0,T] \quad \text{and} \quad u(\cdot,0) = u_0 \quad \text{in } \RR^d, 
\end{equation}
where $W = (W^1, W^2, \ldots, W^m) : [0,T] \to \RR^m$ is a continuous path and the initial datum $u_0: \RR^d \to \RR$ is bounded and uniformly continuous. The precise assumptions on $H = (H^1, H^2, \ldots, H^m): \RR^d \to \RR^m$ and $F: S^d \times \RR^d \to \RR$, where $S^d$ is the space of symmetric matrices, are specified later. We emphasize here that $F$ is assumed to be degenerate elliptic, that is, $F(X,p) \le F(Y,p)$ whenever $p \in \RR^d$ and $X,Y \in S^d$ satisfy $X \le Y$.

The technical assumptions and theorems are stated in full generality later in the paper. First, we describe the main results in a simplified context to provide a flavor for what is to follow.  Afterwards, we provide some background on the notion of pathwise viscosity solutions, the history of the study of the equation, and its applications. The Introduction concludes with a description of the organization of the rest of the paper.

We note that, in the sequel, the term ``classical viscosity theory'' refers to the Crandall-Ishii-Lions \cite{CIL} theory of viscosity solutions, which applies to \eqref{E:eq} when $W$ is continuously differentiable, or to the theory of equations with $L^1$-time dependence put forth by Ishii \cite{I} and Lions and Perthame \cite{LP} for Hamilton-Jacobi equations, and by Nunziante \cite{N} in the second order case, which includes \eqref{E:eq} when $W$ has bounded variation. 

\subsection{The main results}
Assume for now that $d = m = 1$, $F$ and $H$ are both smooth, and $F$ depends only on $u_{xx}$, so that \eqref{E:eq} becomes
\begin{equation}\label{E:simpleeqintro}
	du = F(u_{xx})\;dt + H(u_x)\circ dW \quad \text{in } \RR \times (0,T] \quad \text{and} \quad u(\cdot,0) = u_0 \quad \text{in } \RR,
\end{equation}
or, in the first order case, when $F \equiv 0$,
\begin{equation} \label{E:simpleeqintrofirstorder}
	du = H(u_x)\circ dW \quad \text{in } \RR \times (0,T] \quad \text{and} \quad u(\cdot,0) = u_0 \quad \text{in } \RR.
\end{equation}
Here and throughout the paper, the solutions of equations like \eqref{E:eq}, \eqref{E:simpleeqintro}, and \eqref{E:simpleeqintrofirstorder} are to be understood in the pathwise, or stochastic, viscosity sense (see Definitions \ref{D:smoothH} and \ref{D:nonsmoothH} below).

The approximations are constructed through the use of a scheme operator, which, for $h > 0$, $0 \le s \le t \le T$, and $\zeta \in C([0,T]; \RR)$, is a map $S_h(t,s; \zeta): BUC(\RR) \to BUC(\RR)$, whose properties will be made more precise in Section \ref{S:scheme}.  Here, $BUC(\RR^d)$ is the space of bounded, uniformly continuous functions on $\RR^d$. 

Throughout the paper, the symbol $\mcl P$ denotes a partition of $[0,T]$ and $\abs{ \mcl P}$ its mesh size, that is,
\[
	\mcl P := \{0 = t_0 < t_1 < \cdots <  t_N = T\} \quad \text{and} \quad \abs{ \mcl P} := \max_{n = 0, 1, \ldots, N-1} \pars{ t_{n+1} - t_n}.
\]
%Given such a partition $\mcl P$ and a path $\zeta \in C([0,T]; \RR)$, usually a piecewise linear approximation of $W$, we first define the function $\tilde u_h(\cdot; \zeta, \mcl P)$ by
%\begin{equation} \label{E:introapproxsolution}
%	\begin{dcases}
%		\tilde u_h(\cdot,0;\zeta, \mcl P ) := u_0, & \\[1.2mm]
%		\tilde u_h(\cdot,t; \zeta, \mcl P) := S_h(t,t_n; \zeta)\tilde u_h(\cdot,t_n; \zeta, \mcl P) & \text{for } n = 0, 1, \ldots, N-1 \text{ and } t \in (t_n, t_{n+1}].
%	\end{dcases}
%\end{equation}
%The strategy is to choose families of approximating paths $\{W_h\}_{h > 0}$ and partitions $\{\mcl P_h\}_{h > 0}$ satisfying
%\begin{equation} \label{A:introapproximators}
%	\lim_{h \to 0^+} \nor{W_h - W}{\oo} = 0 = \lim_{h \to 0^+} \abs{ \mcl P_h},
%\end{equation}
%in such a way that the function
%\begin{equation} \label{E:introschemedef}
%	u_h(x,t) := \tilde u_h(x,t; W_h, \mcl P_h)
%\end{equation}
%is an efficient approximation of the solution of \eqref{E:eq}.

Given such a partition $\mcl P$ and a path $\zeta \in C([0,T]; \RR)$, usually a piecewise linear approximation of $W$, we first define the function $v_h(\cdot; \zeta, \mcl P)$ by
\begin{equation} \label{E:introapproxsolution}
	\begin{dcases}
		v_h(\cdot,0;\zeta, \mcl P ) := u_0, & \\[1.2mm]
		v_h(\cdot,t; \zeta, \mcl P) := S_h(t,t_n; \zeta)v_h(\cdot,t_n; \zeta, \mcl P) & \text{for } n = 0, 1, \ldots, N-1 \text{ and } t \in (t_n, t_{n+1}].
	\end{dcases}
\end{equation}

The strategy is to choose families of approximating paths $\{W_h\}_{h > 0}$ and partitions $\{\mcl P_h\}_{h > 0}$ satisfying
\begin{equation} \label{A:introapproximators}
	\lim_{h \to 0^+} \nor{W_h - W}{\oo} = 0 = \lim_{h \to 0^+} \abs{ \mcl P_h},
\end{equation}
in such a way that the function
\begin{equation} \label{E:introschemedef}
	u_h(x,t) := v_h(x,t; W_h, \mcl P_h)
\end{equation}
is an efficient approximation of the solution of \eqref{E:eq}.

As an example of the types of schemes studied in this paper, we consider here the following adaptation of the Lax-Friedrichs finite difference approximation, a formulation for which can be found in the work of Crandall and Lions \cite{CL} in the classical viscosity setting. 

For some $\epsilon_h > 0$, define
\begin{equation} \label{E:introLF}
	\begin{split}
	S_h(t,s; \zeta)u(x) &:= u(x) + H \pars{ \frac{u(x+h) - u(x-h)}{2h} } (\zeta(t) - \zeta(s)) \\
	&+ \pars{ F\pars{ \frac{u(x+h) + u(x-h) - 2u(x)}{h^2} } \right.\\
	&+ \left.\epsilon_h \pars{ \frac{u(x+h) + u(x-h) - 2u(x)}{h^2} } }(t-s).
	\end{split}
\end{equation}

The first result, which is qualitative in nature, applies to the simple setting above as follows:

\begin{theorem} \label{T:introLFresult}
	Assume that, in addition to \eqref{A:introapproximators}, $W_h$ and $\mcl P_h$ satisfy
	\[
		\abs{ \mcl P_h} \le \frac{h^2}{\nor{F'}{\oo}} \quad \text{and} \quad \epsilon_h := h \nor{\dot W_h}{\oo} \xrightarrow{h \to 0} 0.
	\]
	Then, as $h \to 0$, the function $u_h$ defined by \eqref{E:introschemedef} using the scheme operator \eqref{E:introLF} converges locally uniformly to the solution $u$ of \eqref{E:simpleeqintro}.
\end{theorem}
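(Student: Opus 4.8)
The plan is to use the Barles--Souganidis framework, adapted to the pathwise setting, which reduces convergence to verifying three properties of the scheme operator $S_h$: monotonicity (comparison principle at the discrete level), consistency, and stability. The key difference from the classical theory is that because $W_h$ is piecewise linear with $\nor{\dot W_h}{\oo} \to \infty$ in general, the term $H(\cdot)(\zeta(t)-\zeta(s))$ alone is not monotone on the time scale dictated by the CFL condition; the role of the artificial viscosity $\epsilon_h = h\nor{\dot W_h}{\oo}$ is precisely to restore monotonicity while remaining small enough to not destroy consistency. So the first step is to show that, under the stated restrictions on $\abs{\mcl P_h}$ and $\epsilon_h$, the map $u \mapsto S_h(t,s;W_h)u(x)$ is nondecreasing in the values $u(x\pm h)$ and $u(x)$ for $t-s \le \abs{\mcl P_h}$. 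Writing $S_h u(x)$ as a function of the three values $a^- = u(x-h)$, $a^0 = u(x)$, $a^+ = u(x+h)$, the partial derivatives in $a^\pm$ pick up a contribution $\pm \tfrac{1}{2h}H'(\cdot)(\zeta(t)-\zeta(s))$ from the Hamiltonian term, a contribution $\tfrac{1}{h^2}(F'(\cdot)+\epsilon_h)(t-s) \ge 0$ from the diffusive term, and the derivative in $a^0$ contributes $1 - \tfrac{2}{h^2}(F'+\epsilon_h)(t-s)$; the CFL condition $\abs{\mcl P_h}\le h^2/\nor{F'}{\oo}$ together with $\epsilon_h \to 0$ handles the $a^0$ term for $h$ small, and the bound $|\zeta(t)-\zeta(s)| \le \nor{\dot W_h}{\oo}(t-s) \le \nor{\dot W_h}{\oo}\abs{\mcl P_h} \le h\nor{\dot W_h}{\oo}\cdot (h/\nor{F'}{\oo})\cdot\nor{F'}{\oo}/h$... more simply, $\tfrac{1}{2h}\nor{H'}{\oo}|\zeta(t)-\zeta(s)| \le \tfrac{1}{2h}\nor{H'}{\oo}\nor{\dot W_h}{\oo}(t-s) = \tfrac{\nor{H'}{\oo}}{2}\cdot\tfrac{\epsilon_h}{h^2}(t-s)$, which is dominated by the $\epsilon_h/h^2$ term in $a^\pm$ as soon as $\epsilon_h \ge \tfrac{1}{2}\nor{H'}{\oo}\epsilon_h$, i.e. always (for $\nor{H'}{\oo}\le 2$) or after a harmless rescaling of $\epsilon_h$ by a constant. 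This is the computation that makes monotonicity work.

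Next I would establish stability: since $S_h$ is monotone and commutes with addition of constants (it does, up to $O(t-s)$ terms from $F(0)$, which can be absorbed), comparison with the constant sub/supersolutions $\nor{u_0}{\oo} \pm C t$ gives a uniform-in-$h$ bound $\nor{u_h}{\oo} \le \nor{u_0}{\oo} + CT$. One also needs equicontinuity in $x$ and $t$ — in $x$ this follows by the standard argument of comparing $u_h(\cdot,t)$ with its spatial translate plus the modulus of $u_0$ (monotonicity propagates spatial moduli of continuity, with an additive correction controlled by $\nor{W_h}{\oo}$ which is uniformly bounded), and in $t$ it follows from consistency plus the spatial regularity. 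The consistency check is a Taylor expansion: for a smooth test function $\phi$,
\[
\frac{S_h(t,s;W_h)\phi(x) - \phi(x)}{t-s} = F(\phi_{xx}(x)) + \epsilon_h \phi_{xx}(x) + H(\phi_x(x))\frac{\zeta(t)-\zeta(s)}{t-s} + O(h^2/(t-s)) + O(h) + \ldots,
\]
and since $\epsilon_h \to 0$, $h^2/(t-s)$ stays bounded (indeed one wants $h^2 \sim \abs{\mcl P_h} \sim t-s$ along the scheme so this term is $O(1)$ but actually $O(h^2)$ relative to the scheme when $\phi$ is smooth — the finite difference quotients are second-order accurate so the error is $O(h^2)$ in the Hamiltonian part and $O(h^2)$ in the $F$ part, hence $o(1)$), this recovers the equation \eqref{E:simpleeqintro} in the limit, with the piecewise-linear $\zeta=W_h$ producing, along the partition, the correct Riemann--Stieltjes (hence Stratonovich, by the pathwise theory set up earlier) increments of $W$.

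The passage to the limit then follows the half-relaxed limits method: set $\bar u = \limsup^* u_h$ and $\underline u = \liminf_* u_h$ (these are finite by stability), show using monotonicity and consistency that $\bar u$ is a pathwise viscosity subsolution and $\underline u$ a pathwise viscosity supersolution of \eqref{E:simpleeqintro} with $\bar u(\cdot,0) \le u_0 \le \underline u(\cdot,0)$, and invoke the comparison principle for pathwise viscosity solutions (which holds in this setting — this is part of the well-posedness theory the paper relies on) to conclude $\bar u \le \underline u$; since trivially $\underline u \le \bar u$, equality holds and $u_h \to u := \bar u = \underline u$ locally uniformly. The one subtlety in this last step specific to the pathwise setting is that the test functions in the definition of pathwise viscosity solution (Definitions \ref{D:smoothH}--\ref{D:nonsmoothH}) involve the path $W$ in a nonsmooth way, so the consistency estimate must be compatible with that class of test functions; because $W_h \to W$ uniformly and the relevant test functions are built from smooth functions of $x$ composed with the flow of $H$ along $W$, the uniform convergence $\nor{W_h - W}{\oo}\to 0$ together with continuity of that flow in the path suffices, and this is where the hypothesis \eqref{A:introapproximators} enters.

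\medskip
\noindent\textbf{Main obstacle.} The crux is the simultaneous management of three competing scales — the CFL-forced relation $\abs{\mcl P_h} \lesssim h^2$, the blow-up rate of $\nor{\dot W_h}{\oo}$, and the requirement $\epsilon_h = h\nor{\dot W_h}{\oo}\to 0$ — so that monotonicity holds (needs $\epsilon_h$ large relative to $h\nor{\dot W_h}{\oo}$) yet consistency survives (needs $\epsilon_h\to 0$). That these can be reconciled is not automatic: it forces $\nor{\dot W_h}{\oo} = o(1/h)$, i.e. the piecewise-linear interpolation must be taken on a time grid much coarser than $h$ while still refining; verifying that such a choice is consistent with $\nor{W_h - W}{\oo}\to 0$ for a general continuous (in the applications, Brownian) path, and that the resulting scheme genuinely sees the Stratonovich correction rather than an Itô one, is the delicate point that the pathwise regularization idea advertised in the abstract is designed to resolve.
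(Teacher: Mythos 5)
Your proposal follows essentially the same route as the paper: verify monotonicity (which forces $\nor{\dot W_h}{\oo}\lesssim \epsilon_h/h$, up to the harmless constant you note) and consistency (which forces $\epsilon_h\to 0$) for the regularized Lax--Friedrichs operator, then run the Barles--Souganidis half-relaxed-limits argument with test functions built from the Hamilton--Jacobi flow along $W_h$, and close with the pathwise comparison principle; this is exactly how the paper deduces the result from its general Theorem \ref{T:generalconvergence} together with the computations in the ``second order example'' subsection. (The equicontinuity you mention is not needed---avoiding it is the point of half-relaxed limits.)

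One caveat on the consistency step: your displayed expansion compares $S_h(t,s;W_h)\phi$ with a \emph{static} smooth $\phi$, and read literally it contains the term $H(\phi_x)\,\frac{W_h(t)-W_h(s)}{t-s}=H(\phi_x)\dot W_h$, which is of order $\epsilon_h/h$ and diverges, so it does not ``recover the equation in the limit.'' The correct formulation, as in \eqref{A:consistency}, compares $S_h(t_h,s_h;W_h)\Phi_h(\cdot,s_h)$ with $\Phi_h(\cdot,t_h)$ where $\Phi_h$ solves $d\Phi_h=H(D\Phi_h)\circ dW_h$: the first-order Hamiltonian increments then cancel, leaving a quadratic remainder bounded by $C\nor{\Phi_{h,xx}}{\oo}\,|W_h(t_h)-W_h(s_h)|^2\le C\lambda\,\nor{\Phi_{h,xx}}{\oo}\,\epsilon_h^2\,(t_h-s_h)$, and it is precisely here---not in the static expansion---that $\epsilon_h\to0$ is used. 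Your closing paragraphs show you understand this is the mechanism, but the argument as written should be reorganized so that the consistency estimate is carried out against the evolving $\Phi_h$ from the start.
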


We obtain explicit error estimates for finite difference approximations of the stochastic Hamilton-Jacobi equation \eqref{E:simpleeqintrofirstorder}. The results below are stated for the following scheme, which is defined, for some $\theta \in (0,1]$, by
\begin{equation} \label{E:introLFfirstorder}
	\begin{split}
	S_h(t,s; \zeta)u(x) := u(x) &+ H \pars{ \frac{u(x+h) - u(x-h)}{2h} } (\zeta(t) - \zeta(s)) \\
	&+ \frac{\theta}{2} \pars{ u(x+h) + u(x-h) - 2u(x) }.
	\end{split}
\end{equation} 
Note that this corresponds to choosing $\epsilon_h := \frac{\theta h^2}{2(t-s)}$ in \eqref{E:introLF}. 

The main tool for proving rates of convergence is the following pathwise estimate. For the remaining results in the introduction, it is assumed that, for some $L > 0$, the initial datum $u_0$ is Lipschitz continuous with $\nor{u_0'}{\oo} \le L$.

\begin{theorem}\label{T:intropathwise}
There exists $C > 0$ depending only on the Lipschitz constant $L$ such that, if $h > 0$, $\zeta \in C([0,T], \RR)$ is piecewise linear over the partition $\mcl P$ such that
\[
	\max_{n = 0, 1, \ldots, N-1} \abs{ \zeta(t_{n+1}) - \zeta(t_n)} \le \frac{\theta}{\nor{H'}{\oo}} h,
\]
and $v$ solves \eqref{E:simpleeqintrofirstorder} with the path $\zeta$, then, for all $\epsilon > 0$,
\[
	\sup_{(x,t) \in \RR \times [0,T]} \abs{ v_h(x,t; \zeta, \mcl P) -  v(x,t)}  \le \frac{1}{\epsilon} \sum_{n=0}^{N-1} (t_{n+1} - t_n)^2 + C \sqrt{N} h + \max_{s,t \in [0,T]} \left\{ C\abs{ \zeta(s) - \zeta(t)} - \frac{ |s-t|^2}{2 \epsilon} \right\}. 
\]
\end{theorem}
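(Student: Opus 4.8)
The plan is to compare the discrete evolution $v_h(\cdot,t_{n+1};\zeta,\mcl P)$ with the continuous solution $v$ by a one-step-error plus stability argument, and then to absorb the accumulated error using a doubling-of-variables estimate in time that only sees the modulus of continuity of $\zeta$. First I would establish a discrete comparison principle for the scheme operator $S_h(t,s;\zeta)$ in \eqref{E:introLFfirstorder}: under the hypothesis $\abs{\zeta(t_{n+1})-\zeta(t_n)}\le \frac{\theta}{\nor{H'}{\oo}}h$, the map $u(x)\mapsto S_h u(x)$ is monotone (it is a nonnegative-coefficient combination of $u(x)$, $u(x\pm h)$, since the coefficient of $u(x+h)$ is $\frac{\theta}{2}+\frac{1}{2h}H'(\cdots)(\zeta(t_{n+1})-\zeta(t_n))\ge 0$ and similarly for $u(x-h)$, and the coefficients sum to $1$). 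Monotonicity plus translation properties give $L^\infty$-contraction and propagation of the Lipschitz bound $L$ on $v_h(\cdot,t_n;\zeta,\mcl P)$, uniformly in $n$ and $h$; this is where the constant $C$ depending only on $L$ enters.

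Next I would insert the \emph{exact} solution into the scheme to measure the consistency error on one step. Since $\zeta$ is piecewise linear, on $(t_n,t_{n+1}]$ the equation \eqref{E:simpleeqintrofirstorder} with path $\zeta$ has smooth-in-time driving signal $\dot\zeta$ constant equal to $(\zeta(t_{n+1})-\zeta(t_n))/(t_{n+1}-t_n)$, so on this interval $v$ solves a classical (time-homogeneous over the subinterval) Hamilton-Jacobi equation and one can use standard viscosity-solution one-step error bounds à la Crandall–Lions \cite{CL}. The truncation error of \eqref{E:introLFfirstorder} against this exact solution splits into a spatial part — the finite-difference error $\frac{\theta}{2}(u(x+h)+u(x-h)-2u(x))$ acting like artificial viscosity $\sim \frac{\theta h^2}{2(t-s)}\,u_{xx}$, which by the Crandall–Lions machinery contributes an error $O(h)$ per unit length travelled in time, hence $O(\sqrt{N}h)$ after summing $N$ steps when optimized via the square-root in the $\sqrt{(t_{n+1}-t_n)}$-type bounds — and a temporal part coming from replacing the increment of $\zeta$ over $(t_n,t_{n+1}]$ by its linear interpolant, which, because $\zeta$ is already piecewise linear over $\mcl P$, is \emph{exact} within the scheme, so the only residual time error is the $O((t_{n+1}-t_n)^2)$ term produced by the doubling-of-variables comparison.

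Then I would run the doubling of variables in time only: comparing $v_h$ and $v$ at times $s$ and $t$ with penalization $\frac{|s-t|^2}{2\epsilon}$ and test-function slope $C$ in the $\zeta$-variable, the sup-convolution-type argument yields
\[
	\sup_{(x,t)} \abs{v_h(x,t;\zeta,\mcl P)-v(x,t)} \le \sum_{n=0}^{N-1}\frac{(t_{n+1}-t_n)^2}{\epsilon} + (\text{one-step consistency error summed}) + \max_{s,t\in[0,T]}\Big\{C\abs{\zeta(s)-\zeta(t)}-\frac{|s-t|^2}{2\epsilon}\Big\},
\]
where the middle term is exactly the $C\sqrt{N}h$ from the spatial discretization. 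The point of carrying the path only through $C\abs{\zeta(s)-\zeta(t)}$ rather than through $\nor{\dot\zeta}{\oo}$ is that in the penalization step one bounds the time-increment of $v$ along the characteristics by the oscillation of $\zeta$ (using $\nor{H'}{\oo}$ and the Lipschitz bound $L$), never differentiating $\zeta$.

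The main obstacle I expect is the spatial consistency/stability estimate producing the sharp $\sqrt{N}h$ rate rather than a cruder $Nh$: one must exploit that the artificial viscosity coefficient is $\theta h^2/(2(t_{n+1}-t_n))$ and balance it against the length of each subinterval, i.e. control $\sum_n \sqrt{(t_{n+1}-t_n)}\cdot \frac{h}{\sqrt{t_{n+1}-t_n}}$-type sums by $\sqrt{N}h$ via Cauchy–Schwarz, while simultaneously keeping the Lipschitz constant of $v_h$ from degrading across steps — the monotonicity from the CFL-type hypothesis on $\abs{\zeta(t_{n+1})-\zeta(t_n)}$ is precisely what makes this propagation work. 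A secondary technical point is justifying the doubling-of-variables comparison at the level of the pathwise viscosity solution $v$ when $\zeta$ is merely continuous and piecewise linear; here one reduces to the classical theory on each subinterval where $\dot\zeta$ is constant and patches the estimates together.
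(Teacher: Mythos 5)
You have the right peripheral ingredients (monotonicity under the CFL condition, propagation of the Lipschitz bound, a time-doubling with penalization $|s-t|^2/(2\epsilon)$, and the principle that the path should enter only through its increments), but the core of the argument is missing, and the mechanism you give for the $C\sqrt{N}h$ term is wrong. A ``one-step consistency error plus stability'' accounting against the exact solution cannot be run, because $v$ is only Lipschitz; the paper's proof (Theorem \ref{T:pathwise}) is a genuine doubling of variables in \emph{both} space and time, in which the spatial penalization is not $|x-y|^2/(2\delta)$ but the path-dependent function $\Phi_\delta(x-y,s,t;\zeta)$ of \eqref{E:distancefunction}, an exact $C^{1,1}$-in-space solution of the Hamilton--Jacobi part near the diagonal in time. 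This object is indispensable on both sides of the comparison: on the $v$ side it is the admissible test function required by Definition \ref{D:nonsmoothH}, and on the $v_h$ side it is the smooth function to which the quantitative consistency \eqref{A:quantconsistency} is applied after hitting the doubled inequality with the monotone operator $S_h$. Your proposal never introduces it, and ``doubling in time only'' leaves you with no smooth spatial object against which to measure either the scheme or the equation.

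Second, the $\sqrt{N}h$ term does not come from a Cauchy--Schwarz estimate of sums of the form $\sum_n \sqrt{\Delta t_n}\cdot h/\sqrt{\Delta t_n}$ --- that sum equals $Nh$, not $\sqrt{N}h$. In the paper the per-step consistency error against $\Phi_\delta$ is $C\nor{D^2\Phi_\delta}{\oo}h^2 \le Ch^2/\delta$, independent of the step length, so $N$ steps accumulate $CNh^2/\delta$, and this is balanced against the $C\delta$ overhead of the penalization, giving $\delta \sim \sqrt{N}h$; the same balance underlies the time-regularity estimate \eqref{E:vhtimereg} of Lemma \ref{L:solutionbounds}, which is what actually produces $C\sqrt{N}h$ when the maximum of the doubled functional sits on the boundary $\{\hat t = 0\}$. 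Relatedly, you assert but do not derive the term $\epsilon^{-1}\sum_n(\Delta t_n)^2$; in the paper it arises from an auxiliary nondecreasing piecewise-constant function $\alpha$ with jumps $(\Delta t_n)^2$, inserted into the doubled functional precisely to absorb the mismatch between the interior maximum point $\hat s$ and the grid point $t_{\hat n}$ at which the scheme inequality can be invoked (together with a companion function $\mu$ with jumps $\oline{C}h^2$ that absorbs the consistency error). Without these devices the three terms on the right-hand side cannot be produced, so the proposal as written does not constitute a proof.
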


The rates of convergence are then established by choosing families of paths $\{W_h\}_{h > 0}$ and partitions $\{ \mcl P_h\}_{h > 0}$ in order to optimize the estimate from Theorem \ref{T:intropathwise}. 

We do so first for an arbitrary, fixed continuous path $W$ with modulus of continuity $\omega: [0,\oo) \to [0,\oo)$. For $h > 0$, define $\rho_h$ implicitly by
\begin{equation} \label{A:ctspathCFL}
	\lambda := \frac{ (\rho_h)^{1/2} \omega( (\rho_h)^{1/2}) }{h} < \frac{\theta}{\nor{H'}{\oo}},
\end{equation}
and let the partition $\mcl P_h$ and path $W_h$ satisfy
\begin{equation} \label{A:introregularpathspartition} 
	\left\{
	\begin{split}
		&\mcl P_h := \{ n \rho_h \wedge T \}_{n \in \NN_0}, \; M_h := \lfloor (\rho_h)^{-1/2} \rfloor,\\[1.2mm]
		&\text{and, for } k \in \NN_0 \text{ and } t \in [kM_h\rho_h, (k+1)M_h\rho_h), \\[1.2mm]
		&W_h(t) := W(kM_h\rho_h) + \pars{ \frac{ W((k+1)M_h\rho_h) - W(kM_h\rho_h)}{M_h\rho_h} } \pars{ t - kM_h\rho_h}.
	\end{split}
	\right.
\end{equation}
	
\begin{theorem}\label{T:introLFresultfirstorder}
	There exists $C > 0$ depending only on $L$ such that, if $u_h$ is constructed using \eqref{E:introschemedef} and \eqref{E:introLFfirstorder} with $\mcl P_h$ and $W_h$ as in \eqref{A:ctspathCFL} and \eqref{A:introregularpathspartition}, and $u$ is the pathwise viscosity solution of \eqref{E:simpleeqintrofirstorder}, then
	\[
		\sup_{(x,t) \in \RR^d \times [0,T]} \abs{ u_h(x,t) - u(x,t)} \le C(1+T) \omega( (\rho_h)^{1/2}).
	\]
\end{theorem}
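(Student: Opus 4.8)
The plan is to combine the pathwise estimate of Theorem~\ref{T:intropathwise}, applied with $\zeta = W_h$ and $\mcl P = \mcl P_h$, with the continuous dependence of pathwise viscosity solutions of \eqref{E:simpleeqintrofirstorder} on the driving path. Writing $u^{W_h}$ for the solution driven by $W_h$, one has
\[
	\nor{u_h - u}{\oo} \le \nor{v_h(\cdot\,; W_h, \mcl P_h) - u^{W_h}}{\oo} + \nor{u^{W_h} - u}{\oo}.
\]
To invoke Theorem~\ref{T:intropathwise} for the first term one checks its hypothesis. Since the interpolation cells $[kM_h\rho_h,(k+1)M_h\rho_h]$ are unions of $M_h$ consecutive intervals of $\mcl P_h$, the path $W_h$ is piecewise linear over $\mcl P_h$, affine on each cell with slope of modulus at most $\omega(M_h\rho_h)/(M_h\rho_h)$; hence $\abs{W_h(t_{n+1}) - W_h(t_n)} \le \omega(M_h\rho_h)/M_h$. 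As $M_h\rho_h \le \rho_h^{1/2}$ and $\lfloor\rho_h^{-1/2}\rfloor\rho_h^{1/2} \to 1$ as $h \to 0^+$, this is at most $(1+o(1))\,\rho_h^{1/2}\omega(\rho_h^{1/2}) = (1+o(1))\lambda h$, which is $\le \frac{\theta}{\nor{H'}{\oo}}h$ for small $h$ by the \emph{strict} inequality in \eqref{A:ctspathCFL}.

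I would then estimate the three quantities produced by Theorem~\ref{T:intropathwise}. The partition $\mcl P_h$ has $N \le T\rho_h^{-1}+1$ cells of length $\le \rho_h$, so $\frac1\epsilon\sum_n(t_{n+1}-t_n)^2 \le \frac{N\rho_h^2}{\epsilon} \le \frac{2T\rho_h}{\epsilon}$, while $C\sqrt{N}\,h \le C\sqrt{2T\rho_h^{-1}}\,h = C\sqrt{2T}\,\lambda^{-1}\omega(\rho_h^{1/2})$ by the defining relation in \eqref{A:ctspathCFL}. For the last term I would bound the modulus of continuity $\omega_{W_h}$ of $W_h$: piecewise linear interpolation at mesh $M_h\rho_h \le \rho_h^{1/2}$ satisfies $\nor{W_h - W}{\oo} \le \omega(M_h\rho_h) \le \omega(\rho_h^{1/2})$, so that $\omega_{W_h}(r) \le 3\omega(\rho_h^{1/2})$ for $0\le r\le\rho_h^{1/2}$ and $\omega_{W_h}(r) \le 3\omega(r)$ for $r\ge\rho_h^{1/2}$. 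After replacing $\omega$ by its least concave majorant (of the same order, and still vanishing at $0$), concavity gives $\omega(r) \le \rho_h^{-1/2}\omega(\rho_h^{1/2})\,r$ for $r\ge\rho_h^{1/2}$, and completing the square yields
\[
	\max_{s,t\in[0,T]}\left\{ C\abs{W_h(s)-W_h(t)} - \frac{\abs{s-t}^2}{2\epsilon}\right\} \le 3C\,\omega(\rho_h^{1/2}) + \frac{9C^2\epsilon}{2\rho_h}\,\omega(\rho_h^{1/2})^2.
\]

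Taking $\epsilon := \rho_h/\omega(\rho_h^{1/2})$ balances the first and last contributions, and the sum of the three terms becomes $\le C(1+T)\omega(\rho_h^{1/2})$ with $C$ depending only on $L$ and on the fixed ratio $\lambda$. It remains to control $\nor{u^{W_h}-u}{\oo}$ by the continuous dependence of the solution of \eqref{E:simpleeqintrofirstorder} on the path, established earlier in the paper: for a spatially homogeneous Hamiltonian and Lipschitz initial data the solution is Lipschitz in the uniform norm of the signal, whence $\nor{u^{W_h}-u}{\oo} \le C(L)\nor{W_h - W}{\oo} \le C(L)\,\omega(\rho_h^{1/2})$. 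Adding the two bounds gives the claim.

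The crux is the last term of Theorem~\ref{T:intropathwise}, which fixes the two scales: the partition mesh $\rho_h$ must be small enough to tame both $\sum_n(t_{n+1}-t_n)^2$ and the CFL ratio, while the interpolation mesh $M_h\rho_h\simeq\rho_h^{1/2}$ must be precisely large enough that the slope $\simeq\omega(\rho_h^{1/2})/\rho_h^{1/2}$ of $W_h$ stays compatible, through \eqref{A:ctspathCFL}, with the prescribed $h$. Locking $\omega_{W_h}$ to $\omega$ at the scale $\rho_h^{1/2}$ and then optimizing $\epsilon$ is exactly what produces the rate $\omega(\rho_h^{1/2})$; minor technical points are the reduction to a concave $\omega$ and the bookkeeping for the short final cell of $\mcl P_h$ near $T$.
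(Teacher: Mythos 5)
Your proposal is correct and follows essentially the same route as the paper's proof (of the general version, Theorem \ref{T:fixedcontinuouspath}): split $u_h-u$ through the solution driven by $W_h$, verify the CFL condition for $W_h$ over $\mcl P_h$, apply the pathwise estimate with $\epsilon\sim\rho_h/\omega(\rho_h^{1/2})$, and finish with the path-stability Lemma \ref{L:LSestimate}. The only cosmetic difference is in the penalization term, where the paper exploits the piecewise-linear increments $\abs{W_h(t_{n+1})-W_h(t_n)}\le\lambda_0 h$ directly rather than passing through a concave majorant of $\omega$; both yield the same order.
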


When $W$ is a Brownian motion, we study the problem from different points of view, depending on whether the focus is on almost-sure convergence or convergence in distribution.

As a special case of Theorem \ref{T:introLFresultfirstorder}, the approximating paths and partitions may be taken to satisfy \eqref{A:introregularpathspartition} with $\rho_h$ given by
\begin{equation} \label{A:Brownianregularpartition}
	\lambda := \frac{ (\rho_h)^{3/4} \abs{ \log \rho_h}^{1/2}}{h} < \frac{\theta}{\nor{H'}{\oo}}.
\end{equation}
Alternatively, the constructions may be achieved through the use of certain stopping times:
\begin{equation} \label{A:Brownianrandompartition}
	\left\{
	\begin{split}
		&T_0 := 0, \quad T_{k+1} := \inf\left\{ t > T_k : \max_{r,s \in [T_k, t]} |W(r) - W(s)| > \frac{h^{1/3}}{ |\log h|^{2/3} } \right\},\\[1.2mm]
		&W_h(t) := W(T_k) + \frac{W(T_{k+1}) - W(T_k)}{T_{k+1} - T_k}(t - T_k) \quad \text{for } t \in [T_k, T_{k+1}), \\[1.2mm]
		&M_h := \left \lceil \frac{ \nor{H'}{\oo} }{\pars{ h |\log h|}^{2/3}} \right \rceil, \\[1.2mm]
		&\text{and} \; \mcl P_h := \left\{ t_n := T_k + (n - k M_h) \frac{ T_{k+1} - T_k}{M_h} :k M_h \le n < (k+1) M_h, \; k \in \NN_0 \right\}.
	\end{split} 
	\right.
\end{equation}

The various definitions for $\mcl P_h$ and $W_h$ above, while technical, are all made with the same idea in mind, namely, to ensure that the approximation $W_h$ is ``mild'' enough with respect to the partition. In particular, for any consecutive points $t_n$ and $t_{n+1}$ of the partition $\mcl P_h$, and for sufficiently small $h$, the ratio
\[
	\frac{\abs{ W_h(t_{n+1}) - W_h(t_n)}}{h}
\]
should be less than some fixed constant. This is a special case of the kind of Courant-Lewy-Friedrichs (CFL) conditions required for the schemes in this paper, which are discussed in more detail in the following sections.

\begin{theorem} \label{T:introLFpathwiseBM}
	Suppose that $W$ is a Brownian motion, and assume either that $\mcl P_h$ and $W_h$ are as in \eqref{A:introregularpathspartition} with $\rho_h$ defined by \eqref{A:Brownianregularpartition}, or $\mcl P_h$ and $W_h$ are as in \eqref{A:Brownianrandompartition}. If $u_h$ is constructed using \eqref{E:introschemedef} and \eqref{E:introLFfirstorder}, and $u$ is the solution of \eqref{E:simpleeqintrofirstorder}, then there exists a deterministic constant $C > 0$ depending only on $L$ and $\lambda$ such that, with probability one,
	\[
		\limsup_{h \to 0} \sup_{(x,t) \in \RR^d \times [0,T]} \frac{ \abs{ u_h(x,t) - u(x,t)}}{h^{1/3} \abs{ \log h}^{1/3}} \le C(1+T).
	\]
\end{theorem}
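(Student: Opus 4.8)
The plan is to deduce Theorem \ref{T:introLFpathwiseBM} from the pathwise estimate of Theorem \ref{T:intropathwise} by applying it to the (random) path $W_h$ and partition $\mcl P_h$, and then controlling the resulting three error terms using classical pathwise regularity of Brownian motion. First I would note that, by construction, in either of the two scenarios the CFL-type condition $\max_n |W_h(t_{n+1}) - W_h(t_n)| \le (\theta / \nor{H'}{\oo}) h$ is satisfied for $h$ small enough (in the stopping-time case this is immediate from the definition of $T_{k+1}$ together with the fact that on each subinterval $W_h$ is the linear interpolant, so its increments over the finer partition are bounded by the oscillation of $W$ on $[T_k, T_{k+1}]$; in the deterministic-$\rho_h$ case one uses Levy's modulus of continuity to bound $|W((k+1)M_h\rho_h) - W(kM_h\rho_h)|$ by a constant times $(M_h\rho_h)^{1/2}|\log(M_h\rho_h)|^{1/2}$ and checks this is $\le \lambda h$ for small $h$, a.s.). Thus Theorem \ref{T:intropathwise} applies with $v = v_h(\cdot; W_h, \mcl P_h)$ replaced by the true solution $u$ driven by $W_h$, giving a bound on $\sup|u_h - u^{W_h}|$; a separate, standard stability estimate for pathwise viscosity solutions of \eqref{E:simpleeqintrofirstorder} with respect to the sup-norm distance of the driving paths controls $\sup|u^{W_h} - u|$ by a modulus of $\nor{W_h - W}{\oo}$.

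Next I would estimate each of the three terms in Theorem \ref{T:intropathwise} with the choices at hand. Write $N = N_h$ for the number of partition points and recall $\abs{\mcl P_h}$ is of order $\rho_h$ (resp. of order $(T_{k+1}-T_k)/M_h$, itself of order $(h|\log h|)^{2/3} \cdot h^{1/3}|\log h|^{-2/3}\cdot \text{something}$ — more carefully, each macro-block $[T_k,T_{k+1}]$ has length $\asymp h^{2/3}|\log h|^{-4/3}$ by the law of the iterated logarithm applied to exit times, split into $M_h \asymp (h|\log h|)^{-2/3}$ equal pieces, so $\abs{\mcl P_h}\asymp h^{4/3}|\log h|^{-2/3}$). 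The first term $\frac1\epsilon \sum (t_{n+1}-t_n)^2 \le \frac{T}{\epsilon}\abs{\mcl P_h}$; the middle term is $C\sqrt{N}h \le C\sqrt{T/\abs{\mcl P_h}}\, h$; and the last term is bounded by the Legendre-type quantity $\sup_{s,t}\{C|\zeta(s)-\zeta(t)| - |s-t|^2/(2\epsilon)\}$ evaluated at $\zeta = W_h$, which, since $|W_h(s)-W_h(t)| \le \omega_W(|s-t|) + 2\nor{W_h-W}{\oo}$ and $W$ has a.s. modulus $\omega_W(r) \lesssim r^{1/2}|\log r|^{1/2}$, is controlled by $\sup_r\{Cr^{1/2}|\log r|^{1/2} - r^2/(2\epsilon)\} + C\nor{W_h - W}{\oo}$, which for the optimal scaling of $\epsilon$ is of order $\epsilon^{1/3}|\log \epsilon|^{1/2}$ roughly. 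One then optimizes over $\epsilon$: balancing $\abs{\mcl P_h}/\epsilon$ against $\epsilon^{1/3}|\log\epsilon|^{1/2}$ and checking $\sqrt{N}h$ is subdominant. In the regularized case \eqref{A:Brownianregularpartition}, the definition of $\rho_h$ via $(\rho_h)^{3/4}|\log\rho_h|^{1/2} = \lambda h$ forces $\rho_h \asymp h^{4/3}|\log h|^{-2/3}$ and $\omega_W((\rho_h)^{1/2}) \asymp (\rho_h)^{1/4}|\log\rho_h|^{1/2} \asymp h^{1/3}|\log h|^{1/3}$, which is exactly the claimed rate; one checks the choice $\epsilon \asymp (\rho_h)^{1/2}\omega_W((\rho_h)^{1/2})/\ldots$ makes all terms of this order. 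In the stopping-time case the analogous bookkeeping, using $\nor{W_h - W}{\oo} \le \sup_k \text{osc}_{[T_k,T_{k+1}]}W \le h^{1/3}|\log h|^{-2/3}$ (by definition of the $T_k$) and the LIL-based control of $M_h\rho_h$-type quantities, again yields $h^{1/3}|\log h|^{1/3}$.

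The main obstacle I expect is twofold. The more serious issue is establishing, almost surely and with a \emph{deterministic} constant, the precise quantitative modulus-of-continuity bounds for Brownian motion that make the CFL condition hold for all small $h$ and that give the right size for $\nor{W_h-W}{\oo}$, $\abs{\mcl P_h}$, $N_h$, and the Legendre term — in particular, in the stopping-time construction \eqref{A:Brownianrandompartition} one must control the number of macro-blocks $K_h$ (equivalently a lower bound on $T_{k+1}-T_k$ uniform in $k$) via a Borel-Cantelli argument over the exit times of Brownian motion from intervals of shrinking width $h^{1/3}|\log h|^{-2/3}$, and then $N_h = K_h M_h$; getting the $\limsup$ bound with a constant depending only on $L$ and $\lambda$ requires care that the exceptional null sets (one for each dyadic scale of $h$) can be aggregated. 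The second, more routine obstacle is the pathwise stability estimate $\sup|u^{W_h} - u| \lesssim$ modulus$(\nor{W_h - W}{\oo})$ for \eqref{E:simpleeqintrofirstorder} with the right modulus — this should follow from the theory of pathwise viscosity solutions (and is presumably an ingredient developed elsewhere in the paper), but one must confirm the modulus it provides is no worse than $\nor{W_h-W}{\oo}^{1/2}$ or so, which is $o(h^{1/3}|\log h|^{1/3})$ under the stated constructions and hence harmless. Once these pathwise estimates are in hand, combining them with Theorem \ref{T:intropathwise} and taking $h\to0$ along an arbitrary sequence gives the stated $\limsup$.
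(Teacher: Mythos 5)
Your overall strategy coincides with the paper's (Theorems \ref{T:regularBrownian} and \ref{T:randompart}): verify the CFL condition almost surely via L\'evy's modulus of continuity in the regularized case, resp.\ directly from the definition of the stopping times; apply the pathwise estimate of Theorem \ref{T:intropathwise} to the pair $(W_h,\mcl P_h)$; control the three error terms by Borel--Cantelli arguments along geometric sequences of $h$; and close with a path-stability estimate for the difference between the solutions driven by $W_h$ and by $W$. However, two of your quantitative claims are wrong in ways that matter. The more serious one concerns the stability step: you allow for a modulus as weak as $\nor{W_h - W}{\oo}^{1/2}$ and call it ``harmless.'' It is not. In both constructions $\nor{W_h - W}{\oo}$ is itself of order $h^{1/3}$ up to logarithms (it is comparable to the oscillation of $W$ over a macro-block, namely $\omega(M_h\rho_h)$ resp.\ $\eta_h = h^{1/3}\abs{\log h}^{-2/3}$), so its square root is of order $h^{1/6}$, which dominates the claimed rate $h^{1/3}\abs{\log h}^{1/3}$. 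The argument genuinely requires the Lipschitz-in-path estimate of Lemma \ref{L:LSestimate}, $\sup \abs{u^{W_h}-u} \le C\nor{W_h - W}{\oo}$, which is available here because $u_0$ is Lipschitz and $H$ is spatially homogeneous; with only a $1/2$-H\"older modulus the stated rate would fail.

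The second issue is in the stopping-time case: bounding $\sum_n(\Delta t_n)^2 \le T\abs{\mcl P_h}$ and then treating every block as having the ``typical'' length $\tau_k \asymp \eta_h^2$ is not justified, since $\abs{\mcl P_h} = \max_k \tau_k/M_h$ and the maximum of the $K_h \asymp T\eta_h^{-2}$ i.i.d.\ exit times exceeds their mean by a factor of order $\log K_h \asymp \abs{\log h}$; this route loses a logarithmic factor in the final rate. The paper instead estimates $\sum_k \tau_k(h)^2$ directly through its moments and a Chebyshev/Borel--Cantelli argument (Lemma \ref{L:epsiloninvterm}), yielding $\sum_n(\Delta t_n)^2 \le C T\eta_h^2/M_h$ almost surely for small $h$, which with $\epsilon_h = h/\abs{\log h}$ gives exactly $h^{1/3}\abs{\log h}^{1/3}$. (A minor further point: the Legendre-type term is of order $\epsilon^{1/3}\abs{\log\epsilon}^{2/3}$, not $\epsilon^{1/3}\abs{\log\epsilon}^{1/2}$; see Lemma \ref{L:penalize}. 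With the above choice of $\epsilon_h$ this still lands on the claimed rate.) Aside from these points your outline matches the paper's proof.
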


The final type of result involves convergence in distribution in the space $BUC(\RR^d \times [0,T])$. Here, the paths $W_h$ are taken to be appropriately scaled simple random walks. More precisely, for some probability space $(\mcl A, \mcl G, \mbb P)$,
\begin{equation} \label{A:simplerandomwalks}
\left\{
\begin{split}
	&\lambda := \frac{(\rho_h)^{3/4}}{h} \le \frac{\theta}{\nor{H'}{\oo}}, \quad M_h := \lfloor (\rho_h)^{-1/2} \rfloor, \quad \mcl P_h := \{t_n\}_{n =0}^N = \left\{ n \rho_h \wedge T \right\}_{n \in \NN_0}, \\[1.2mm]
	&\{\xi_n\}_{n=1}^\oo: \mcl A \to \{-1, 1\} \text{ are independent,} \\[1.2mm]
	&\mbb P(\xi_n = 1) = \mbb P(\xi_n = -1) = \frac{1}{2}, \quad W(0) = 0, \quad\text{and} \\[1.2mm]
	&W_h(t) := W_h(kM_h \rho_h) + \frac{ \xi_k }{\sqrt{M_h \rho_h} }(t - kM_h \rho_h) \quad \text{for } k \in \NN_0, \; t \in [kM_h \rho_h , (k+1)M_h \rho_h).
\end{split}
\right.
\end{equation}

\begin{theorem} \label{T:introLFdistributionBM}
	If $u_h$ is constructed using \eqref{E:introschemedef} and \eqref{E:introLFfirstorder} with $W_h$ and $\mcl P_h$ as in \eqref{A:simplerandomwalks}, and $u$ is the solution of \eqref{E:simpleeqintrofirstorder} with $W$ equal to a Brownian motion, then, as $h \to 0$, $u_h$ converges to $u$ in distribution.
\end{theorem}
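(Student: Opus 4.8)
The plan is to deduce the convergence in law from three ingredients: Donsker's invariance principle for the rescaled random walks $W_h$, the basic stability of pathwise viscosity solutions under uniform perturbation of the driving signal, and the pathwise error bound of Theorem~\ref{T:intropathwise}, whose key feature is that it depends on the driving path only through its modulus of continuity. Because the scheme operator $S_h$ varies with $h$, there is no immediate continuous-mapping argument; I would glue the pieces together on a single probability space via the Skorokhod representation theorem. Throughout, let $\Phi\colon C([0,T];\RR)\to BUC(\RR^d\times[0,T])$ denote the solution map sending $\zeta$ to the pathwise viscosity solution of $du = H(u_x)\circ d\zeta$, $u(\cdot,0)=u_0$; this map is continuous (a basic stability property of the pathwise Lions--Souganidis theory), $\Phi$ restricted to piecewise-affine paths is furnished by the classical viscosity theory, and $u = \Phi(W)$.

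The first thing to record is the behaviour of the data. Writing $\rho_h = (\lambda h)^{4/3}$ one has $\rho_h\to 0$ and $M_h\rho_h\to 0$, and \eqref{A:simplerandomwalks} identifies $W_h$ as the piecewise-affine interpolation, with time step $M_h\rho_h$, of the simple random walk rescaled by $\sqrt{M_h\rho_h}$; hence, by Donsker's theorem, $W_h$ converges in distribution in $C([0,T];\RR)$ (with the uniform norm) to a standard Brownian motion, which we call $W$. For a fixed realization, any two consecutive nodes $t_n, t_{n+1}$ of $\mcl P_h$ lie in one affine piece of $W_h$, on which the slope is $\pm(M_h\rho_h)^{-1/2}$, so $\abs{W_h(t_{n+1})-W_h(t_n)} = \rho_h (M_h\rho_h)^{-1/2} = (1+o(1))\lambda h \le \frac{\theta}{\nor{H'}{\oo}} h$ for $h$ small; thus the hypothesis of Theorem~\ref{T:intropathwise} holds with $\zeta = W_h$ and $\mcl P = \mcl P_h$. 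Setting $v_h := \Phi(W_h)$ and using $\sum_n(t_{n+1}-t_n)^2\le \rho_h T$, $N\le \rho_h^{-1}T+1$ and $h = \rho_h^{3/4}/\lambda$, Theorem~\ref{T:intropathwise} with $\epsilon = \sqrt{\rho_h}$ produces a constant $C = C(L,\lambda)$ such that
\[
	\sup_{\RR\times[0,T]}\abs{u_h - v_h}\;\le\; C(1+T)\,\rho_h^{1/4}\;+\;\sup_{0\le r\le T}\left\{\, C\,\omega_{W_h}(r)\;-\;\frac{r^2}{2\sqrt{\rho_h}}\,\right\},
\]
where $\omega_{W_h}$ denotes the modulus of continuity of $W_h$.

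Next I would pass to a common space. By the Skorokhod representation theorem there are, on one probability space, random variables $\widehat W_h$, each with the law of $W_h$, and a standard Brownian motion $\widehat W$, such that $\widehat W_h\to\widehat W$ uniformly on $[0,T]$ almost surely. Let $\widehat u_h$ be the scheme function \eqref{E:introschemedef} built from $\widehat W_h$ and $\mcl P_h$, and put $\widehat v_h := \Phi(\widehat W_h)$, $\widehat u := \Phi(\widehat W)$; since each of these is a measurable functional of the corresponding path, $\widehat u_h$ has the law of $u_h$ and $\widehat u$ has the law of $u$. On the almost-sure event $\{\widehat W_h\to\widehat W\}$, the set $\{\widehat W_h\}_{h}\cup\{\widehat W\}$ is a convergent sequence together with its limit, hence relatively compact in $C([0,T];\RR)$, hence equicontinuous, so it admits a common modulus of continuity $\omega$. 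For any modulus $\omega$ one checks elementarily that $\sup_{0\le r\le T}\{C\omega(r) - r^2/(2\epsilon)\}\to 0$ as $\epsilon\to 0^+$ (split the supremum at a radius $\delta$ where $C\omega(\delta)$ is small; the part $r>\delta$ is negative once $\epsilon$ is small). Applying this with $\epsilon = \sqrt{\rho_h}\to 0$ in the estimate above (valid for $\widehat W_h$, which has the law of $W_h$) yields $\sup_{\RR\times[0,T]}\abs{\widehat u_h - \widehat v_h}\to 0$ almost surely, while continuity of $\Phi$ together with $\widehat W_h\to\widehat W$ gives $\sup_{\RR\times[0,T]}\abs{\widehat v_h - \widehat u}\to 0$ almost surely. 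Hence $\widehat u_h\to\widehat u$ almost surely in $BUC(\RR^d\times[0,T])$, which is exactly the claimed convergence of $u_h$ to $u$ in distribution.

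I expect the genuine obstacle to be controlling the scheme error uniformly along the random-walk sequence: unlike in the quantitative theorems, the driving paths here are random and converge only in law, so the argument must lean on the fact that Theorem~\ref{T:intropathwise} sees the path solely through its modulus of continuity, combined with the observation that, on the Skorokhod space, all the $\widehat W_h$ and their limit share one modulus. Two minor points must also be dispatched: the verification (carried out above) that the specific construction \eqref{A:simplerandomwalks} automatically satisfies the CFL-type restriction of Theorem~\ref{T:intropathwise}; and the fact that $u_h$ carries jumps of size $O(h)$ in the time variable at the nodes of $\mcl P_h$, so that the statement ``$u_h\in BUC(\RR^d\times[0,T])$'' is to be read after the obvious, asymptotically negligible, modification (for instance replacing $u_h$ by its piecewise-affine-in-$t$ interpolation through the nodes).
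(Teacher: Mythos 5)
Your proof is correct, and it reaches the conclusion by a route that differs from the paper's (Theorem \ref{T:law}) in one substantive respect, although the basic decomposition $u_h-u=(u_h-\Phi(W_h))+(\Phi(W_h)-\Phi(W))$ is the same. The paper exploits the fact that the scaled random walk has a \emph{deterministic} modulus: every increment over a partition step equals $\sqrt{\rho_h/M_h}\approx\lambda_0 h$ and $W_h$ is surely Lipschitz with constant $(M_h\rho_h)^{-1/2}$, so the penalization term in Theorem \ref{T:intropathwise} is bounded realization-independently, yielding the sure estimate $\sup\abs{u_h-\Phi(W_h)}\le C(1+T)h^{1/3}$ (this is \eqref{E:inlawestimate}, obtained from the computation behind \eqref{E:fixedtimestep}). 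The conclusion then follows directly from Donsker's theorem, the uniform continuity of the solution map (Lemma \ref{L:LSestimate}), the continuous mapping theorem, and Slutsky — no coupling is required. You instead treat the modulus-of-continuity term as a random quantity and dispose of it via the Skorokhod representation together with the equicontinuity of the coupled family $\{\widehat W_h\}\cup\{\widehat W\}$; this is sound (the CFL verification, the elementary fact that $\sup_r\{C\omega(r)-r^2/(2\epsilon)\}\to0$ for any fixed modulus, and the passage from almost-sure convergence on the representation space back to convergence in law are all correct, applied along arbitrary sequences $h_j\to0$), and it is more robust in that it would apply to approximating paths without a deterministic modulus, but it forgoes the explicit $h^{1/3}$ rate for the scheme error and imports machinery the problem does not need. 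Your two closing caveats — the near-saturation of the CFL bound caused by the floor in $M_h$, and the $O(h)$ time-discontinuities of $u_h$ at partition points — are genuine but minor, and are glossed over in the paper as well.
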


\subsection{Background for the study of \eqref{E:eq}}

When $W$ is continuously differentiable, or of bounded variation, the symbol $dW^i$ in equation \eqref{E:eq} stands for the time derivative $\frac{d}{dt} W^i(t) = \dot W^i(t)$ and ``$\circ$'' denotes multiplication. As already noted, the classical viscosity theory applies in this context. 

The problem becomes more complicated when $W$ is merely continuous, and therefore, possibly nowhere differentiable or of infinite variation. In many examples of interest, $W$ is the sample path of a stochastic process, such as Brownian motion, and then the symbol ``$\circ$'' is regarded as the Stratonovich differential. More generally, $W$ may be a geometric rough path, a specific instance being a Brownian motion enhanced with its Stratonovich iterated integrals. 

The notion of pathwise viscosity solutions for equations like \eqref{E:eq} was developed by Lions and Souganidis, first for Hamiltonians depending smoothly on the gradient $Du$ \cite{LSfirst}, and later for nonsmooth Hamiltonians \cite{LSnonsmooth}. The comparison principle was proved in \cite{LSsecondorder}, and equations with Hamiltonians depending nonlinearly on $u$ were considered in \cite{LSsemilinear}. The theory has since been extended to treat Hamiltonians with spatial dependence, as by Friz, Gassiat, Lions, and Souganidis \cite{FGLS}, or by the author \cite{Se}; these papers use techniques developed by Lions and Souganidis for more general settings that appear in forthcoming works \cite{LSbook}. An alternative existence result relying on Perron's method can be found in the work of the author \cite{Seperron}. Many more details and results are summarized in the notes of Souganidis \cite{Snotes}. 

The setting in which $H$ depends linearly on the gradient has been explored from the point of view of rough path theory by many authors, including, but not limited to, Caruana, Friz, and Oberhauser \cite{CFO} and Gubinelli, Tindel, and Torrecilla \cite{GTT}. The semilinear problem was also studied by Buckdahn and Ma \cite{BM1, BM2} using the pathwise control interpretation. 

It is of particular interest to have a way to analyze \eqref{E:eq} when $H$ is nonlinear and not necessarily $C^1$, because of the application, via the level set method, to the theory of the propagation of fronts with a stochastically perturbed normal velocity. For example, if, for $t > 0$, $\Gamma_t \subset \RR^d$ is a smooth, $(d-1)$-dimensional surface moving with normal velocity
\begin{equation}\label{E:stochmeancurvature}
	V = - \kappa + \alpha\; dW,
\end{equation}
where $\kappa$ is the mean curvature of the surface, $\alpha \in \RR$ is a constant, and $dW$ is white noise in time, and if $\Gamma_t$ is the $0$-level set of some function $u(\cdot,t)$, that is, $\Gamma_t = \{x \in \RR^d : u(x,t) = 0\}$, then, formally, $u$ solves the equation
\begin{equation} \label{E:meancurvature}
	du = \pars{ \Delta u - \left\langle D^2 u \frac{Du}{\abs{ Du}}, \frac{Du}{\abs{ Du}} \right\rangle } \;dt + \alpha |Du| \circ dW \quad \text{in } \RR^d \times (0,T].
\end{equation}
This is a special case of \eqref{E:eq} for which $F$ is singular. The stochastic viscosity interpretation of \eqref{E:meancurvature} has been used by Souganidis and Yip \cite{SY} to exhibit stochastic selection principles for some examples of nonuniqueness in mean curvature flow, and by Lions and Souganidis \cite{LSbook} to establish a sharp interface limit for the Allen-Cahn equation perturbed with an additive, mild approximation of time-white noise. For the latter problem, it was proved that, for some $\alpha \in \RR$, the limiting front has a normal velocity as in \eqref{E:stochmeancurvature}.

As far as we know, the results in this paper on approximation schemes for stochastic viscosity solutions are the first of their kind. We are also aware of a work by Hoel, Karlsen, Risebro, and Storr{\o}sten \cite{HKRS} using numerical methods to study a related class of equations, namely, stochastic scalar conservation laws.

\subsection{Organization of the paper}

Section \ref{S:discussion} begins with a discussion of the theory of monotone approximation schemes in the classical viscosity setting, as well as the difficulties faced for pathwise equations. In Section \ref{S:definition}, we recall some definitions and results from the theory of pathwise (stochastic) viscosity solutions. Some of the material may be found in \cite{LSfirst}, \cite{LSnonsmooth}, or \cite{Snotes}, while other facts, whose proofs are given here, are developed by Lions and Souganidis in a forthcoming work \cite{LSbook}.

In Section \ref{S:scheme}, we make the notion of the scheme operator $S_h$ more precise, and use the method of half-relaxed limits to prove that, for an appropriate family of partitions $\{ \mcl P_h\}_{h > 0}$ and paths $\{W_h\}_{h > 0}$ as in \eqref{A:introapproximators}, if $u_h$ is defined by \eqref{E:introapproxsolution} and \eqref{E:introschemedef}, then $u_h$ converges locally uniformly to the solution of \eqref{E:eq}. Various examples are presented to which the general convergence result may be applied. 

Section \ref{S:pathwise} lays the framework for the quantitative analysis of schemes for stochastic Hamilton-Jacobi equations by proving a generalization of the pathwise estimate in Theorem \ref{T:intropathwise}. This result is then used in Section \ref{S:regularrates} to obtain explicit rates of convergence, such as those stated in Theorems \ref{T:introLFresultfirstorder} and \ref{T:introLFpathwiseBM}, as well as the result on convergence in distribution as in Theorem \ref{T:introLFdistributionBM}.

\subsection{Notation}

Throughout most of the proofs in this paper, the symbol $C$ will stand for a generic constant that may change from line to line, and whose dependence will be specified or made clear from context. $C^{0,1}(\RR^d)$ is the space of Lipschitz continuous functions, and $\nor{Du}{\oo}$ is the Lipschitz constant for a function $u \in C^{0,1}(\RR^d)$. For $\alpha \in (0,1)$, $C^\alpha([0,T])$ denotes the space of $\alpha$-H\"older continuous paths on $[0,T]$, and $[W]_{\alpha,T}$ is defined to be the H\"older seminorm of $W \in C^\alpha([0,T])$. Given a continuous path $W$ and $s,t \in [0,T]$, the maximum oscillation of $W$ between $s$ and $t$ is denoted by
\begin{gather*}
	\mathrm{osc}(W,s,t) := \max_{r_1,r_2 \in I_{s,t}} \abs{ W(r_1) - W(r_2)} = \max_{I_{s,t}} W - \min_{I_{s,t}} W, \\
	\text{where} \quad I_{s,t} := \left[ \min(s,t), \max(s,t)\right].
\end{gather*}
The spaces of upper- and lower-semicontinuous functions on $\RR^d \times [0,T]$ are respectively $USC(\RR^d \times [0,T])$ and $LSC(\RR^d \times [0,T])$, and $(B)UC(U)$ is the space of (bounded) uniformly continuous functions on a domain $U$.

For $a \in \RR$, $\lfloor a \rfloor$ and $\lceil a \rceil$ denote respectively the largest (smallest) integer $k$ satisfying $k \le a$ ($k \ge a$). The mesh-size of a partition $\mcl P = \{ 0 = t_0 < t_1 < t_2 < \cdots < t_N = T\}$ of $[0,T]$ is defined by $\abs{\mcl P} := \max_{n = 0, 1, 2, \ldots, N-1} (t_{n+1} - t_n)$. $S^d$ is the space of symmetric $d$-by-$d$ matrices, and, for $X,Y \in S^d$, the inequality $X \le Y$ means $\xi \cdot X\xi \le \xi \cdot Y \xi$ for all $\xi \in \RR^d$. The set of positive integers is written as $\NN$, and $\NN_0 := \NN \cup \{0\}$.

\section{Monotone schemes for viscosity solutions} \label{S:discussion}

\subsection{The classical viscosity setting}
It is well-known that viscosity solutions of the nonlinear degenerate parabolic equation
\begin{equation} \label{E:deteq}
	u_t = F(D^2 u, Du ) \quad \text{in } \RR^d \times (0,T] \quad \text{and} \quad u(\cdot, 0) = u_0 \quad \text{in } \RR^d 
\end{equation}
satisfy a comparison principle. That is, if $u$ and $v$ are respectively a sub- and super-solution of \eqref{E:deteq}, then, for all $t \in [0,T]$,
\begin{equation}
	\sup_{ x \in \RR^d} \pars{ u(x,t) - v(x,t)} \le \sup_{x \in \RR^d} \pars{ u(x,0) - v(x,0)}. \label{E:introcomparison}
\end{equation}
In particular, if $u(\cdot,0) \le v(\cdot,0)$, then $u(\cdot,t) \le v(\cdot,t)$ for all future times $t > 0$. 

Moreover, \eqref{E:deteq} is stable under local uniform convergence. That is, if, for $n \ge 0$, $u_{0,n}, u_0 \in BUC(\RR^d)$, $F_n, F \in C(\RR^d)$, $u_n \in BUC(\RR^d \times [0,T])$ solves
\begin{equation}
	u_{n,t} = F_n(D^2u_n, Du_n) \quad \text{in } \RR^d \times (0,T] \quad \text{and} \quad u_n(\cdot,0) = u_{0,n} \quad \text{in } \RR^d, \label{E:nequation}
\end{equation}
and, as $n \to \oo$,
\begin{equation}
	u_{0,n} \to u_0 \quad \text{and} \quad F_n \to F \quad \text{locally uniformly}, \label{E:initialconv}
\end{equation}
then, as $n \to \oo$, $u_n$ converges locally uniformly to $u$, the viscosity solution of \eqref{E:deteq}.

These and other properties can be summarized in terms of the solution operators for \eqref{E:deteq}, which are, for $t \ge 0$, the maps $S(t): BUC(\RR^d) \to BUC(\RR^d)$ for which the solution $u$ of \eqref{E:deteq} is given by $u(x,t) = S(t)u_0(x)$. For all $s,t \ge 0$, $\phi, \psi \in BUC(\RR^d)$, and $k \in \RR$, these satisfy
\begin{equation}\label{E:solutionoperatorprops}
	\left\{
	\begin{split}
		(a) & \quad S(0)\phi = \phi,\\[1.2mm]
		(b) & \quad S(t+s) = S(t)S(s), \\[1.2mm]
		(c) & \quad S(t)(\phi + k) = S(t)\phi + k, \quad \text{and}\\[1.2mm]
		(d) & \quad \sup_{\RR^d} \pars{ S(t)\phi - S(t)\psi} \le \sup_{\RR^d} \pars{ \phi - \psi}.
	\end{split} 
	\right.
\end{equation}
Property \eqref{E:solutionoperatorprops}(c) implies that \eqref{E:solutionoperatorprops}(d) is equivalent to the monotonicity of $S(t)$. That is, if $\phi \le \psi$, then $S(t)\phi \le S(t)\psi$ for all $t \ge 0$. 

The stability property above can be rephrased as saying that, if \eqref{E:initialconv} holds and if $S_n(t): BUC(\RR^d) \to BUC(\RR^d)$ is the family of solution operators corresponding to \eqref{E:nequation}, then, as $n \to \oo$,  $S_n(t)u_{0,n}(x) \to S(t)u_0(x)$ locally uniformly. The philosophy behind the creation of approximation schemes is to generalize this result, by constructing, for $h > 0$ and $\rho > 0$, suitable operators $S_h(\rho): BUC(\RR^d) \to BUC(\RR^d)$ that satisfy properties similar to those in \eqref{E:solutionoperatorprops}. In particular, for all $\phi \in BUC(\RR^d)$ and $k \in \RR$, and for some increasing function $h \mapsto \rho_h$ satisfying $\lim_{h \to 0} \rho_h = 0$,
\begin{equation}\label{E:schemeoperatorprops}
	\left\{
	\begin{split}
		(a) & \quad S_h(t)(\phi + k) = S_h(t)\phi + k \quad \text{for all } h,t > 0, \\[1.2mm]
		(b) & \quad \sup_{\RR^d} \pars{ S_h(\rho)\phi - S_h(\rho)\psi} \le \sup_{\RR^d} \pars{ \phi - \psi} \quad \text{whenever } h > 0 \text{ and } 0 < \rho \le \rho_h, \text{ and} \\[1.2mm]
		(c) & \quad \lim_{h \to 0} \sup_{0 < \rho \le \rho_h} \abs{  \frac{ S_h(\rho)\phi - \phi}{\rho} - F(D^2 \phi, D\phi)} = 0 \quad \text{for all $\phi \in C^2(\RR^d)$.}
	\end{split} 
	\right.
\end{equation}
Given a partition $\mcl P_h$ satisfying $\abs{ \mcl P_h} \le \rho_h$, the approximate solution $u_h: BUC(\RR^d \times [0,T])$ is assembled by first setting $u_h(\cdot,0) := u_0$ and then iteratively defining
\begin{equation} \label{E:classicalapproxsolution}
	u_h(\cdot,t) := S_h(t - t_n)u_h(\cdot,t_n) \quad \text{for } n = 0, 1, 2, \ldots, N-1 \text{ and } t \in (t_n, t_{n+1}].
\end{equation}

One example of particular interest is the class of finite difference approximations, for which $S_h(\rho)u$ depends on the function $u$ only through its values on the discrete lattice $h\ZZ^d$. A major consideration for such schemes is to establish a relationship between the resolutions of the discrete grids in time and space, that is, to choose the map $h \mapsto \rho_h$ in such a way that the properties in \eqref{E:schemeoperatorprops} can be attained. Such a relationship is known as a Courant-Friedrichs-Lewy (CFL) condition \cite{CFL}, and various examples will be studied throughout the paper.

As is well-known, solutions of \eqref{E:eq} are generally not $C^2$ on all of $\RR^d \times [0,T]$, even if $F$, $H$, and $u_0$ are all smooth, and so
\eqref{E:schemeoperatorprops}(c) alone is not enough to prove the convergence of $u_h$ to $u$ as $h \to 0$. It is here that the monotonicity of $S_h(\rho)$, which is implied by \eqref{E:schemeoperatorprops}(a) and (b), is vital, since it allows the scheme operator to be applied to the smooth test functions coming from the definition of viscosity solutions.

A finite difference scheme operator $S_h$, in its simplest form, when $d = 1$ (the last assumption here made only to simplify the presentation), is given, for some $F_h \in C^{0,1}(\RR \times \RR \times \RR)$, by
\begin{equation} \label{E:introexplicitscheme}
	S_h(\rho)u(x) := u(x) + \rho F_h \pars{ \frac{u(x + h) + u(x - h) - 2u(x)}{h^2} ,\frac{u(x+ h) - u(x)}{h}, \frac{u(x) - u(x - h)}{h}}.
\end{equation}
The scheme \eqref{E:introexplicitscheme} automatically satisfies \eqref{E:schemeoperatorprops}(a), while \eqref{E:schemeoperatorprops}(b) holds if the function
\[
	(u, u_-, u_+) \mapsto u + \rho F_h \pars{ \frac{u_+ + u_- - 2u}{h^2} ,\frac{u_+ - u}{h}, \frac{u - u_-}{h}}
\]
is nondecreasing in each argument when $0 < \rho \le \rho_h$, which, in turn, calls for
\begin{equation}
	\rho_h := \lambda h^2 \label{E:introsecondorderCFL}
\end{equation}
for some sufficiently small constant $\lambda > 0$. In the case of first-order equations, that is, for the equation
\begin{equation}
	u_t = H(Du) \quad \text{in } \RR^d \times (0,T] \quad \text{and} \quad u(\cdot, 0) = u_0 \quad \text{on } \RR^d, \label{E:firstorderdeteq}
\end{equation}
the CFL condition becomes
\begin{equation}
	\rho_h = \lambda h. \label{E:introfirstorderCFL}
\end{equation}

The function $F_h$ is related to $F$ through a consistency requirement, which here means that, for all $X \in \RR$ and $p \in \RR$,
\begin{equation}
	\lim_{h \to 0} F_h(X,p,p) = F(X,p) \quad \text{and} \quad \sup_{h > 0} \nor{DF_h}{\oo} < \oo. \label{E:intronHconsistency}
\end{equation}
Property \eqref{E:schemeoperatorprops}(c) can then be readily verified by using Taylor approximations to estimate the finite differences of functions $\phi \in C^2(\RR^d)$. 

An instructive example in the first-order setting is the following analogue of the Lax-Friedrichs scheme for scalar conservation laws. Let $\epsilon_h > 0$ and define, for $x \in \RR$,
\begin{equation} \label{E:introFLscheme}
	S_h(\rho)u(x) := u(x) + \rho \left\{ H \pars{ \frac{u(x+h) - u(x-h)}{2h}} + \epsilon_h \pars{ \frac{u(x+h) + u(x-h) - 2u(x)}{h^2}} \right\}.
\end{equation}
Here, $H_h$ is given by
\[
	H_h(p,q) = H\pars{ \frac{p+q}{2}} + \frac{\epsilon_h}{h} (p-q).
\]
The final term in \eqref{E:introFLscheme} is a discrete analogue of the method of vanishing viscosity, and is used here to inject monotonicity into the scheme. Indeed, if, for some fixed $\theta > 0$ and $\lambda > 0$, the small parameter $\epsilon_h$ is defined by
\begin{equation} \label{E:epsilonrelation}
	\epsilon_h := \frac{\theta h}{2\lambda},
\end{equation}
then \eqref{E:schemeoperatorprops}(b) is satisfied as long as \eqref{E:introfirstorderCFL} holds with $\theta \le 1$ and $\lambda \le \frac{\theta}{\nor{H'}{\oo}}$.

In \cite{CL}, Crandall and Lions found explicit error estimates for this and and other explicit finite difference schemes for homogenous Hamilton-Jacobi equations. More precisely, it was proved for the above example that there exists a constant $C > 0$ depending only on $\nor{DH}{\oo}$, $\nor{Du_0}{\oo}$, and $\lambda$ such that, if $u_h$ is defined as in \eqref{E:classicalapproxsolution} and \eqref{E:introFLscheme}, and if $u$ solves \eqref{E:firstorderdeteq}, then
\[
	\sup_{(x,t) \in \RR^d \times [0,T]} \abs{ u_h(x,t) - u(x,t) } \le C(1+T)h^{1/2}.
\]
This same rate was later established by Souganidis \cite{S} for both explicit and implicit finite difference schemes for equations with Lipschitz spatial and time dependence, and the same method was applied to study other approximations such as max-min representations and Trotter-Kato product formulas \cite{Sproduct}. 

Barles and Souganidis \cite{BS} considered schemes for second order equations, using a shorter, qualitative proof of convergence relying on the method of half-relaxed limits. Kuo and Trudinger \cite{KT1, KT2} also investigated such schemes in great detail and constructed several examples. The question of estimating the rates of convergence for such approximations of second order equations was analyzed from many points of view. Barles and Jakobsen \cite{BJ1, BJ2, BJ3} achieved algebraic convergence rates for stochastic control problems, taking advantage of the fact that $F$ is convex in that setting. Jakobsen \cite{J1, J2} and Krylov \cite{K} also established rates of convergence for nonconvex problems under some restrictions on $F$. If $F$ is uniformly elliptic, then rates of convergence can be found under very general assumptions using techniques from the regularity theory for fully nonlinear, uniformly elliptic equations, as exhibited by Caffarelli and Souganidis \cite{CS}, and later by Turanova \cite{T} for inhomogenous equations.

\subsection{Difficulties in the pathwise setting}

The lack of regularity for $W$ complicates the task of constructing scheme operators for \eqref{E:eq} that are both monotone and consistent. 

Consider, for example, modifying the Lax-Friedrichs scheme \eqref{E:introFLscheme} for the stochastic Hamilton Jacobi equation
\begin{equation} \label{E:HJeq}
	du = H(u_x) \circ dW \quad \text{in } \RR \times (0,T] \quad \text{and} \quad u(\cdot,0) = u_0 \quad \text{in } \RR.
\end{equation}
If $W$ is sufficiently regular, then it is reasonable to define a time-inhomogenous scheme operator by
\begin{equation}
	\begin{split}
	S_h(t, s)u(x) &:= u(x) + H \pars{ \frac{u(x+h) - u(x-h)}{2h} }(W(t) - W(s))\\[1.2mm]
	& + \epsilon_h \pars{ \frac{ u(x+h) + u(x-h) - 2u(x)}{h^2} }(t-s).
	\end{split} \label{E:pathwiseguess}
\end{equation}
Proceeding as in the previous subsection, a simple calculation reveals that $S_h(t,s)$ is monotone for $0 \le t-s \le \rho_h$, if $\rho_h$ and $\epsilon_h$ are such that, for some $\theta \in (0, 1]$,
\[
	\epsilon_h := \frac{\theta h^2}{2(t-s)}
\] 
and
\begin{equation}\label{E:intropathCFL}
	\lambda := \max_{|t-s| \le \rho_h} \frac{ \mathrm{osc}(W, s,t) }{h} \le \lambda_0 := \frac{\theta}{\nor{H'}{\oo}}.
\end{equation}
On the other hand, spatially smooth solutions $\Phi$ of \eqref{E:HJeq} have the expansion, for any $s,t \in [0,T]$ with $|s-t|$ sufficiently small,
\begin{equation}\label{E:introsmoothexpansion}
	\begin{split}
	\Phi(x,t) = \Phi(x,s) &+ H(\Phi_x(x,s))(W(t)-W(s)) \\[1.2mm]
	&+ H'(\Phi_x(x,s))^2 \Phi_{xx}(x,s) (W(t)-W(s))^2 + O(\abs{ W(t) - W(s)}^3),
	\end{split}
\end{equation}
so that, if $0 \le t-s \le \rho_h$, for some $C > 0$ depending only on $H$,
\begin{equation}\label{E:pathconsistencyconsequence}
	\begin{split}
	\sup_{\RR} \abs{ S_h(t , s)\Phi(\cdot,s) -\Phi(\cdot,t)} &\le C \sup_{r \in [s,t]} \nor{D^2\Phi(\cdot,r)}{\oo} \pars{ \abs{W(t) - W(s)}^2 + h^2} \\
	&\le C \sup_{r \in [s,t]} \nor{D^2\Phi(\cdot,r)}{\oo} (1 + \lambda_0^2)h^2.
	\end{split}
\end{equation}
Therefore, in order for the scheme to have a chance of converging, $\rho_h$ should satisfy
\begin{equation}\label{E:introlimits}
	\lim_{h \to 0} \frac{h^2}{\rho_h} = 0.
\end{equation}
Both \eqref{E:intropathCFL} and \eqref{E:introlimits} can be achieved when $W$ is continuously differentiable, or merely Lipschitz, by setting
\[
	\rho_h := \frac{\lambda h}{ \nor{\dot W}{\oo}}.
\]
More generally, if $W$ has Young-H\"older regularity, that is, $W \in C^\alpha([0,T])$ with $\alpha > \frac{1}{2}$, and if
\begin{equation} \label{E:alphaCFL}
	(\rho_h)^\alpha := \frac{\lambda h}{ [W]_{\alpha,T}},
\end{equation}
then both \eqref{E:intropathCFL} and \eqref{E:introlimits} are satisfied, since 
\[
	\frac{h^2}{\rho_h} = \pars{ \frac{[W]_{\alpha,T} h^{2\alpha-1}}{\lambda} }^{1/\alpha} \xrightarrow{h\to 0} 0.
\]
However, this approach fails as soon as the quadratic variation
\[
	\mcl Q([0,T],W) := \lim_{|\mcl P| \to 0} \sum_{n=0}^{N-1} \abs{ W(t_{n+1}) - W(t_n)}^2
\]
is non-zero, as \eqref{E:intropathCFL} and \eqref{E:introlimits} together imply that $\mcl Q([0,T], W) = 0$. This rules out, for instance, the case where $W$ is the sample path of a Brownian motion, for which $\mcl Q([0,T],W) = T$ with probability one.  

Motivated by the theory of rough differential equations, it is natural to explore whether the scheme operator \eqref{E:pathwiseguess} can be altered in some way to refine the estimate in \eqref{E:pathconsistencyconsequence}, potentially allowing \eqref{E:introlimits} to be relaxed and $\rho_h$ to converge more quickly to zero as $h \to 0^+$. More precisely, the next term in the expansion \eqref{E:introsmoothexpansion} suggests that, for $W \in C^\alpha([0,T], \RR)$ with $\alpha > \frac13$ (or more generally, $W$ with $p$-variation for $p < 3$), one should define
\begin{equation} \label{E:higherorderguess}
	\begin{split}
	S_h(t, s)u(x) &:= u(x) + H \pars{ \frac{u(x+h) - u(x-h)}{2h} }(W(t) - W(s))\\
	& + \frac{1}{2} H'\pars{ \frac{u(x+h) - u(x-h)}{2h}}^2 \pars{ \frac{u(x+h) + u(x-h) - 2u(x) }{h^2}} \pars{ W(t) - W(s)}^2 \\
	& + \frac{\theta}{2} \pars{  u(x+h) + u(x-h) - 2u(x) }.
	\end{split}
\end{equation}
As can easily be checked, \eqref{E:higherorderguess} is monotone as long as \eqref{E:intropathCFL} holds,
\[
	\nor{Du}{\oo} \le L, \quad \theta + \nor{H'}{\oo} \lambda^2 \le 1, \quad \text{and} \quad \lambda \le \frac{\theta}{\nor{H'}{\oo} \pars{ 1 + 2 L \nor{H''}{\oo}}}.
\]
On the other hand, the error in \eqref{E:pathconsistencyconsequence} would then be of order $h^2 + \abs{ W(t) - W(s)}^3$, which again leads to \eqref{E:introlimits}. This seems to indicate that we should also incorporate higher order corrections in \eqref{E:higherorderguess} to improve the order of the error in the $h$ variable. However, this will disrupt the monotonicity of the scheme in general. This is due to the fact that such terms involve the discrete second derivative of $u$, and, thus, will counter the effect of the term
\[
	\frac{\theta}{2} \pars{ u(x+h) + u(x-h) - 2u(x)},
\]
which is included precisely for the purpose of creating monotonicity.

For this reason, we develop a more effective strategy that works for any continuous path. Namely, rather than modifying the scheme itself, we regularize the path $W$. If $\{W_h\}_{h > 0}$ is a family of smooth paths converging uniformly, as $h \to 0$, to $W$, then $\mcl Q(W_h, [0,T]) = 0$ for each fixed $h > 0$, and therefore, $W_h$ and $\rho_h$ can be chosen so that \eqref{E:intropathCFL} and \eqref{E:introlimits} hold for $W_h$ rather than $W$. Various methods for implementing this procedure, both qualitative and quantitative, are explored throughout the paper.

\section{The definition of pathwise viscosity solutions} \label{S:definition}

\subsection{Assumptions on the nonlinearities}

The nonlinear function $F: S^d \times \RR^d \to \RR$ is assumed to be Lipschitz and degenerate elliptic; that is,
\begin{equation} \label{A:F}
	\left\{
	\begin{split}
		&F \in C^{0,1}(S^d \times \RR^d) \quad \text{and}\\[1.2mm]
		&F(X,p) \le F(Y,p) \quad \text{whenever} \quad p \in \RR^d \quad \text{and} \quad X \le Y.
	\end{split}
	\right.
\end{equation}
The results of this paper may be extended to the case where $F$ has additional dependence on $u$, $x$, or $t$, in which case $F$ requires additional structure in order for the comparison principle to hold. To simplify the presentation, we take $F$ as in \eqref{A:F}. One consequence is that the solution operator for \eqref{E:eq} is invariant under translations in both the independent and dependent variables.

In order for \eqref{E:eq} to be well-posed for all continuous $W$ and uniformly continuous $u_0$, the Hamiltonians need to be more regular than what is required in the classical viscosity theory. As explained in \cite{LSnonsmooth} and \cite{Snotes}, it is necessary to assume that
\begin{equation}
	H^i = H^i_1 - H^i_2 \text{ for convex $H^i_1,H^i_2 : \RR^d \to \RR$ with $H^i_1, H^i_2 \ge 0$.} \label{A:Hdiffconvex}
\end{equation}
The non-negativity is imposed here only to simplify some arguments in what follows, and the setting can be reduced to the general case by transforming the equation appropriately. 

Letting $H$ depend additionally on $u$ or $x$ makes the question of well-posedness for \eqref{E:eq} highly nontrivial. Indeed, there is no pathwise theory for equations of the form
\[
	du = F(D^2 u, Du)\;dt + \sum_{i=1}^m H^i(Du, u, x) \circ dW^i,
\]
except for some special cases, for instance, if the dependence of $H$ on $Du$ is linear. 

Under certain assumptions, \eqref{E:eq} is well-posed for $H$ depending nonlinearly on both $Du$ and $x$. In this case, the lack of uniform regularity estimates for the solutions becomes an obstacle in the construction of schemes for \eqref{E:eq}. These issues will be the subject of a future work.

The homogeneity of $H$ in space allows us to forego difficult questions about regularity, because the spatial modulus of continuity for the solution of \eqref{E:eq} is retained for all time. In particular, throughout much of the paper, the initial condition $u_0$ is fixed and satisfies
\begin{equation} \label{A:initialcondition}	
	u_0 \in C^{0,1}(\RR^d) \quad \text{and} \quad \nor{Du_0}{\oo} \le L,
\end{equation}
and therefore, $H(p)$ may be redefined for $|p| > L$ without affecting the solution. Since \eqref{A:Hdiffconvex} implies that $H$ is locally Lipschitz, we may then assume that
\begin{equation}
	\text{for some } C = C_L > 0, \quad \nor{DH}{\oo} = C_L < \oo. \label{A:Hlingrowth}
\end{equation}
Note that \eqref{A:Hlingrowth} implies that $H$ grows at most linearly as $|p| \to +\oo$. 

In some parts of the paper, to allow for a more flexible solution theory, especially when we discuss schemes for second order equations, the hypothesis \eqref{A:Hdiffconvex} is replaced with the stronger assumption
\begin{equation}\label{A:HC2}
	H \in C^k(\RR^d, \RR^m) \quad \text{for some } k = 2, 3, \ldots.
\end{equation}
If $H$ satisfies \eqref{A:HC2}, and therefore $H \in C^{1,1}(\RR^d,\RR^m)$, then \eqref{A:Hdiffconvex} is satisfied on every bounded set of gradients.

\subsection{Smooth solutions of the Hamilton-Jacobi part of \eqref{E:eq}}

The definition of pathwise viscosity solutions relies on the existence of local-in-time, smooth-in-space solutions of the Hamilton-Jacobi part of equation \eqref{E:eq}. More precisely, for $t_0 \in [0,T]$ and $\phi \in C^{1,1}(\RR^d)$, the goal is to find an open interval $I \subset [0,T]$ containing $t_0$ and a solution $\Phi \in C(I, C^{1,1}(\RR^d))$ of
\begin{equation}\label{E:HJpart}
	d\Phi = \sum_{i=1}^m H^i(D\Phi) \circ dW^i \quad \text{in } \RR^d \times I \quad \text{and} \quad \Phi(\cdot,t_0) = \phi \quad \text{in } \RR^d.
\end{equation}
Such solutions are defined through a density argument, that is, the solution operator for \eqref{E:HJpart} for smooth paths extends continuously to continuous paths. This is justified with the computations below, and is consistent with the cases where $W$ is a Brownian motion or, more generally, a geometric rough path.

When $H$ and $\phi$ are smooth, the construction of such solutions can be accomplished for any smooth $\phi$ by inverting the characteristics associated to \eqref{E:HJpart}. Because $H$ is independent of $x$, this amounts to inverting the map
\begin{equation}
	x \mapsto X(x,t) := x - \sum_{i=1}^m DH^i(D\phi(x)) \pars{ W^i(t) - W^i(t_0)}. \label{E:characteristic}
\end{equation}
The continuity of $W$ implies that there exists an interval $I \ni t_0$ such that
\[
	\sup_{t \in I} \abs{ W(t) - W(t_0)} < \frac{1}{\nor{D^2 H}{\oo} \nor{D^2\phi}{\oo}},
\]
whence \eqref{E:characteristic} is invertible for all $t \in I$. The solution is then given by $\Phi(x,t) := Z(X\nv(x,t),t)$, where
\begin{equation}
	Z(x,t) := \phi(x) + \sum_{i=1}^m \pars{ H^i(D\phi(x)) - D\phi(x) \cdot DH^i(D\phi(x)) } \pars{ W^i(t) - W^i(t_0)}. \label{E:Zcharacteristic}
\end{equation}
This can be confirmed with a simple calculation when $W$ is smooth. For general continuous paths, the formula holds by a density argument, since all expressions depend only on the values of $W$, and not on its derivatives. 

Notice also that the regularity of $\Phi$ improves with that of $H$ and $\phi$. Indeed, differentiating \eqref{E:Zcharacteristic} leads to the relation $D\Phi(X(x,t),t) = D\phi(x,t)$, and therefore, in view of \eqref{E:characteristic}, the solution $\Phi$ belongs to $C(I, C^k(\RR^d))$ as long as $H \in C^k(\RR^d, \RR^m)$ and $\phi \in C^k(\RR^d)$ for some $k = 2, 3, \ldots$. Furthermore, if $D^j \phi$ is bounded for some $j = 0, 1, 2, \ldots, k$, then, shrinking $I$ if necessary,
\[
	\sup_{t \in I} \nor{D^j \Phi(\cdot,t)}{\oo} < \oo.
\]

This strategy breaks down when $H$ is only assumed to satisfy \eqref{A:Hdiffconvex}, since such Hamiltonians are not even continuously differentiable in general. In this case, only very particular smooth solutions of \eqref{E:HJpart} can be constructed. Assume $\eta: \RR^d \to \RR$ is strictly convex, and, for $\delta > 0$, define
\begin{equation}
	\Phi(x,t) := \sup_{p \in \RR^d} \left\{ p \cdot x - \eta(p) - \delta \sum_{i=1}^m \pars{ H^i_1(p) + H^i_2(p)} + \sum_{i=1}^m H^i(p) (W^i(t) - W^i(t_0)) \right\}. \label{E:smoothsolution}
\end{equation}

\begin{lemma} \label{L:smoothsolutions}
	Let $H$ satisfy \eqref{A:Hdiffconvex} and \eqref{A:Hlingrowth}. If the open interval $I \ni t_0$ is such that
	\[
		\sup_{t \in I} \max_{i=1,2,\ldots,m} \abs{ W^i(t) - W^i(t_0)} < \delta,
	\]
	then the function $\Phi$ defined by \eqref{E:smoothsolution} belongs to $C(I, C^{1,1}(\RR^d))$, and is a solution of \eqref{E:HJpart} with
	\begin{equation} \label{E:initialphi}
		\Phi(\cdot, t_0) = \phi(x) := \sup_{p \in \RR^d} \left\{ p \cdot x - \eta(p) - \delta \sum_{i=1}^m \pars{ H^i_1(p) + H^i_2(p)} \right\}.
	\end{equation}
\end{lemma}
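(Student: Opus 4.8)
The plan is to recognize $\Phi(\cdot,t)$ as a Legendre--Fenchel conjugate and to exploit the stability of conjugation under perturbations of the function being transformed. For $t\in I$ set
\[
	g_t(p) := \eta(p) + \delta\sum_{i=1}^m\bigl(H^i_1(p)+H^i_2(p)\bigr) - \sum_{i=1}^m H^i(p)\bigl(W^i(t)-W^i(t_0)\bigr),
\]
so that $\Phi(x,t)=\sup_{p\in\RR^d}\{p\cdot x - g_t(p)\} = g_t^*(x)$. Writing $a_i := W^i(t)-W^i(t_0)$ and using $H^i=H^i_1-H^i_2$,
\[
	g_t(p) = \eta(p) + \sum_{i=1}^m(\delta-a_i)H^i_1(p) + \sum_{i=1}^m(\delta+a_i)H^i_2(p).
\]
The hypothesis $\sup_{t\in I}\max_i|a_i|<\delta$ makes every coefficient strictly positive, so, since the $H^i_j$ are convex and nonnegative by \eqref{A:Hdiffconvex}, $g_t$ is convex, bounded below by $\eta$, and inherits the (strong) convexity modulus of $\eta$, uniformly in $t\in I$. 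Standard convex analysis then gives that $g_t^*=\Phi(\cdot,t)$ is finite, lies in $C^{1,1}(\RR^d)$ with $\sup_{t\in I}\nor{D^2\Phi(\cdot,t)}{\oo}$ controlled by the convexity of $\eta$, and that $D\Phi(x,t)=p(x,t)$, the unique maximizer in \eqref{E:smoothsolution}, which depends continuously on $(x,t)\in\RR^d\times I$. The identity $\Phi(\cdot,t_0)=\phi$ of \eqref{E:initialphi} is immediate, since $a_i=0$ at $t=t_0$.

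Next I would verify the equation for smooth driving paths. If $W\in C^1(I;\RR^m)$, then $t\mapsto g_t(p)$ is $C^1$ with $\partial_t g_t(p)=-\sum_i H^i(p)\dot W^i(t)$, locally uniformly in $p$, so Danskin's theorem (the envelope theorem for conjugates of strictly convex, coercive functions) shows $\Phi$ is $C^1$ in $t$ with
\[
	\partial_t\Phi(x,t) = -\partial_t g_t\bigl(p(x,t)\bigr) = \sum_{i=1}^m H^i\bigl(p(x,t)\bigr)\dot W^i(t) = \sum_{i=1}^m H^i\bigl(D\Phi(x,t)\bigr)\dot W^i(t).
\]
Together with the $C^{1,1}$-in-$x$ regularity above, this exhibits $\Phi$ as a genuine solution of \eqref{E:HJpart} whenever $W$ is smooth.

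Finally I would pass to an arbitrary continuous $W$ through the density procedure by which pathwise solutions of \eqref{E:HJpart} are defined. Choose smooth $W_n\to W$ uniformly on $I$ with $W_n(t_0)=W(t_0)$, and let $\Phi_n$ be given by \eqref{E:smoothsolution} with $W$ replaced by $W_n$; by the previous step each $\Phi_n$ is the classical, hence pathwise, solution of \eqref{E:HJpart} driven by $W_n$ with the \emph{same} initial datum \eqref{E:initialphi}. The perturbed functions $g^{(n)}_t\to g_t$ locally uniformly, uniformly in $t\in I$, while keeping a common strong-convexity modulus and a common coercive lower bound coming from $\eta$ (the added terms stay nonnegative for $n$ large, since $|W_n^i(t)-W_n^i(t_0)|<\delta$ eventually). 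A standard stability property of the Legendre transform then gives $\Phi_n\to\Phi$ and $D\Phi_n\to D\Phi$ locally uniformly, uniformly in $t\in I$, with uniform $C^{1,1}$ bounds; combined with the continuity of $t\mapsto g_t$ this also yields $\Phi\in C(I;C^{1,1}(\RR^d))$. Hence $\Phi$ is the pathwise solution of \eqref{E:HJpart} with initial datum \eqref{E:initialphi}.

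The step I expect to be the main obstacle is the last one: making precise the sense in which conjugation is continuous under $W_n\to W$, so as to obtain convergence of the \emph{gradients} $D\Phi_n\to D\Phi$ (not merely of $\Phi_n$), uniformly in $t$ and with uniform $C^{1,1}$ bounds. This is exactly where the strong convexity of $\eta$ and the strict sign conditions $\delta\mp a_i>0$ do the real work, and where one must be careful to keep every estimate uniform in both $t\in I$ and $n$.
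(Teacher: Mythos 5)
Your proposal is correct and reaches the stated conclusion, but it is structured differently from the paper's proof in one respect worth noting. The paper's key move is to invoke the Hopf formula for the solution operators $S^i(t)$ of $u_t = H^i(Du)$ acting on convex data, which identifies \eqref{E:smoothsolution} as $\Phi(\cdot,t) = \prod_{i=1}^m S^i(W^i(t)-W^i(t_0))\phi$ with $\phi$ as in \eqref{E:initialphi}. Once $\Phi$ is recognized as a composition of exact solution semigroups evaluated at the times $W^i(t)-W^i(t_0)$, the equation for smooth $W$ follows from a one-line chain-rule computation, and the passage to continuous $W$ is immediate because the formula depends on $W$ only through its values and not its derivatives --- which is precisely the density procedure by which pathwise solutions of \eqref{E:HJpart} are defined. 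You instead verify the PDE directly on the sup formula via Danskin's envelope theorem ($D\Phi = p^*$, $\partial_t\Phi = -\partial_t g_t(p^*)$) and then carry out the density step by hand through stability of the Legendre transform under uniform perturbation of $g_t$. Both arguments are sound; yours is more self-contained, while the paper's makes the limit passage and the uniformity in $t$ essentially automatic. One caveat that applies to both proofs: the $C^{1,1}$ (rather than merely $C^1$) regularity of $\Phi(\cdot,t)$, and the uniform bound on $D^2\Phi(\cdot,t)$, genuinely require uniform (strong) convexity and superlinearity of $\eta$, not just the strict convexity stated in the hypothesis --- you flag this correctly in your final paragraph, and in the only place the lemma is applied (Lemma \ref{L:distancefunction}, where $\eta(p)=\tfrac{\delta}{2}|p|^2$) these stronger properties hold.
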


\begin{proof}
	For all $x \in \RR^d$ and $t \in I$, the function
	\[
		p \mapsto \eta(p) + \delta \sum_{i=1}^m \pars{ H^i_1(p) + H^i_2(p)} - \sum_{i=1}^m H^i(p) \pars{ W^i(t) - W^i(t_0)} - p \cdot x
	\]
	is strictly convex, and therefore attains a unique global minimum. The smoothness of $\Phi$ in $x$ then follows from the implicit function theorem.
	
	Now, for $t \in \RR$, let $S^i(t) : UC(\RR^d) \to UC(\RR^d)$ be the solution operator for the equation $u_t = H^i(Du)$. If $\psi \in UC(\RR^d)$ is convex, then the Hopf formula gives
	\[
		S^i(t)\psi(x) = \sup_{p \in \RR^d} \left\{ p \cdot x - \psi^*(p) + tH^i(p) \right\},
	\]
	and so \eqref{E:smoothsolution} can be rewritten as
	\[
		\Phi(x,t) = \prod_{i=1}^m S^i(W^i(t) - W^i(t_0))\phi(x)
	\]
	with $\phi$ as in \eqref{E:initialphi}. If $W$ is smooth, then the fact that $\Phi$ is a solution of \eqref{E:HJpart} is justified by the regularity of $\Phi$ and a simple calculation. The result holds for continuous $W$ by a density argument.
\end{proof}

As in the classical viscosity theory, many quantitative arguments involve doubling variables, and it is therefore important to have a smooth solution of \eqref{E:HJpart} that behaves like the penalizing ``distance function''
\begin{equation}
	(x,y) \mapsto \frac{|x-y|^2}{2\delta}. \label{E:classicaldistance}
\end{equation}
In the present setting, this is accompished with a function $\Phi_\delta: \RR^d \times [0,T]^2 \times C([0,T], \RR^m) \to \RR$ that is equal to a particular choice of \eqref{E:smoothsolution} near the diagonal $\{ (t,t) \in [0,T]^2\}$, and such that $\Phi_\delta(x-y,s,t;W)$ exhibits similar growth as \eqref{E:classicaldistance} when $|x-y|$ is large.

Define the neighborhood $U_\delta(W)$ by
\[
	U_\delta(W) := \left\{ (s,t) \in [0,T]^2 : \mathrm{osc}(W,s,t) < \delta \right\},
\]
let the projection $\pi_\delta(W): [0,T]^2 \to \oline{U_\delta(W)}$ be such that $\pi_\delta(W)(s,t)$ is the element $(\tilde s,\tilde t) \in \oline{U_\delta(W)}$ closest to $(s,t)$ on the line $\tilde s + \tilde t = s + t$, and set
\begin{equation}\label{E:distancefunction}
	\begin{split}
	\Phi_\delta(x,s,t; W) := 
	\begin{dcases}
		\sup_{p \in \RR^d} \left\{ p \cdot x - \frac{\delta}{2} |p|^2 - \delta \sum_{i=1}^m (H^i_1(p) + H^i_2(p)) \right. &\\
		\qquad + \left. \sum_{i=1}^m H^i(p) \pars{ W^i(s) - W^i(t)} \right\} & \text{if } (s,t) \in \oline{U_\delta(W)},\\[1.2mm]
		\Phi_\delta\pars{ x, \pi_\delta(W)(s,t); W} & \text{if } (s,t) \notin \oline{U_\delta(W)}.
	\end{dcases}
	\end{split}
\end{equation}

\begin{lemma} \label{L:distancefunction}
	Assume $H$ satisfies \eqref{A:Hdiffconvex} and \eqref{A:Hlingrowth}, and let $\Phi_\delta$ be defined as in \eqref{E:distancefunction}. For some $C = C_L > 0$ and for all $\delta > 0$ and $W \in C([0,T]; \RR^m)$, the following hold:
	\begin{enumerate}
	\item[(a)] For all $x \in \RR^d$ and $(s,t), (\tilde s, t) \in U_\delta(W)$,
		\[
			\abs{ \Phi_\delta(x,s, t; W) - \Phi_\delta(x,\tilde s, t; W) } \le C\pars{ 1 + \frac{|x|}{\delta}} \abs{ W(s) - W(\tilde s)}.
		\]
		
	\item[(b)] For all $(s,t) \in [0,T]^2$, $\Phi_\delta(\cdot, s,t; W)$ is convex and semiconcave with constant $\frac{1}{\delta}$. That is,
		\[
			0 \le D^2\Phi_\delta(x,s,t;W) \le \frac{1}{\delta} I_d \quad \text{in the sense of distributions.}
		\]
	\item[(c)] For all $x \in \RR^d$ and $s,t \in [0,T]$,
		\[
			\frac{1}{2(C+1)\delta} |x|^2 - C\delta \le \Phi_\delta(x,s,t ; W) \le \frac{1}{2\delta} |x|^2. 
		\]
	\item[(d)] For any fixed $y \in \RR^d$ and $t \in [0,T]$, the functions
		\[
			(x,s) \mapsto \Phi_\delta(x - y, s,t; W) \quad \text{and} \quad (x,s) \mapsto - \Phi_\delta(y - x, t, s ; W)
		\]
		are $C(I, C^{1,1}(\RR^d))$-solutions of \eqref{E:HJpart}, where $I := \left\{ s \in [0,T] : \mathrm{osc}(W,s,t) < \delta \right\}$.
	\end{enumerate}
\end{lemma}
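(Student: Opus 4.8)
The plan is to reduce everything to the case $(s,t)\in\overline{U_\delta(W)}$, in which $\Phi_\delta(\cdot,s,t;W)$ is a Legendre--Fenchel transform, then read off (b) and (c) from convex duality, obtain (a) by locating the maximizer, and deduce (d) by identifying $\Phi_\delta$ with the smooth solutions produced in Lemma~\ref{L:smoothsolutions}. The complementary case $(s,t)\notin\overline{U_\delta(W)}$ needs no separate argument: the relation $\Phi_\delta(x,s,t;W)=\Phi_\delta\big(x,\pi_\delta(W)(s,t);W\big)$ leaves the spatial variable untouched, so (b) and (c) transfer verbatim, and (a) is asserted only for $(s,t),(\tilde s,t)\in U_\delta(W)$. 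The first step is to regroup the terms in \eqref{E:distancefunction}: using $H^i=H^i_1-H^i_2$, for $(s,t)\in\overline{U_\delta(W)}$ one has $\Phi_\delta(x,s,t;W)=g_{s,t}^*(x):=\sup_{p\in\RR^d}\big(p\cdot x-g_{s,t}(p)\big)$, where
\[
	g_{s,t}(p):=\frac{\delta}{2}|p|^2+\underbrace{\sum_{i=1}^m\Big[\big(\delta-(W^i(s)-W^i(t))\big)H^i_1(p)+\big(\delta+(W^i(s)-W^i(t))\big)H^i_2(p)\Big]}_{=:\;r_{s,t}(p)}.
\]
Since $(s,t)\in\overline{U_\delta(W)}$ forces $|W^i(s)-W^i(t)|\le\mathrm{osc}(W,s,t)\le\delta$, the coefficients in $r_{s,t}$ lie in $[0,2\delta]$; hence $r_{s,t}$ is convex, nonnegative, and — by the growth normalization of $H$ from Section~\ref{S:definition} (under which $H^i_1+H^i_2$ grows at most linearly, so that $r_{s,t}$ is $C_L\delta$-Lipschitz and $r_{s,t}(p)\le C_L\delta(1+|p|)$) — globally Lipschitz with linear growth. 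Consequently $g_{s,t}$ is finite, $\delta$-strongly convex (that is, $g_{s,t}-\frac{\delta}{2}|\cdot|^2$ is convex), and $\frac{\delta}{2}|p|^2\le g_{s,t}(p)\le\frac{(C+1)\delta}{2}|p|^2+C\delta$ for some $C=C_L$.

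Part (b) is immediate: as a conjugate function $\Phi_\delta(\cdot,s,t;W)=g_{s,t}^*$ is convex, so $D^2\Phi_\delta\ge0$; and $\delta$-strong convexity of $g_{s,t}$ is equivalent to $\frac1\delta$-Lipschitz continuity of $\nabla g_{s,t}^*$, i.e.\ $0\le D^2\Phi_\delta\le\frac1\delta I_d$ in the distributional sense, which in particular gives $\Phi_\delta(\cdot,s,t;W)\in C^{1,1}(\RR^d)$, to be used in (d). Part (c) follows by transforming the two-sided bound on $g_{s,t}$: from $g_{s,t}(p)\ge\frac\delta2|p|^2$ we get $\Phi_\delta(x,s,t;W)\le\frac{|x|^2}{2\delta}$, and from $g_{s,t}(p)\le\frac{(C+1)\delta}2|p|^2+C\delta$ we get $\Phi_\delta(x,s,t;W)\ge\frac{|x|^2}{2(C+1)\delta}-C\delta$.

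For (a), let $p^*=p^*(x,s,t)$ be the unique maximizer defining $\Phi_\delta(x,s,t;W)$ (strict convexity of $g_{s,t}$). The optimality relation $x\in\partial g_{s,t}(p^*)=\delta p^*+\partial r_{s,t}(p^*)$, together with the $C_L\delta$-Lipschitz bound on $r_{s,t}$, yields $|\delta p^*-x|\le C_L\delta$, hence $|p^*|\le\frac{|x|}{\delta}+C_L$ and, by \eqref{A:Hlingrowth}, $|H^i(p^*)|\le C\big(1+\frac{|x|}{\delta}\big)$. A short computation gives $g_{\tilde s,t}(p)-g_{s,t}(p)=\sum_{i=1}^m\big(W^i(s)-W^i(\tilde s)\big)H^i(p)$, so, using that $p^*$ is admissible (not necessarily optimal) for $\Phi_\delta(x,\tilde s,t;W)$,
\[
	\Phi_\delta(x,s,t;W)-\Phi_\delta(x,\tilde s,t;W)\le g_{\tilde s,t}(p^*)-g_{s,t}(p^*)=\sum_{i=1}^m\big(W^i(s)-W^i(\tilde s)\big)H^i(p^*)\le C\Big(1+\tfrac{|x|}{\delta}\Big)|W(s)-W(\tilde s)|.
\]
The reverse inequality follows identically using the maximizer at $\tilde s$, which obeys the same bound, and this proves (a).

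Finally, for (d): when $s\in I:=\{s\in[0,T]:\mathrm{osc}(W,s,t)<\delta\}$ we have $(s,t)\in U_\delta(W)$, and $\Phi_\delta(x,s,t;W)$ is exactly the function \eqref{E:smoothsolution} with $t_0=t$ and the strictly convex choice $\eta(p)=\frac\delta2|p|^2$; Lemma~\ref{L:smoothsolutions} (applied on compactly contained subintervals of $I$, to accommodate its strict hypothesis) then shows $(x,s)\mapsto\Phi_\delta(x,s,t;W)$ is a $C(I,C^{1,1}(\RR^d))$-solution of \eqref{E:HJpart}, and translation invariance of \eqref{E:HJpart} in $x$ gives the same for $(x,s)\mapsto\Phi_\delta(x-y,s,t;W)$. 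For the second function, note that for $s\in I$ the increments appearing in $\Phi_\delta(z,t,s;W)$ are $W^i(t)-W^i(s)=(-W^i)(s)-(-W^i)(t)$, so $\Phi_\delta(z,t,s;W)=\Phi_\delta(z,s,t;-W)$; applying the previous step with the path $-W$ (for which $\mathrm{osc}$, hence $I$, is unchanged) shows $(z,s)\mapsto\Phi_\delta(z,t,s;W)$ is a $C(I,C^{1,1})$-solution of the Hamilton--Jacobi equation driven by $-W$, whence substituting $z=y-x$ and negating — a sign-tracking that is immediate for smooth $W$ and passes to general continuous $W$ by the density argument underlying the definition of pathwise solutions — shows that $(x,s)\mapsto-\Phi_\delta(y-x,t,s;W)$ is a $C(I,C^{1,1})$-solution of \eqref{E:HJpart}. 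The step I expect to be the crux is (a): obtaining the stated dependence on $|x|/\delta$ hinges on pinning down the maximizer $p^*$, which rests on the Lipschitz control of the auxiliary convex term $r_{s,t}$ — that is, on the growth normalization of $H$ and its convex components — rather than on any soft property of the conjugate.
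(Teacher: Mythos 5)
Your proposal is correct and follows essentially the same route as the paper: both prove (a) by bounding the maximizer $p^*$ via the optimality condition and the linear growth of $H$, obtain (b) and (c) from convex duality (strong convexity of the conjugand and Young-type two-sided bounds), and reduce (d) to Lemma \ref{L:smoothsolutions} with $\eta(p)=\tfrac{\delta}{2}|p|^2$ together with the identity $\Phi_\delta(x,s,t;W)=\Phi_\delta(x,t,s;-W)$. You in fact supply more detail than the paper does on (d) (the sign-tracking for $-\Phi_\delta(y-x,t,s;W)$ and the exhaustion of $I$ by compact subintervals), and your implicit use of a Lipschitz/linear-growth bound on $H^i_1+H^i_2$ matches what the paper itself assumes.
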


Note that the local regularity given by (a) also applies to the second time variable, in view of the identity $\Phi_\delta(x,s,t; W) = \Phi_\delta(x,t,s;-W)$.

\begin{proof}[Proof of Lemma \ref{L:distancefunction}]
	To prove (a), we first show that there exists $C = C_L > 0$ such that, for any $x \in \RR^d$ and $(s,t) \in U_\delta(W)$, the unique maximum $p^*$ achieved in the definition of $\Phi_\delta$ satisfies $\delta \abs{ p^*} \le C\delta + |x|$. Indeed, if
	\[
		J(p) := p \cdot x - \frac{\delta}{2} |p|^2 - \delta \sum_{i=1}^m (H^i_1(p) + H^i_2(p)) + \sum_{i=1}^m H^i(p) \pars{ W^i(s) - W^i(t)},
	\]
	then, for any $q \in \RR^d$, \eqref{A:Hlingrowth} and the inequality $J(p^*) \ge J(p^* + q)$ imply that
	\[
		\delta p^* \cdot \frac{q}{|q|} - \frac{\delta}{2} |q| \le |x| + C\delta.
	\]
	Setting $q = t \frac{p^*}{|p^*|}$ and sending $t \to 0^+$ yields the claim. The time-regularity estimate in (a) is then immediate.
	
	As a pointwise supremum of affine functions, $\Phi_\delta$ is clearly convex, while the semiconcavity follows from elementary convex analysis and the convexity of
	\[
		p \mapsto \delta \sum_{i=1}^m (H^i_1(p) + H^i_2(p)) - \sum_{i=1}^m H^i(p) \pars{ W^i(s) - W^i(t)}.
	\]
	The estimate in (c) can be deduced from Young's inequality and the fact that, for some $C = C_L> 0$ and for all $p \in \RR^d$ and $(s,t) \in U_\delta(W)$,
	\[
		0 \le \delta \sum_{i=1}^m (H^i_1(p) + H^i_2(p)) - \sum_{i=1}^m H^i(p) \pars{ W^i(s) - W^i(t)}  \le C\delta(1 + |p|).
	\]
	
	Finally, (d) is a consequence of Lemma \ref{L:smoothsolutions}.
\end{proof}

The following definition for solutions of \eqref{E:eq} relies on the existence of solutions of \eqref{E:HJpart} that are $C^2$, and, in particular, is only valid if $H$ is at least twice continuously differentiable.

\begin{definition} \label{D:smoothH}
	A function $u \in USC(\RR^d \times [0,T])$ (resp. $u \in LSC(\RR^d \times [0,T])$) is called a pathwise viscosity sub-solution (resp. super-solution) of \eqref{E:eq} for $H$ satisfying \eqref{A:HC2} if $u(\cdot,0) \le u_0$ (resp. $u(\cdot,0) \ge u_0$) and, whenever $\psi \in C^1([0,T])$, $(x_0,t_0) \in \RR^d \times [0,T]$, $I \ni t_0$, $\Phi \in C(I, C^{2}(\RR^d))$ is a solution of \eqref{E:HJpart} in $\RR^d \times I$, and
\[
	u(x,t) - \Phi(x,t) - \psi(t)
\]
attains a local maximum (resp. minimum) at $(x_0,t_0) \in \RR^d \times I$, then
\begin{equation}\label{E:solutionineq}
	\psi'(t_0) \le F(D^2\Phi(x_0,t_0), D\Phi(x_0,t_0)) \quad \pars{ \text{resp. } \psi'(t_0) \ge F(D^2\Phi(x_0,t_0), D\Phi(x_0,t_0))}.
\end{equation} 
A solution of \eqref{E:eq} is both a sub- and super-solution.
\end{definition}

If $\Phi \in C(I, C^{1,1}(\RR^d))$ is a solution of \eqref{E:HJpart}, then it is not possible to make sense of \eqref{E:solutionineq}, since $D^2\Phi$ may not be defined at every point. The following definition is made to comply with the case when $H$ only satisfies \eqref{A:Hdiffconvex}.

\begin{definition} \label{D:nonsmoothH}
	A function $u \in USC(\RR^d \times [0,T])$ (resp. $u \in LSC(\RR^d \times [0,T])$) is called a pathwise viscosity sub-solution (resp. super-solution) of \eqref{E:eq} if $u(\cdot,0) \le u_0$ (resp. $u(\cdot,0) \ge u_0$) and, whenever $I \subset [0,T]$ and $\Phi \in C(I, C^{1,1}(\RR^d))$ is a solution of \eqref{E:HJpart} in $\RR^d \times I$, the function $v: \RR^d \times I \to \RR$ defined by
\[
	v(\xi,t) := \sup_{x \in \RR^d} \left\{ u(x,t) - \Phi(x-\xi,t) \right\} \quad \pars{ \text{resp. } v(\xi,t) := \inf_{x \in \RR^d} \left\{ u(x,t) + \Phi(x - \xi,t) \right\} }
\]
is a classical viscosity sub- (resp. super-) solution of the equation
\[
	v_t = F(D^2 v, Dv) \quad \text{in } \RR^d \times I.
\]
A solution of \eqref{E:eq} is both a sub- and super-solution.
\end{definition}

When $H$ satisfies \eqref{A:HC2}, Definition \ref{D:nonsmoothH} is equivalent to Definition \ref{D:smoothH}. In the first-order setting, that is, when $F \equiv 0$, Definition \ref{D:smoothH} may be used even if $H$ is not smooth, because it is not necessary to evaluate $D^2\Phi$ at any point.

With either definition, \eqref{E:eq} satisfies the following comparison principle, a proof for which can be found in \cite{LSsecondorder} or \cite{Snotes}: if $u \in USC(\RR^d \times [0,T])$ and $v \in LSC(\RR^d \times [0,T])$ are respectively a sub- and super-solution of \eqref{E:eq}, then, for all $t \in (0,T]$,
\begin{equation}
	\sup_{x \in \RR^d} \pars{ u(x,t) - v(x,t)} \le \sup_{x \in \RR^n} \pars{ u(x,0) - v(x,0)}.  \label{E:comparison}
\end{equation}
A variant of the proof of the comparison principle gives the following path-stability estimate \cite{Snotes}.

\begin{lemma} \label{L:LSestimate}
	Assume that $H$ satisfies \eqref{A:Hdiffconvex}. There exists $C = C_L > 0$ such that, if $u_0 \in C^{0,1}(\RR^d)$ with $\nor{Du_0}{\oo} \le L$, $W^1,W^2 \in C([0,T], \RR^m)$, and $u^1, u^2 \in C([0,T], C^{0,1}(\RR^d))$ are the solutions of \eqref{E:eq} with respectively the paths $W^1$ and $W^2$, then
	\[
		\sup_{ (x,t) \in \RR^d \times [0,T]} \abs{ u^1(x,t) - u^2(x,t) } \le C \max_{t \in [0,T] } \abs{ W^1(t) - W^2(t)}.
	\]
\end{lemma}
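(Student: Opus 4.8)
\emph{Proof proposal.}
The plan is to adapt the proof of the comparison principle \eqref{E:comparison} for \eqref{E:eq} (see \cite{LSsecondorder, Snotes}) and make it quantitative in the path. Recall the mechanism: for a single path $W$, one compares a sub-solution $u$ and a super-solution $v$ by doubling variables in both space and time, using the smooth Hamilton--Jacobi solutions $\Phi_\delta(\cdot\,;W)$ of Lemma \ref{L:distancefunction} in place of the penalization $\abs{x-y}^2/(2\delta)$; because $\Phi_\delta(x-y,t,s;W)$ solves \eqref{E:HJpart} for $W$ in the variables $(x,t)$ and $-\Phi_\delta(x-y,t,s;W)$ solves it in $(y,s)$ (Lemma \ref{L:distancefunction}(d)), both $u$ and $v$ can be tested against it (Definitions \ref{D:smoothH} and \ref{D:nonsmoothH}), while a time-doubling penalty $\abs{t-s}^2/(2\epsilon)$ keeps the arguments inside the set $U_\delta(W)$ on which $\Phi_\delta$ is genuinely a solution; one is then left with the classical Crandall--Ishii--Lions argument for the \emph{path-free} equation $v_t = F(D^2 v, Dv)$. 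I would run this scheme with $u = u^1$ and $v = u^2$, tracking carefully where the two paths enter.

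Put $R := \max_{t \in [0,T]} \abs{W^1(t) - W^2(t)}$. The one genuinely new ingredient is a bound on the sensitivity of the penalization kernel to the path: since $\Phi_\delta(x,s,t;W)$ is a supremum over $p \in \RR^d$ of functions whose only $W$-dependence is the term $\sum_i H^i(p)\pars{W^i(s) - W^i(t)}$, and since the maximizing $p^\ast$ satisfies $\delta\abs{p^\ast} \le C_L\delta + \abs{x}$ exactly as in the proof of Lemma \ref{L:distancefunction}(a), the linear growth \eqref{A:Hlingrowth} yields
\[
	\abs{\Phi_\delta(x,s,t;W^1) - \Phi_\delta(x,s,t;W^2)} \le C_L\pars{1 + \frac{\abs{x}}{\delta}}\,R .
\]
Moreover, in the doubling argument the maximizing spatial points satisfy $\abs{x-y} \le C_L\delta$, by the quadratic lower bound of Lemma \ref{L:distancefunction}(c) combined with the Lipschitz estimate $\nor{Du^j(\cdot,t)}{\oo} \le L$, which persists for all $t$ because $H$ is spatially homogeneous --- this is also why every constant here depends only on $L$. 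Hence at the relevant points this kernel discrepancy is bounded by $C_L R$.

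The subtlety is that $u^1$ must now be tested against solutions of \eqref{E:HJpart} for $W^1$ whereas $u^2$ must be tested against solutions for $W^2$, so the doubled variables cannot be penalized by $\Phi_\delta(\cdot\,;W)$ for a single path; instead one uses a mixed penalization, obtained by inserting the difference of the two driving increments into the constructions \eqref{E:smoothsolution} and \eqref{E:distancefunction}, which solves \eqref{E:HJpart} for $W^1$ in $(x,t)$ and, after negation, for $W^2$ in $(y,s)$, on the set where the oscillations of $W^1$ and of $W^2$ about the anchoring time stay below $\delta$, and which reduces to the usual penalization up to an error of order $C_L\pars{1+\abs{x-y}/\delta}R$. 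Feeding this into the doubling argument, the mismatch between the two paths enters the combined viscosity inequality only through quantities controlled by the display above and by the (qualitative) path-stability of the Hamilton--Jacobi solutions, and produces a contribution of size $C_L R$; sending the doubling parameter to $0$, then $\delta \to 0$, and finally interchanging $u^1$ and $u^2$ gives $\sup_{\RR^d}\abs{u^1(\cdot,t) - u^2(\cdot,t)} \le C_L R$.

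I expect the main obstacle to be precisely the construction and control of this mixed penalization: arranging that it is compatible with \emph{both} Hamilton--Jacobi flows, that it agrees with the usual penalization up to an $O(R)$ error, and that the resulting $O(R)$ discrepancy propagates through the doubling (and the passage to the limit in $\delta$) without the constant acquiring any dependence on the moduli of continuity of $W^1$ and $W^2$. This is where the bookkeeping of \cite{LSsecondorder, Snotes} has to be followed closely; the remaining steps are routine variants of the classical viscosity-solution arguments.
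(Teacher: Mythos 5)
Your strategy is the right one and is essentially the argument the paper points to: the paper gives no proof of Lemma \ref{L:LSestimate}, describing it only as ``a variant of the proof of the comparison principle'' and citing \cite{Snotes} (it also remarks that one may instead prove the estimate for smooth paths, where the classical theory applies, and pass to continuous paths by density --- a slightly different route that simultaneously yields existence). Two points in your sketch deserve tightening. First, the ``mixed penalization'' you worry about requires no new construction: it is exactly \eqref{E:distancefunction} with the increment $W^i(s)-W^i(t)$ replaced by $W^{1,i}(s)-W^{2,i}(t)$, defined on the set where $\max_i\abs{W^{1,i}(s)-W^{2,i}(t) - (W^{1,i}(s_0)-W^{2,i}(t_0))}<\delta$; since only increments of the driving path enter \eqref{E:HJpart}, Lemma \ref{L:smoothsolutions} and the proof of Lemma \ref{L:distancefunction}(d) apply verbatim to show that $(x,s)\mapsto\Phi_\delta(x-y,s,t)$ solves \eqref{E:HJpart} for $W^1$ and $(y,t)\mapsto-\Phi_\delta(x-y,s,t)$ solves it for $W^2$, and your $O\!\left(\left(1+\abs{x}/\delta\right)R\right)$ comparison with the one-path kernel follows from the same bound $\delta\abs{p^*}\le C\delta+\abs{x}$ on the maximizer.

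Second, you cannot ``send $\delta\to 0$'' at the end. The mixed kernel is an admissible test pair only while the combined oscillation of $W^1$ and $W^2$ \emph{plus} the offset $R$ stays below $\delta$, so the argument forces $\delta>CR$; moreover the diagonal/initial terms contribute $\Phi_\delta(0,t,t)+C\delta+\tfrac{(C+1)L^2}{2}\delta = O(\delta)$ (as in Step 2 of the proof of Theorem \ref{T:pathwise}), which does not vanish as $\delta\to 0$ is impossible anyway. The correct conclusion of the doubling argument is a bound of the form $C\left(\delta+R\right)$ valid for all admissible $\delta$, and one \emph{optimizes} by taking $\delta$ proportional to $R$ (when $R=0$ one recovers the comparison principle by letting $\delta\to 0$). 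With that substitution, and with the time-doubling parameter handled as you describe, your outline closes and the constant depends only on $L$, as required.
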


It can be shown that, when $W \in C^1([0,T])$, the above notions of pathwise viscosity solutions are consistent with the standard definitions from the classical viscosity theory. Furthermore, solutions of \eqref{E:eq} are stable under uniform convergence. Therefore, the estimate in Lemma \ref{L:LSestimate}, which is proved first for smooth paths, also establishes the existence of pathwise viscosity solutions. Lemma \ref{L:LSestimate} can then be seen to hold for arbitrary continuous paths via a density argument.

Although the class of test functions defined by \eqref{E:smoothsolution} is rather restrictive, it is enough to prove both the comparison principle and the stability estimate. Indeed, only the ``distance function'' $\Phi_\delta$ in \eqref{E:distancefunction} is used in both proofs. When $H \in C^2(\RR^d)$, any initial condition $\Phi(\cdot,t_0) \in C^2(\RR^d)$ with bounded second derivatives yields a solution as in \eqref{E:HJpart}. In particular, by adding quadratic functions to $\Phi(\cdot,t_0)$, it may be assumed that the test functions in Definition \ref{D:smoothH} satisfy
\[
	\lim_{|x| \to +\oo} \frac{\Phi(x,t)}{|x|} = +\oo \quad \text{or} \quad \lim_{|x| \to +\oo} \frac{\Phi(x,t)}{|x|} = -\oo\qquad \text{uniformly for $t \in I$}.
\]
Thus, as in the classical viscosity theory, the maxima and minima in Definition \ref{D:smoothH} may be assumed to be strict without loss of generality.

Finally, we remark that, if $H$ satisfies \eqref{A:HC2}, then it is enough to use functions $\Phi \in C(I, C^k(\RR^d))$ in Definition \ref{D:smoothH}. The argument is almost identical to one from the classical viscosity theory, and it uses the fact that the solution operator for \eqref{E:HJpart} is contractive. 

\section{The general convergence result and applications} \label{S:scheme}

The constructions in this paper rely on the properties of a family of scheme operators, indexed by $h > 0$, $s,t \in [0,T]$ with $s \le t$, and a path $\zeta \in C([0,T], \RR^m)$:
\[
	S_h(t,s; \zeta) : (B)UC(\RR^d) \to (B)UC(\RR^d).
\]
We assume throughout that $S_h$ commutes with translations in both the independent and dependent variables, in order to reflect the corresponding translation invariance of \eqref{E:eq}. That is,
\begin{equation}\label{A:constantcommute}
	S_h(t,s; \zeta)(u + k) = S_h(t,s; \zeta)u + k \quad \text{for all} \quad k \in \RR \quad \text{and} \quad u \in (B)UC(\RR^d),
\end{equation} 
and
\begin{equation}\label{A:translatecommute}
	S_h(t,s; \zeta)\circ \tau_v = \tau_v \circ S_h(t,s; \zeta) \quad \text{for all} \quad v \in \RR^d, \quad \text{where }\tau_v u := u(\cdot + v).
\end{equation}

For a Hamiltonian $H$ satisfying \eqref{A:HC2} and a fixed continuous path $W \in C([0,T], \RR^m)$, we consider a family of paths $\{W_h\}_{h > 0} \subset C([0,T], \RR^m)$ and a partition width $\rho_h > 0$ satisfying
\begin{equation} \label{A:approxpath}
	h \mapsto \rho_h \text{ is increasing,} \quad \lim_{h \to 0} \nor{W_h - W}{\oo} = 0 = \lim_{h \to 0} \rho_h;
\end{equation}
\begin{equation}\label{A:monotonicity}
	\text{if } u_1 \le u_2 \text{ and } s,t \in [0,T] \text{ satisfy } 0 \le t - s \le \rho_h, \quad \text{then } S_h(t,s; W_h)u_1 \le S_h(t,s; W_h)u_2; 
\end{equation}
and
\begin{equation}\label{A:consistency}
	\left\{
	\begin{split}
		&\text{if $I \subset \RR$, $\Phi_h \in C(I, C^k(\RR^d))$ is a solution of } d\Phi_h = \sum_{i=1}^m H^i(D\Phi_h)\circ dW^i_h \text{ in } \RR^d \times I,\\
		&s_h, t_h \in I, \; 0 \le t_h - s_h \le \rho_h, \; \phi \in C^k(\RR^d), \; R > 0,\;  \text{and} \; \lim_{h \to 0} \nor{\Phi_h(\cdot,s_h) - \phi}{C^k(\RR^d)} = 0, \\[1.2mm]
		&\text{then }\lim_{h \to 0} \frac{ S_h(t_h,s_h; W_h)\Phi_h(\cdot,s_h)(x) - \Phi_h(x,s_h)  }{t_h - s_h} = F(D^2\phi(x), D\phi(x))\\[1.2mm]
		&\text{uniformly for $ x \in \RR^d$ and } \max_{j = 2,3, \ldots, k} \nor{D^j \phi}{\oo} \le R.
	\end{split}
	\right.
\end{equation}
The integer $k$ in \eqref{A:consistency} corresponds to the level of regularity of $H$ in \eqref{A:HC2}. In Sections \ref{S:pathwise} and \ref{S:regularrates}, we obtain error estimates for schemes for first-order equations with Hamiltonians satisfying the weaker condition \eqref{A:Hdiffconvex}, in which case the assumptions on the scheme operator will be modified.

%The scheme operator is used to build approximate solutions as follows. For a fixed path $\zeta \in C([0,T]; \RR^m)$, partition $\mcl P = \left\{0 = t_0 < t_1 < \cdots < t_N = T \right\}$ of $[0,T]$, and initial datum $u_0 \in BUC(\RR^d)$, define
%\begin{equation} \label{E:approxsolution}
%	\begin{dcases}
%		\tilde u_h(\cdot,0; \zeta, \mcl P) := u_0, &\\[1.2mm]
%		\tilde u_h(\cdot,t;\zeta, \mcl P) := S_h(t,t_n;\zeta)\tilde u_h(\cdot,t_n; \zeta, \mcl P) & \text{for } n = 0, 1, \ldots, N-1 \text{ and } t \in (t_n, t_{n+1}].
%	\end{dcases}
%\end{equation}
The scheme operator is used to build approximate solutions as follows. For a fixed path $\zeta \in C([0,T]; \RR^m)$, partition $\mcl P = \left\{0 = t_0 < t_1 < \cdots < t_N = T \right\}$ of $[0,T]$, and initial datum $u_0 \in BUC(\RR^d)$, define
\begin{equation} \label{E:approxsolution}
	\begin{dcases}
		v_h(\cdot,0; \zeta, \mcl P) := u_0, &\\[1.2mm]
		v_h(\cdot,t;\zeta, \mcl P) := S_h(t,t_n;\zeta)v_h(\cdot,t_n; \zeta, \mcl P) & \text{for } n = 0, 1, \ldots, N-1 \text{ and } t \in (t_n, t_{n+1}].
	\end{dcases}
\end{equation}

\begin{theorem} \label{T:generalconvergence}
	Assume $u_0 \in BUC(\RR^d)$, \eqref{A:F}, \eqref{A:HC2}, and $S_h$, $W_h$, and $\rho_h$ satisfy \eqref{A:constantcommute} - \eqref{A:consistency}. Let $\{ \mcl P_h \}_{h > 0}$ be a family of partitions of $[0,T]$ such that $\abs{\mcl P_h} \le \rho_h$ for all $h > 0$, and define $u_h := v_h(\cdot; W_h, \mcl P_h)$. Then, as $h \to 0$, $u_h$ converges locally uniformly to the pathwise viscosity solution $u$ of \eqref{E:eq}.
\end{theorem}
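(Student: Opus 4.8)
The plan is to run the Barles--Souganidis method of half-relaxed limits, adapted to the pathwise setting via the test functions $\Phi$ solving the Hamilton-Jacobi part \eqref{E:HJpart}. First I would establish a uniform bound on the family $\{u_h\}_{h>0}$: using \eqref{A:constantcommute} together with monotonicity \eqref{A:monotonicity}, one compares $u_h$ against spatially constant (or affine) supersolutions built from the scheme operator applied to $u_0 \pm \|u_0\|_\infty$-type barriers, propagating over the partition $\mcl P_h$; since $F(0,0)$ and $H(0)$ are finite, this gives $\sup_h \|u_h\|_{L^\infty(\RR^d \times [0,T])} < \infty$, so the relaxed limits
\[
	\oline u(x,t) := \limsup_{\substack{h \to 0 \\ (y,s) \to (x,t)}} u_h(y,s), \qquad \uline u(x,t) := \liminf_{\substack{h \to 0 \\ (y,s) \to (x,t)}} u_h(y,s)
\]
are well-defined, with $\oline u \in USC$, $\uline u \in LSC$, and $\uline u \le \oline u$. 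The goal is to show $\oline u$ is a pathwise viscosity subsolution and $\uline u$ a pathwise viscosity supersolution of \eqref{E:eq} in the sense of Definition \ref{D:smoothH}; then the comparison principle \eqref{E:comparison} forces $\oline u \le \uline u$, hence $\oline u = \uline u =: u$ is the unique pathwise viscosity solution, and equality of the relaxed limits upgrades the convergence to locally uniform.

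The heart of the argument is the subsolution property (the supersolution case being symmetric). Suppose $u(x,t) - \Phi(x,t) - \psi(t)$ has a strict local maximum at $(x_0,t_0)$, where $\Phi \in C(I, C^k(\RR^d))$ solves \eqref{E:HJpart} for the limiting path $W$ on an interval $I \ni t_0$, and $\psi \in C^1$. The key point is that $\Phi$ is built from $W$, not $W_h$, so I would invoke the path-stability for \eqref{E:HJpart}: using the explicit representation \eqref{E:Zcharacteristic}--\eqref{E:smoothsolution} (or the contractivity of the Hopf-type solution operator), the solutions $\Phi_h \in C(I, C^k(\RR^d))$ of \eqref{E:HJpart} driven by $W_h$ with $\Phi_h(\cdot, t_0) = \Phi(\cdot,t_0)$ satisfy $\|\Phi_h(\cdot, s) - \Phi(\cdot,s)\|_{C^k} \to 0$ uniformly for $s$ in a slightly smaller interval, by \eqref{A:approxpath}. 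Then, by the standard half-relaxed-limits bookkeeping, there exist $h_j \to 0$ and points $(x_j, t_j) \to (x_0, t_0)$ with $t_j$ a partition point of $\mcl P_{h_j}$ (or the left endpoint of the interval containing it) such that $u_{h_j} - \Phi_{h_j} - \psi$ has a local maximum at $(x_j, t_j)$ and $u_{h_j}(x_j,t_j) \to \oline u(x_0,t_0)$. Writing $t_j^- $ for the previous partition point, monotonicity \eqref{A:monotonicity} applied to the inequality $u_{h_j}(\cdot, t_j^-) \le \Phi_{h_j}(\cdot, t_j^-) + \psi(t_j^-) + c_j$ (with $c_j$ the value of the maximum) and the fact that $S_{h_j}$ reproduces the evolution of $\Phi_{h_j}$ up to the consistency error yields, after dividing by $t_j - t_j^-$ and using \eqref{A:constantcommute},
\[
	\frac{\psi(t_j) - \psi(t_j^-)}{t_j - t_j^-} \le \frac{S_{h_j}(t_j, t_j^-; W_{h_j})\Phi_{h_j}(\cdot, t_j^-)(x_j) - \Phi_{h_j}(x_j, t_j^-)}{t_j - t_j^-} + o(1).
\]
Passing to the limit, the left side tends to $\psi'(t_0)$, while the consistency assumption \eqref{A:consistency} — with $\phi := \Phi(\cdot, t_0)$, $s_h := t_j^-$, $t_h := t_j$, using that $\|\Phi_{h_j}(\cdot, t_j^-) - \Phi(\cdot, t_0)\|_{C^k} \to 0$ and that the relevant $C^k$-norms are bounded on a neighborhood — shows the right side tends to $F(D^2\Phi(x_0,t_0), D\Phi(x_0,t_0))$. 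This gives exactly \eqref{E:solutionineq}. The initial condition $\oline u(\cdot,0) \le u_0 \le \uline u(\cdot, 0)$ follows from a separate barrier argument at $t=0$, again using the translation invariance and monotonicity of $S_h$ together with consistency to show $u_h(\cdot, t) \to u_0$ as $(h,t) \to (0,0)$.

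The main obstacle I anticipate is the bookkeeping around the fact that the limiting test function $\Phi$ solves \eqref{E:HJpart} for $W$ whereas the scheme and its consistency are phrased in terms of $W_h$: one must carefully align the approximating solutions $\Phi_h$ (driven by $W_h$, with matching initial data at $t_0$) so that \eqref{A:consistency} applies, and simultaneously ensure that perturbing $\Phi$ to $\Phi_h$ does not destroy the strict local maximum — this requires the $C^k$-convergence $\Phi_h \to \Phi$ to be uniform near $(x_0,t_0)$ and a small perturbation (adding $\varepsilon|x - x_0|^2$ and shrinking neighborhoods) to recover a genuine maximum of $u_{h_j} - \Phi_{h_j} - \psi$. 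A secondary technical point is handling times $t$ that fall strictly between partition points: since $v_h(\cdot, t) = S_h(t, t_n; W_h) v_h(\cdot, t_n)$ for $t \in (t_n, t_{n+1}]$, one needs $|\mcl P_h| \le \rho_h \to 0$ together with a mild equicontinuity-in-time estimate (itself a consequence of the barrier bounds and monotonicity) to ensure the relaxed limits are unchanged by restricting attention to partition points, and that no spurious oscillation is introduced on the sub-intervals.
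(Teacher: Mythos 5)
Your proposal follows essentially the same route as the paper's proof: Barles--Souganidis half-relaxed limits, a uniform bound via constant-in-space barriers, approximation of the test function $\Phi$ by solutions $\Phi_h$ of \eqref{E:HJpart} driven by $W_h$ with matching data at $t_0$ so that the strict maximum persists, application of the monotone scheme operator over the last partition subinterval followed by the consistency hypothesis, a separate barrier argument for the initial condition, and the comparison principle to conclude. The only slip is in your displayed inequality, where the subtracted term produced by monotonicity and \eqref{A:constantcommute} is $\Phi_{h_j}(x_j,t_j)$ rather than $\Phi_{h_j}(x_j,t_j^-)$ (the version with $t_j^-$ would not converge, since the scheme's increment contains the $H\cdot\Delta W_h$ term of order $h \gg t_j - t_j^-$); this is the form in which consistency is actually invoked in the paper, cf.\ Lemma \ref{L:consistencycheck}.
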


The proof of Theorem \ref{T:generalconvergence}, which, as in \cite{BS}, makes use of the method of half-relaxed limits, will be postponed until the end of this section. In the following sub-sections, we demonstrate its utility in a variety of contexts.

\subsection{Finite difference schemes}

Define, for $x \in \RR^d$ and $y \in \ZZ^d \backslash \{0\}$, the discrete derivatives
\begin{equation} \label{E:discretederiv}
	\begin{split}
	&D^+_{h,y} u(x) := \frac{u(x + h y) - u(x)}{h|y|}, \quad D^-_{h,y} u(x) := \frac{u(x) - u(x - h y)}{h|y|}, \\
	&\text{and} \quad D^2_{h,y}u(x) := D^+_{h,y} D^-_{h,y}u(x) = \frac{u(x+hy) + u(x - hy) - 2u(x)}{h^2|y|^2}.
	\end{split}
\end{equation}
Observe that there exists a universal constant $C > 0$ such that, if $u \in C^{1,1}(\RR^d)$, $h > 0$, and $y \in\ZZ^d \backslash \{0\}$, then
\begin{equation}
	\nor{ D^{\pm}_{h,y} u - Du \cdot \frac{y}{|y|}}{\oo} \le C \nor{D^2 u}{\oo} h, \label{E:discreteexactfirst}
\end{equation}
and, if $u \in C^2(\RR^d)$,
\begin{equation} \label{E:discreteexactsecond}
	\nor{D^2_{h,y} u - D^2 u \frac{y}{|y|} \cdot \frac{y}{|y|} }{\oo}
	\le C \sup_{|x_1 - x_2| \le h} \abs{ D^2u(x_1) - D^2 u(x_2)}.
\end{equation}
For some fixed $N \in \NN$, define
\begin{align*}
	\ZZ^d_N &:= \left\{ y \in \ZZ^d : \max_{i=1,2,\ldots, d} \abs{ y_i} \le N \right\},
	\quad
	D^{\pm}_{h,N} := \{ D^{\pm}_{h,y} \}_{y \in \ZZ^d_N \backslash\{0\}},
	\quad
	D_{h,N} := \pars{ D^+_{h,N} \; D^-_{h,N} },\\
	&\text{and} \quad
	D^2_{h,N} := \{ D^2_{h,y} \}_{y \in \ZZ^d_N \backslash\{0\}}.
\end{align*}
Then, for some given functions 
\[
	H_h \in C^{0,1}(\RR^{(2N+1)^d - 1} \times \RR^{(2N+1)^d-1} \times \RR)
\quad \text{and} \quad
	F_h \in C^{0,1}(\RR^{2N+1)^d - 1} \times \RR^{(2N+1)^d - 1} \times \RR^{(2N+1)^d-1}),
\]
the scheme operators for finite difference approximations take the form
\begin{equation} \label{E:explicitschemeop}
	S_h(t,s; \zeta)u(x) := u(x) + F_h\pars{ D^2_h u(x), D_h u(x)}(t-s) + H_h\pars{ D_h u(x), \zeta(t) - \zeta(s)}.
\end{equation}
Properties \eqref{A:constantcommute} and \eqref{A:translatecommute} are immediate, while the question of whether \eqref{E:explicitschemeop} satisfies \eqref{A:monotonicity} or \eqref{A:consistency} is reduced to routine calculations involving $F_h$ and $H_h$.

\subsubsection{Hamilton-Jacobi equations} \label{SS:HamiltonJacobi}

We first study the first-order setting, for which $F = F_h = 0$, and assume, in addition to \eqref{A:HC2}, that
\begin{equation} \label{E:quantnumericalconsistency}
	\left\{
	\begin{split}
		&D_{p,q} H_h(\cdot,\cdot,\Delta \zeta) \le C\pars{  \abs{ \Delta \zeta} + h} \quad \text{for some $C = C_L > 0$ and all $h > 0$ and $\Delta \zeta \in \RR^m$,}\\[1.2mm]
		&\text{and }H_h \pars{ p, p, \Delta \zeta} = \sum_{i=1}^m H^i(p)(\Delta \zeta)^i \quad \text{for all $h > 0$, $p \in \RR^{(2N+1)^d - 1}$, and $\Delta \zeta \in \RR^m$.}
	\end{split}
	\right.
\end{equation}
In order for monotonicity to hold, the Lipschitz bounds in \eqref{E:quantnumericalconsistency} are made more precise. Let elements of $\RR^{(2N+1)^d - 1}$ be labeled by $\left\{ p_y \right\}_{y \in \ZZ^d_N \backslash\{0\}}$, and assume that, for some $C = C_L > 0$, $\theta \in [0,1]$, and $\lambda_0 > 0$,
\begin{equation}\label{E:quantnumericalmonotonicity}
	\left\{
	\begin{split}
		&\sum_{y \in \ZZ^d_N \backslash \{0\} } \frac{1}{|y|} \pars{ \frac{\del H_h}{\del q_y} - \frac{\del H_h}{\del p_y} } \le \frac{1 - \theta}{\lambda_0} \abs{ \Delta \zeta} + \theta h \quad \text{and} \\[1.2mm]
		&\frac{\del H_h}{\del q_y} - \frac{\del H_h}{\del p_{-y}} \ge C\pars{ h - \frac{\abs{ \Delta \zeta}}{\lambda_0}} \quad \text{for all } y \in \ZZ^d_N \backslash\{0\}.
	\end{split}
	\right.
\end{equation}

\begin{lemma} \label{L:consistencycheck}
	Suppose that $H$ satisfies \eqref{A:HC2} and $H_h$ satisfies \eqref{E:quantnumericalconsistency}. Then there exists $C = C_L > 0$ such that, whenever $\zeta \in C([0,T], \RR^m)$, $\mathrm{osc}(\zeta,s,t) \le \lambda_0 h$ for some $s,t \in I$, and $\Phi \in C(I, C^{1,1}(\RR^d))$ is a solution of
	\[
		d\Phi = \sum_{i=1}^m H^i(D\Phi) \circ d\zeta^i \quad \text{in } \RR^d \times I,
	\]
	then
	\[
		\nor{S_h(t,s;\zeta)\Phi(\cdot,s) - \Phi(\cdot, t)}{\oo} \le C \nor{D^2\Phi}{\oo} h^2.
	\]
	
	If, in addition, $H_h$ satisfies \eqref{E:quantnumericalmonotonicity}, then, whenever $u_1, u_2 \in (B)UC(\RR^d)$ with $u_1 \le u_2$ and $\mathrm{osc}(\zeta,s,t) \le \lambda_0 h$,
	\[
		S_h(t,s;\zeta)u_1 \le S_h(t,s;\zeta)u_2.
	\]
\end{lemma}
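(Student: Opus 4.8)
The statement makes two distinct assertions about the explicit first-order scheme \eqref{E:explicitschemeop} (with $F_h\equiv0$) --- a local consistency estimate for a smooth-in-space solution of the Hamilton--Jacobi part, and, under the extra structural hypothesis \eqref{E:quantnumericalmonotonicity}, monotonicity --- and I would prove them one at a time. Write $\Delta\zeta:=\zeta(t)-\zeta(s)$. The only way the hypothesis $\mathrm{osc}(\zeta,s,t)\le\lambda_0 h$ is used, in either part, is through the elementary consequence $|\Delta\zeta|\le\mathrm{osc}(\zeta,s,t)\le\lambda_0 h$ (since $s,t\in I_{s,t}$); this is what converts the ``CFL-type'' error terms below into multiples of $h^2$ and makes the inequalities \eqref{E:quantnumericalmonotonicity} effective.

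\emph{Consistency estimate.} Since $F_h\equiv0$ one has $S_h(t,s;\zeta)\Phi(\cdot,s)(x)=\Phi(x,s)+H_h\bigl(D_h\Phi(\cdot,s)(x),\Delta\zeta\bigr)$, and the plan is to freeze the gradient. By \eqref{E:discreteexactfirst}, every entry of $D_h\Phi(\cdot,s)(x)$ agrees, up to an error $C\nor{D^2\Phi}{\oo}h$, with the corresponding entry of the ``gradient image'' vector whose $y$-components (in both the forward and the backward block) equal $D\Phi(x,s)\cdot\frac{y}{|y|}$; since the stencil $\ZZ^d_N$ is finite, the Lipschitz bound in the first line of \eqref{E:quantnumericalconsistency} converts this into
\[
	\Bigl|\,H_h\bigl(D_h\Phi(\cdot,s)(x),\Delta\zeta\bigr)-\textstyle\sum_{i=1}^m H^i(D\Phi(x,s))\,\Delta\zeta^i\,\Bigr|\le C\bigl(|\Delta\zeta|+h\bigr)\nor{D^2\Phi}{\oo}\,h ,
\]
where the exact-consistency identity in the second line of \eqref{E:quantnumericalconsistency} was used to evaluate $H_h$ on the gradient-image vector. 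It remains to compare $\Phi(x,t)$ with $\Phi(x,s)+\sum_i H^i(D\Phi(x,s))\,\Delta\zeta^i$, and here one uses that $\Phi$ solves \eqref{E:HJpart} on $I$: feeding the characteristic representation \eqref{E:characteristic}--\eqref{E:Zcharacteristic} (with base time $s$) into a Taylor expansion, in which the terms linear in $\Delta\zeta$ cancel identically, gives
\[
	\Bigl|\,\Phi(x,t)-\Phi(x,s)-\textstyle\sum_{i=1}^m H^i(D\Phi(x,s))\,\Delta\zeta^i\,\Bigr|\le C\,\nor{D^2\Phi}{\oo}\,\mathrm{osc}(\zeta,s,t)^2 ,
\]
with $C$ depending only on $L$ through the derivative bounds on $H$. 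Adding the two displays and using $|\Delta\zeta|\le\mathrm{osc}(\zeta,s,t)\le\lambda_0 h$ to bound both right-hand sides by $C\nor{D^2\Phi}{\oo}h^2$ yields the claim.

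\emph{Monotonicity.} The value $S_h(t,s;\zeta)u(x)$ depends on $u$ only through the numbers $w_v:=u(x+hv)$, $v\in\ZZ^d_N$; writing $S_h(t,s;\zeta)u(x)=G\bigl((w_v)_{v\in\ZZ^d_N}\bigr)$, it suffices to show $G$ is nondecreasing in each coordinate, since then $u_1\le u_2$ gives $S_hu_1(x)=G\bigl((u_1(x+hv))_v\bigr)\le G\bigl((u_2(x+hv))_v\bigr)=S_hu_2(x)$. Because $H_h\in C^{0,1}$ one differentiates $G$ almost everywhere and integrates; with $p_y$, $q_y$ labelled as in \eqref{E:quantnumericalmonotonicity}, the centre value $w_0$ enters the explicit term and every forward and backward difference with opposite signs, producing
\[
	\partial_{w_0}G = 1-\frac1h\sum_{y\in\ZZ^d_N\setminus\{0\}}\frac1{|y|}\Bigl(\tfrac{\del H_h}{\del q_y}-\tfrac{\del H_h}{\del p_y}\Bigr),
\]
whose nonnegativity is exactly the first inequality of \eqref{E:quantnumericalmonotonicity} together with $\tfrac{1-\theta}{\lambda_0}|\Delta\zeta|+\theta h\le(1-\theta)h+\theta h=h$; an off-centre value $w_a$, $a\ne0$, enters exactly one forward slot and one backward slot, producing $\partial_{w_a}G=\tfrac1{h|a|}\bigl(\tfrac{\del H_h}{\del q_a}-\tfrac{\del H_h}{\del p_{-a}}\bigr)$, whose nonnegativity, for every $a\in\ZZ^d_N\setminus\{0\}$, is exactly the second inequality of \eqref{E:quantnumericalmonotonicity} together with $h-|\Delta\zeta|/\lambda_0\ge0$. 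Hence $G$ is coordinatewise nondecreasing and $S_h(t,s;\zeta)$ is monotone.

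The step I expect to be most delicate is the Hamilton--Jacobi expansion in the consistency part: the characteristic formula \eqref{E:characteristic}--\eqref{E:Zcharacteristic} is derived naturally for $C^2$ initial data, whereas here $\Phi\in C(I,C^{1,1}(\RR^d))$, so one must either argue directly with the bi-Lipschitz characteristic map --- whose Jacobian is invertible once $\mathrm{osc}(\zeta,s,t)$ is small, with a.e.\ second derivatives controlled by $\nor{D^2\Phi}{\oo}$ --- or mollify $\Phi(\cdot,s)$, apply the $C^2$ estimate, and pass to the limit using the stability of \eqref{E:HJpart} under uniform convergence. One must also keep careful track that all constants depend on the data only through $L$ (via \eqref{A:HC2} and \eqref{A:Hlingrowth}), and be attentive to the exact signs and to the forward-versus-backward labelling convention underlying $p_y$ and $q_y$ in \eqref{E:quantnumericalmonotonicity}; that bookkeeping, though routine, is where sign errors would otherwise creep in.
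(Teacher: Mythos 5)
Your proposal is correct and follows the same route as the paper's (very terse) proof: the consistency estimate is obtained exactly as you do, by combining the second-order expansion of the spatially smooth solution with the gradient-freezing argument via \eqref{E:discreteexactfirst} and the two parts of \eqref{E:quantnumericalconsistency}, and the monotonicity is the same coordinatewise-derivative computation showing the map $\{u(x+hy)\}_{y\in\ZZ^d_N}\mapsto S_h(t,s;\zeta)u(x)$ is nondecreasing in each argument. Your explicit formulas for $\partial_{w_0}G$ and $\partial_{w_a}G$ correctly identify the labelling of $p_y,q_y$ that makes \eqref{E:quantnumericalmonotonicity} deliver monotonicity (and that is consistent with the sign of the artificial viscosity in \eqref{E:introLFfirstorder}), and your care about the $C^{1,1}$ regularity in the characteristic expansion addresses a point the paper simply asserts.
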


Motivated by the above result, the schemes for first-order equations in Sections \ref{S:pathwise} and \ref{S:regularrates}, for which we obtain explicit error estimates, will be assumed to satisfy the conclusions of Lemma \ref{L:consistencycheck}. In fact, the smoothness assumption \eqref{A:HC2} is not needed in the proof of Lemma \ref{L:consistencycheck}, and the quantitative convergence results in those sections can be proved under the more general hypotheses \eqref{A:Hdiffconvex} and \eqref{A:Hlingrowth}.

\begin{proof}[Proof of Lemma \ref{L:consistencycheck}]
Let $\Phi \in C(I, C^{1,1}(\RR^d))$ be as in the statement of the lemma. Then there exists $C > 0$ depending only on $\max_{|p| \le L} \abs{ DH(p)}$ such that, for all $s, t \in I$,
\[
	\nor{\Phi(\cdot, t) - \Phi(\cdot,s) - \sum_{i=1}^m H^i(D\Phi(\cdot,s)) \pars{ \zeta^i(t) - \zeta^i(s)} }{\oo} \le C \nor{D^2\Phi}{\oo} \abs{\zeta(t) - \zeta(s)}^2.
\]
Therefore,
\[
	\abs{ S_h(t,s; \zeta)\Phi(\cdot,s)(x) - \Phi(x,t)} \le C \nor{D^2\Phi}{\oo} \pars{ h^2 + \abs{\zeta(t) - \zeta(s)}h + \abs{\zeta(t) - \zeta(s)}^2} \le C(1 + \lambda_0 + \lambda_0^2)h^2.
\]
Meanwhile, if $\mcl S_h: \RR^{(2N+1)^d} \to \RR$ is the map implicitly defined by
\[
	\mcl S_h \pars{ \left\{u(x+y) \right\}_{y \in \ZZ^d_N} } = S_h(t,s; \zeta)u(x),
\]
then \eqref{E:quantnumericalmonotonicity} implies that $\mcl S_h$ is increasing in each of its arguments as long as $\mathrm{osc}(\zeta,s,t) \le \lambda_0 h$.
\end{proof}

We now mention two specific examples. The first is the analogue of the Lax-Friedrichs scheme for scalar conservation laws discussed in the introduction. Here, $H_h$ is defined, for some $\theta \in (0,1]$, by
\[
	H_h(p,q,\Delta \zeta) := H\pars{ \frac{p+q}{2}}\Delta \zeta + \frac{\theta h}{2d} \sum_{k=1}^d \pars{ q_k - p_k},
\]
where the vector $(p,q) \in \RR^d \times \RR^d$ stands for the discrete derivatives
\begin{align*}
	&p = \pars{ D^+_{h, e_1}, D^+_{h, e_2}, \ldots, D^+_{h, e_d} },
	\quad
	q = \pars{ D^-_{h, e_1}, D^-_{h, e_2}, \ldots, D^-_{h, e_d} }, \\
	&e_k := (0, 0, \ldots, 0, \underbrace{1}_{k}, 0, \ldots, 0) \quad \text{for } k = 1, 2, \ldots, d.
\end{align*}
A calculation verifies that \eqref{E:quantnumericalconsistency} and \eqref{E:quantnumericalmonotonicity} are satisfied with $\lambda_0 := \frac{\theta}{d \nor{DH}{\oo}}$.

If $d = 1$, the different regions of monotonicity of $H$ may be exploited to create upwind schemes. As a simple example, assume that $H \ge H(0) = 0$ and $H$ is increasing for $p > 0$ and decreasing for $p < 0$, and define
\[
	H_h(p,q,\Delta \zeta) :=
	\left[ H( p_+) + H(- q_-) \right] (\Delta \zeta)_+ - \left[ H( q_+) + H(-p_-) \right] (\Delta \zeta)_-.
\]
Then \eqref{E:quantnumericalconsistency} and \eqref{E:quantnumericalmonotonicity} hold with $\theta = 0$ and $\lambda_0 := \frac{1}{2\nor{H'}{\oo}}$.

As far as the approximating paths $W_h$ are concerned, Lemma \ref{L:consistencycheck} implies that \eqref{A:monotonicity} and \eqref{A:consistency} will hold, with $k = 2$, if $\rho_h$ and $W_h$ satisfy
\begin{equation}
	\sup_{0 \le t-s \le \rho_h} \abs{ W_h(t) - W_h(s)} \le \lambda_0 h \quad \text{and} \quad \lim_{h \to 0} \frac{h^2}{\rho_h} = 0. \label{A:extrarho}
\end{equation}
If $W_h$ is smooth, then
\[
	\sup_{0 \le t-s \le \rho_h} \abs{ W_h(t) - W_h(s)} \le \nor{\dot W_h}{\oo} \rho_h.
\]
Let $\omega: [0,\oo) \to [0,\oo)$ be the modulus of continuity for $W$. For many standard approximations of $W$, there exists some increasing function $h \mapsto \eta_h$ satisfying $\lim_{h \to 0^+} \eta_h = 0$ and some $C > 0$ such that
\begin{equation}\label{E:slowderivative}
	\nor{\dot W_h}{\oo} \le C \frac{\omega(\eta_h)}{\eta_h}.
\end{equation}
For example, $W_h$ may be the piecewise linear interpolation of $W$ with step-size $\eta_h$, or the convolution of $W$ with a standard mollifier supported in an interval of radius $\eta_h$. Then the first part of \eqref{A:extrarho} may be replaced with the slightly stronger assumption
\begin{equation} \label{A:extraextrarho}
	\frac{C\omega(\eta_h) \rho_h}{h \eta_h} \le \lambda_0.
\end{equation}
To be more explicit, suppose that $W \in C^\alpha([0,T], \RR^m)$ and, for some $\gamma > 0$, $\eta_h = (\rho_h)^\gamma$. Then \eqref{A:extraextrarho} will hold if $\rho_h$ is defined by
\[
	\lambda := \frac{C [W]_{\alpha,T}(\rho_h)^{1 - \gamma + \alpha \gamma}}{h} \le \lambda_0.
\]
This yields
\[
	\frac{h^2}{\rho_h} \approx (\rho_h)^{1 - 2 \gamma + 2\alpha \gamma},
\]
so that \eqref{A:extrarho} will be satisfied if
\[
	0 < \gamma < \frac{1}{2(1-\alpha)}.
\]
If $\alpha > \frac{1}{2}$, then $\gamma$ is allowed to be $1$, and in particular, it is natural to define $W_h$ to be the piecewise linear interpolation of $W$ on a partition of step-size $\eta_h = \rho_h$. Notice also that paths in $C^\alpha$ for such $\alpha$ have quadratic variation equal to $0$.

However, for $\alpha \le \frac{1}{2}$, $\gamma$ is forced to be less than $1$, and so we must make $W_h$ a milder approximation. The work in the subsequent sections suggests that choosing $\gamma = \frac{1}{2}$ gives the best rate of convergence regardless of the regularity of the path $W$.

\subsubsection{A second order example}

Verifying \eqref{A:monotonicity} and \eqref{A:consistency} is more complicated for finite difference approximations of second order equations. Rather than stating very general assumptions on $F_h$ or $H_h$, we perform these calculations for a specific scheme. More examples can be formed by adapting the results of \cite{KT1, KT2}.

Assume for simplicity that $d = 1$, $H \in C^3(\RR, \RR^m)$, and that $F$ depends only on $u_{xx}$, and define, for some $\epsilon _h > 0$,
\[
	H_h(p,q,\Delta \zeta) := H\pars{ \frac{p+q}{2}}\Delta \zeta \quad \text{and} \quad F_h(X) = F(X) + \epsilon_h X \quad \text{for} \quad X = D^2_{h,1} u \text{ and } (p,q) = D_{h,1} u.
\]
Note that the ellipticity condition \eqref{A:F} means that $F$ is increasing, and so a routine calculation shows that $S_h$, $W_h$, and $\rho_h$ satisfy \eqref{A:monotonicity} if
\[
	\rho_h := \lambda h^2 \quad \text{with} \quad \lambda \le \frac{1}{2 \nor{F'}{\oo}}
\]
and
\begin{equation}\label{E:LFLipschitzpath}
	\nor{\dot W_h}{\oo} \le \frac{2}{\nor{H'}{\oo}} \cdot \frac{\epsilon_h}{h}.
\end{equation}
Now let $\Phi_h \in C(I, C^3(\RR))$ and $\phi \in C^3(\RR)$ be as in \eqref{A:consistency}. Observe that it is possible to find such a solution because of the added regularity for $H$, and that
\[
	\sup_{h > 0} \pars{ \nor{\Phi_{h,xx}}{\oo} + \nor{\Phi_{h,xxx}}{\oo} } < \oo.
\]
Then, for some $C > 0$ depending only on $\nor{H'}{\oo}$, and for all $\rho \in (0, \lambda h^2)$,
\begin{align*}
	&\abs{ \Phi_h(x,t + \rho) - \Phi_h(x,t) - \sum_{i=1}^m H^i(\Phi_{h,x}(x,t))(W_h(t+\rho) - W_h(t)) } \\
	&\le C \pars{ \nor{\Phi_{h,xx}}{\oo} \max_{t \le s \le t + \rho} \abs{ W_h(s) - W_h(t)}^2 } \le C \lambda \nor{\Phi_{h,xx}}{\oo} (\epsilon_h)^2 \rho.
\end{align*}
The estimates \eqref{E:discreteexactfirst} and \eqref{E:discreteexactsecond} then imply that, for $s_h$ and $t_h$ as in \eqref{A:consistency},
\begin{align*}
	&\abs{S_h(t_h,s_h; W_h)\Phi_h(\cdot,s_h)(x) - \Phi_h(x,t_h) - (t_h - s_h) F(\phi_{xx}(x,t)) }\\
	&\le C\rho_h \cdot \pars{ \nor{\Phi_{h,xx}}{\oo} \epsilon_h+ \nor{\Phi_{h,xxx}}{\oo} h + \nor{\Phi_{h,xx}(\cdot,s_h) - \phi_{xx}}{\oo}},
\end{align*}
and so \eqref{A:consistency} holds if $\lim_{h \to 0} \epsilon_h = 0$. This, in turn, requires that
\[
	\lim_{h \to 0} h \nor{\dot W_h}{\oo} = 0,
\]
or that $W_h$ satisfies \eqref{E:slowderivative} with $\eta_h$ such that
\[
	\lim_{h \to 0} \frac{h\omega(\eta_h)}{\eta_h} = 0.
\]
Taking $W \in C^\alpha([0,T], \RR^m)$ and $\eta_h = (\rho_h)^\gamma = \lambda^\gamma h^{2\gamma}$ for some $\gamma > 0$ as a concrete example, this leads once more to the restriction
\[
	0 < \gamma < \frac{1}{2(1-\alpha)}.
\]

\subsection{Other approximations}

\subsubsection{Stability for \eqref{E:eq}}

The proof of Theorem \ref{T:generalconvergence} is a generalization of the argument that \eqref{E:eq} is stable with respect to perturbations in the data. In fact, Theorem \ref{T:generalconvergence} recovers these stability properties.

Suppose that
\begin{equation} \label{A:epsilonfamilies}
	\left\{
	\begin{split}
		&u_0^\epsilon \in C^{0,1}(\RR^d), \quad W^\epsilon, W \in C([0,T]; \RR^m), \quad H^\epsilon, H \in C^2(\RR^d; \RR^m), \quad F^\epsilon, F \text{ satisfy \eqref{A:F}}, \\[1.2mm]
		&\text{and } \lim_{\epsilon \to 0} \pars{ \nor{u_0^\epsilon - u_0}{\oo},  \nor{W^\epsilon - W}{\oo},  \nor{H^\epsilon - H}{C^2}, \nor{F^\epsilon - F}{\oo} } = 0,
	\end{split}
	\right.
\end{equation}
and let $u^\epsilon \in BUC(\RR^d \times [0,T])$ be the unique solution of
\begin{equation}\label{E:perturbedeq}
	du^\epsilon = F^\epsilon(D^2u^\epsilon, Du^\epsilon)\;dt + \sum_{i=1}^m H^{i, \epsilon}(Du^\epsilon) \circ dW^{i, \epsilon} \quad \text{in } \RR^d \times (0,T] \quad \text{and} \quad u^\epsilon(\cdot, 0) = u^\epsilon_0 \quad \text{on } \RR^d.
\end{equation}

\begin{theorem}\label{T:stability}
	Assume \eqref{A:epsilonfamilies} and let $u^\epsilon$ and $u$ solve respectively \eqref{E:perturbedeq} and \eqref{E:eq}. Then, as $\epsilon \to 0$, $u^\epsilon$ converges locally uniformly to $u$.
\end{theorem}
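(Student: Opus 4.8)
The plan is to run the method of half-relaxed limits directly on the family $\{u^\epsilon\}_{\epsilon>0}$; this is the prototype of the argument used to prove Theorem \ref{T:generalconvergence} (indeed one could instead regard the exact solution operator of \eqref{E:perturbedeq} as a scheme operator and invoke that theorem), but the direct route is cleaner. First I would record the a priori bounds. Comparison with spatially constant solutions of \eqref{E:perturbedeq} gives $\sup_{\epsilon}\nor{u^\epsilon}{L^\infty(\RR^d\times[0,T])}<\infty$, the bound being uniform since $u_0^\epsilon\to u_0$, $F^\epsilon\to F$, $H^\epsilon\to H$ and $W^\epsilon\to W$ in the stated norms; and, since $u_0^\epsilon\to u_0$ in $BUC(\RR^d)$ and the $H^{i,\epsilon}$ are spatially homogeneous, the $u^\epsilon$ retain a common spatial modulus of continuity (cf. the discussion around \eqref{A:initialcondition}). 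With this local equiboundedness, set $\overline u:=\limsup^{*}_{\epsilon\to0}u^\epsilon$ and $\underline u:=\liminf_{*,\,\epsilon\to0}u^\epsilon$, which are respectively USC and LSC, finite, satisfy $\underline u\le\overline u$, and satisfy $\overline u(\cdot,0)\le u_0\le\underline u(\cdot,0)$ because $u_0^\epsilon\to u_0$ uniformly.

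The core of the argument is to show that $\overline u$ is a pathwise viscosity sub-solution and $\underline u$ a pathwise viscosity super-solution of \eqref{E:eq} in the sense of Definition \ref{D:nonsmoothH}; I treat the sub-solution case, the other being symmetric. Fix an interval $I\subset[0,T]$, a time $t_0\in I$, and a solution $\Phi\in C(I,C^{1,1}(\RR^d))$ of $d\Phi=\sum_i H^i(D\Phi)\circ dW^i$; shrinking $I$ if necessary, assume the invertibility bound $\sup_{t\in I}\abs{W(t)-W(t_0)}<(\nor{D^2H}{\oo}\nor{D^2\Phi(\cdot,t_0)}{\oo})^{-1}$ holds strictly, and (as in the classical theory, it suffices to consider such test solutions) that $\Phi(\cdot,t_0)$ is coercive. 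For each $\epsilon$ let $\Phi^\epsilon\in C(I_\epsilon,C^{1,1}(\RR^d))$ solve $d\Phi^\epsilon=\sum_i H^{i,\epsilon}(D\Phi^\epsilon)\circ dW^{i,\epsilon}$ with $\Phi^\epsilon(\cdot,t_0)=\Phi(\cdot,t_0)$. Since $u^\epsilon$ solves \eqref{E:perturbedeq}, Definition \ref{D:nonsmoothH} tells us that $v^\epsilon(\xi,t):=\sup_x\{u^\epsilon(x,t)-\Phi^\epsilon(x-\xi,t)\}$ is a classical viscosity sub-solution of $v^\epsilon_t=F^\epsilon(D^2v^\epsilon,Dv^\epsilon)$ on $\RR^d\times I_\epsilon$. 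Now: (i) because $H^{i,\epsilon}\to H^i$ in $C^2$ and $W^\epsilon\to W$ uniformly, the characteristic representation \eqref{E:characteristic}--\eqref{E:Zcharacteristic} shows that $I_\epsilon\supset I'$ for any $I'\Subset I$ once $\epsilon$ is small, that $\Phi^\epsilon\to\Phi$ locally uniformly, and that the $\Phi^\epsilon$ are locally uniformly $C^{1,1}$ in $x$; (ii) combining this with $u^\epsilon\to\overline u$ in the relaxed sense and the coercivity of $\Phi^\epsilon$ (which confines the maximizing $x$ to a fixed compact set on compacts in $(\xi,t)$), the standard lemma on relaxed limits of sup-convolutions gives $\limsup^{*}v^\epsilon=v$ with $v(\xi,t):=\sup_x\{\overline u(x,t)-\Phi(x-\xi,t)\}$; (iii) $F^\epsilon\to F$ locally uniformly. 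The standard fact that a relaxed upper limit of classical viscosity sub-solutions, with the equations converging locally uniformly, is a classical viscosity sub-solution of the limiting equation (the sub-solution counterpart of the stability recalled around \eqref{E:nequation}--\eqref{E:initialconv}) then yields that $v$ is a classical viscosity sub-solution of $v_t=F(D^2v,Dv)$ on $\RR^d\times I$. By Definition \ref{D:nonsmoothH}, $\overline u$ is a pathwise viscosity sub-solution of \eqref{E:eq}.

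To conclude, the comparison principle \eqref{E:comparison} applied to the sub-solution $\overline u$ and super-solution $\underline u$, together with $\overline u(\cdot,0)\le u_0\le\underline u(\cdot,0)$, gives $\overline u\le\underline u$ on $\RR^d\times[0,T]$; since also $\underline u\le\overline u$, the two relaxed limits coincide and equal the (continuous) pathwise viscosity solution $u$ of \eqref{E:eq}. The equality $\overline u=\underline u$ upgrades to locally uniform convergence $u^\epsilon\to u$ by the usual compactness argument.

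The step I expect to require the most care is (i): controlling the solutions $\Phi^\epsilon$ of the localized Hamilton--Jacobi parts under the simultaneous $C^2$-perturbation of the Hamiltonian and uniform perturbation of the driving path, and in particular checking that the existence interval $I_\epsilon$ does not degenerate. Both rest on the explicit characteristic construction of Section \ref{S:definition} and on the fact that the invertibility condition for \eqref{E:characteristic} is open. A secondary point is the growth bookkeeping in (ii), handled by the reduction to coercive test solutions $\Phi(\cdot,t_0)$ (or by a truncation exploiting the uniform Lipschitz bound on $\{u^\epsilon\}$).
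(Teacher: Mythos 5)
Your argument is correct in outline, but it takes the opposite route from the paper: you unfold the half-relaxed-limit machinery and run it directly on the exact solutions $u^\epsilon$, whereas the paper does exactly what you mention parenthetically at the start. It regards the exact solution operator $S_\epsilon(t,s;\zeta)$ of \eqref{E:perturbedeq} as a scheme operator, observes that \eqref{A:constantcommute}, \eqref{A:translatecommute} and \eqref{A:monotonicity} are immediate from the comparison principle (with $\rho_h$ arbitrary), verifies the consistency hypothesis \eqref{A:consistency} by constructing, via characteristics, solutions $\Phi^\epsilon$ of the perturbed Hamilton--Jacobi part converging in $C(I,C^2(\RR^d))$ to $\Phi$, and then invokes Theorem \ref{T:generalconvergence}; it also first reduces to $u_0^\epsilon=u_0$ by contractivity of the solution operator, which disposes of the initial-data discussion. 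Your step (i) --- uniform control of $\Phi^\epsilon$ and non-degeneracy of $I_\epsilon$ under simultaneous perturbation of $H$ and $W$ --- is precisely the content of the paper's consistency check, so the two proofs share their only substantive ingredient. What the direct route buys is independence from the scheme formalism and the option of working with Definition \ref{D:nonsmoothH} (only locally uniform $C^{1,1}$ bounds on $\Phi^\epsilon$ are needed rather than $C^2$ convergence); what the paper's route buys is brevity, since the half-relaxed-limit work has already been done once in the proof of Theorem \ref{T:generalconvergence}.

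One point to tighten: the chain $\overline u(\cdot,0)\le u_0\le \underline u(\cdot,0)$ does not follow ``because $u_0^\epsilon\to u_0$ uniformly.'' Uniform convergence of the data gives only the easy inequalities $\overline u(\cdot,0)\ge u_0\ge \underline u(\cdot,0)$; the relaxed upper limit at $t=0$ involves a $\limsup$ over times $s>0$, so a priori $\overline u(x,0)$ could exceed $u_0(x)$. Attaching the initial data requires a barrier argument (compare Step 3 of the proof of Theorem \ref{T:generalconvergence}) or a time-equicontinuity estimate for $\{u^\epsilon\}$ near $t=0$, e.g.\ by comparing $u^\epsilon$ with $\Phi^\epsilon(\cdot,t)\pm Ct$ for smooth solutions $\Phi^\epsilon$ of the perturbed Hamilton--Jacobi part starting above and below $u_0^\epsilon$. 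This is routine for exact solutions, but it is a step that must be carried out, and it is needed before you can apply the comparison principle \eqref{E:comparison} to $\overline u$ and $\underline u$ in your final paragraph.
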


\begin{proof}
	The comparison principle implies that the solution operator for \eqref{E:perturbedeq} is contractive, and therefore, it suffices to assume that $u_0^\epsilon = u_0$ for all $\epsilon > 0$.
	
	For $s \le t$, $\zeta \in C([0,T]; \RR^m)$, and $h > 0$, let $S_\epsilon(t,s; \zeta): BUC(\RR^d) \to BUC(\RR^d)$ be the solution operator for \eqref{E:perturbedeq} driven by the path $\zeta$ instead of $W^\epsilon$. Properties \eqref{A:constantcommute} and \eqref{A:translatecommute} are readily verified, and, letting $\rho_h = \rho_\epsilon$ be arbitrary and setting $W_h = W^\epsilon$, \eqref{A:monotonicity} follows immediately from the comparison principle. 
	
	Finally, in view of the uniform bound for $D^2 H^\epsilon$, for any interval $I \subset [0,T]$ and solution $\Phi \in C(I, C^2(\RR^d))$ of \eqref{E:HJpart}, there exists a family of solutions $\Phi^\epsilon \in C(I, C^2(\RR^d))$ solving \eqref{E:HJpart} with the Hamiltonian $H^\epsilon$ and path $W^\epsilon$, converging in $C(I, C^2(\RR^d))$ to $\Phi$ as $\epsilon \to 0$. This can be seen using the method of characteristics, as in Section \ref{S:definition}. Therefore, \eqref{A:consistency} is a consequence of Definition \ref{D:smoothH} and the local uniform convergence of $F^\epsilon$ to $F$. Theorem \ref{T:generalconvergence} now gives the result.	
\end{proof}

\subsubsection{A mixing formula}
It is also possible to derive general Trotter-Kato type mixing formulas for \eqref{E:eq}. Here, we present a specific example. A different approach to the following can be found in the work of Gassiat and Gess \cite{GG}.

Assume, in addition to \eqref{A:F}, that 
\[
	F \in C^{1,1}(S^d \times \RR^d) \quad \text{and} \quad H \in C^4(\RR^d, \RR^m),
\]
and, for $\zeta \in C([0,T], \RR^m)$, let $S_F(t): BUC(\RR^d) \to BUC(\RR^d)$ and $S_H(t,s; \zeta): BUC(\RR^d) \to BUC(\RR^d)$ be the solution operators for respectively
\[
	u_t = F(D^2 u, Du) \quad \text{and} \quad du = \sum_{i=1}^m H^i(Du) \circ d\zeta^i.
\] 
Define
\[
	S_h(t,s;\zeta) = S_F(t-s) S_H(t,s; \zeta).
\]

\begin{theorem}
	For any sequence of approximating paths $\{W_h\}_{h > 0}$ and modulus $h \to \rho_h$ satisfying \eqref{A:approxpath}, the triple $(S_h, W_h, \rho_h)$ satisfies \eqref{A:constantcommute} - \eqref{A:consistency}.
\end{theorem}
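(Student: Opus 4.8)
The plan is to exploit the factorization $S_h(t,s;\zeta)=S_F(t-s)\circ S_H(t,s;\zeta)$ into two \emph{exact} solution operators, so that each of \eqref{A:constantcommute}--\eqref{A:consistency} is inherited from a corresponding property of the classical flow of $u_t=F(D^2u,Du)$ and of the pathwise Hamilton--Jacobi flow. The key point is that $S_H$, being exact, will absorb the test solution $\Phi_h$ in the consistency check, leaving only the deterministic flow $S_F$ to analyze; as a consequence, no compatibility (CFL) relation between $W_h$ and $\rho_h$ is ever needed, which is why $\rho_h$ may be left arbitrary subject only to \eqref{A:approxpath}.

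Properties \eqref{A:constantcommute} and \eqref{A:translatecommute} are immediate: both equations $u_t=F(D^2u,Du)$ and $du=\sum_iH^i(Du)\circ d\zeta^i$ involve $u$ only through its derivatives and carry no explicit spatial dependence, so each of $S_F(t-s)$ and $S_H(t,s;\zeta)$ commutes with the addition of constants and with spatial translations, and hence so does their composition. For \eqref{A:monotonicity}, the operator $S_H(t,s;W_h)$ is monotone by the comparison principle \eqref{E:comparison} (applied with $F\equiv0$, equivalently by the contractivity of the pathwise Hamilton--Jacobi solution operator), while $S_F(t-s)$ is monotone by the comparison principle \eqref{E:introcomparison} for $u_t=F(D^2u,Du)$, which holds since $F$ is degenerate elliptic; a composition of monotone maps is monotone, and nothing here restricts $t-s$.

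The substance is \eqref{A:consistency}. Let $\Phi_h\in C(I,C^k(\RR^d))$ solve $d\Phi_h=\sum_iH^i(D\Phi_h)\circ dW^i_h$ on $\RR^d\times I$, let $s_h,t_h\in I$ with $0\le t_h-s_h\le\rho_h$, and assume $\Phi_h(\cdot,s_h)\to\phi$ in $C^k$. Since $\Phi_h$ solves the Hamilton--Jacobi part driven by $W_h$, uniqueness gives $S_H(t_h,s_h;W_h)\Phi_h(\cdot,s_h)=\Phi_h(\cdot,t_h)$, whence
\[
	S_h(t_h,s_h;W_h)\Phi_h(\cdot,s_h)=S_F(t_h-s_h)\,\Phi_h(\cdot,t_h).
\]
It therefore suffices to combine two facts. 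First, $\Phi_h(\cdot,t_h)\to\phi$ in $C^2(\RR^d)$ with $\sup_h\max_{2\le j\le k}\nor{D^j\Phi_h(\cdot,t_h)}{\oo}<\infty$: indeed, \eqref{A:approxpath} and the uniform continuity of $W$ force $\abs{W_h(t_h)-W_h(s_h)}\le 2\nor{W_h-W}{\oo}+\omega_W(\rho_h)\to0$ (with $\omega_W$ the modulus of continuity of $W$), and the characteristic representation of Section \ref{S:definition} expresses $\Phi_h(\cdot,t_h)$ through $\Phi_h(\cdot,s_h)$ and this vanishing increment, with derivative bounds depending only on $\nor{D^2\Phi_h(\cdot,s_h)}{\oo},\dots,\nor{D^k\Phi_h(\cdot,s_h)}{\oo}$ and on $H$; this is where $H\in C^4$ is used, namely to produce the required $C^k$ ($k=4$) test solutions. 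Second, the exact flow $S_F$ is consistent with its generator: for $\psi$ in a bounded subset of $C^k(\RR^d)$, $k\ge2$,
\[
	S_F(\tau)\psi=\psi+\tau\,F(D^2\psi,D\psi)+o(\tau)\qquad(\tau\to0^+),
\]
uniformly, because $\psi(\cdot)\pm\tau\bigl(F(D^2\psi,D\psi)+\delta\bigr)$ is a super-/sub-solution of $u_t=F(D^2u,Du)$ on $[0,\tau]$ once $\delta=\delta(R)\to0$ — here $F\in C^{1,1}$ and $\psi\in C^4$ control the $x$-derivatives of $x\mapsto F(D^2\psi(x),D\psi(x))$ — and the comparison principle traps $S_F(\tau)\psi$ between these barriers. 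Together with the Lipschitz continuity of $F$, these give
\[
	\frac{S_h(t_h,s_h;W_h)\Phi_h(\cdot,s_h)(x)-\Phi_h(x,t_h)}{t_h-s_h}\xrightarrow{h\to 0}F(D^2\phi(x),D\phi(x))
\]
uniformly over $x\in\RR^d$ and $\max_{2\le j\le k}\nor{D^j\phi}{\oo}\le R$, which is exactly the form of \eqref{A:consistency} used in the proof of Theorem \ref{T:generalconvergence} (compare the second-order finite-difference example above); that theorem then yields the locally uniform convergence of the Trotter--Kato scheme $u_h$ to the solution of \eqref{E:eq}.

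I expect the verifications of \eqref{A:constantcommute}, \eqref{A:translatecommute}, and \eqref{A:monotonicity} to be entirely routine, and the remaining — still mild — difficulty to lie in making every estimate in the consistency step uniform in $h$: one must ensure that the characteristic-based $C^k$-bounds for the Hamilton--Jacobi flow near the identity and the $o(\tau)$-consistency of $S_F$ hold uniformly over the admissible family $\{\phi:\max_{2\le j\le k}\nor{D^j\phi}{\oo}\le R\}$, whose members need not have bounded gradient — which is precisely what the $x$-independence of $H$ and $F$ (the translation invariance of the equations) furnishes.
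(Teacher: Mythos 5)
Your proposal is correct and follows essentially the same route as the paper: the factorization $S_h = S_F(t-s)\circ S_H(t,s;\zeta)$ lets the exact Hamilton--Jacobi flow absorb $\Phi_h(\cdot,s_h)$ into $\Phi_h(\cdot,t_h)$, after which consistency reduces to the barrier/comparison argument for $S_F$ applied to $\phi_h := \Phi_h(\cdot,t_h)$, uniformly over bounded sets in $C^4$. The only difference is presentational — you spell out via characteristics why $\phi_h\to\phi$ in $C^2$ with uniform $C^4$ bounds, which the paper asserts directly.
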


\begin{proof}
	Properties \eqref{A:constantcommute} - \eqref{A:monotonicity} are immediate from the definitions of the above objects. Let $I \subset [0,T]$, $s_h, t_h \in I$, $\Phi_h \in C(I; C^4(\RR^d))$, and $\phi \in C^4(\RR^d)$ be as in \eqref{A:consistency}. Such a solution $\Phi$ exists in view of the additional regularity assumed for $H$.
	
	For any $x \in \RR^d$,
	\[
		S_h(t_h, s_h; W_h) \Phi_h(\cdot, s_h)(x) - \Phi_h(x,t_h)
		= S_F(t_h - s_h)\Phi_h(\cdot,t_h)(x) - \Phi_h(x,t_h).
	\]
	Define $\phi_h := \Phi_h(\cdot,t_h)$, which satisfies
	\[
		R := \sup_{h > 0} \nor{\phi_h}{C^4(\RR^d)} < \oo \quad \text{and} \quad \lim_{h\to 0} \nor{\phi_h - \phi}{C^2(\RR^d)} = 0,
	\]
	and let
	\[
		z_h(x,t) := \phi_h(x) + t F(D^2\phi_h(x), D\phi_h(x)).
	\]
	Then, for some universal constant $C > 0$, $z_h$ is a viscosity super-solution of
	\[
		z_{h,t} \ge F(D^2 z_h, Dz_h) - C \nor{F}{C^{1,1}(\RR^d)} R \rho_h \quad \text{in } \RR^d \times [0,\rho_h],
	\]
	so that, for all $\rho \in (0, \rho_h)$,
	\[
		\sup_{x \in \RR^d} \pars{S_F(\rho)\phi_h(x) - z_h(x,\rho) } \le C \nor{F}{C^{1,1}(\RR^d)} R\rho_h \rho.
	\]
	A similar argument, using that $z_h$ satisfies an analogous viscosity sub-solution property, gives a lower bound, whence
	\[
		\abs{S_F(t_h - s_h)\phi_h(x) - \phi_h(x) - (t_h - s_h) F(D^2 \phi_h(x), D\phi_h(x)) } \le C \nor{F}{C^{1,1}(\RR^d)} R\rho_h (t_h - s_h).
	\]
	Property \eqref{A:consistency} now follows, with $k = 4$, from the fact that
	\[
		\lim_{h \to 0} F(D^2 \phi_h, D\phi_h) = F(D^2 \phi, D\phi) \quad \text{uniformly.}
	\]
	\end{proof}

\subsection{The proof of Theorem \ref{T:generalconvergence} }

Define
\[
	u^*(x,t) = \limsup_{h \to 0, (y,s) \to (x,t)} u_h(y,s)
	\quad \text{and} \quad
	u_*(x,t) = \liminf_{h \to 0, (y,s) \to (x,t)} u_h(y,s).
\]
The functions $u^*$ and $u_*$, called the half-relaxed limits of $u_h$, are respectively upper- and lower- semicontinuous. Furthermore, $u_* \le u^*$ on $\RR^d \times [0,T]$ and $u_*(\cdot,0) \le u_0 \le u^*(\cdot,0)$ on $\RR^d$. The goal will be to show that $u_* = u^*$, which yields the local uniform convergence of $u_h$ and the fact that the limit $u$ solves \eqref{E:eq}.

{\it Step 1: Finiteness of $u^*$ and $u_*$.} Observe that, for any constant $k \in \RR$, the function
\[
	\Phi_h(x,t) = k + \sum_{i=1}^m H^i(0)W_h^i(t)
\]
is a smooth solution of \eqref{E:HJpart} for all $(x,t) \in \RR^d \times [0,T]$. Therefore, in view of \eqref{A:monotonicity} and \eqref{A:consistency}, 
\[
	u_h(x,t) \le \nor{u_0}{\oo} + \sum_{i=1}^m H^i(0) W^i_h(t) + T(F(0,0) + 1)
\]
for all sufficiently small $h > 0$, and so $u^*(x,t) < \oo$ for all $(x,t) \in \RR^d \times [0,T]$. A similar argument gives $u_* > -\oo$.

{\it Step 2: The solution inequalities.} In this step, we demonstrate that $u^*$ and $u_*$ satisfy respectively the sub- and super-solution properties in Definition \ref{D:smoothH} for equation \eqref{E:eq}. Only the argument for $u^*$ is presented, since the proof for $u_*$ is similar.
	
Assume that $(x_0,t_0) \in \RR^d \times (0,T]$, $I \ni t_0$, $\psi \in C^1([0,T])$, $\Phi \in C(I, C^k(\RR^d))$ solves \eqref{E:HJpart} with
\[
	\max_{j = 2, 3, \ldots, k} \sup_{t \in I} \nor{D^j \Phi(\cdot,t)}{\oo} < \oo,
\]
and $u^*(x,t) - \Phi(x,t) - \psi(t)$ attains a local maximum at $(x_0,t_0)$. As discussed in Section \ref{S:definition}, it may be assumed that this maximum is strict in $\RR^d \times I$, and that
\begin{equation}
	\lim_{|x| \to +\oo} \frac{\Phi(x,t)}{|x|} = +\oo \qquad \text{uniformly for $t \in I$}. \label{E:quadgrowth}
\end{equation}
	
The definition of $u^*$ implies that there exist $y_h \in \RR^d$ and $s_h \in [0,T]$ such that
\[
	\lim_{h \to 0} \pars{ y_h, s_h, u_h(y_h,s_h) } = (x_0, t_0, u^*(x_0,t_0)).
\]

The method of characteristics and the fact that $\lim_{h \to 0} \nor{W_h - W}{\oo} = 0$ yield the existence of a subinterval of $I$ containing $t_0$, relabeled as $I$ for convenience, such that, for all $h > 0$, there exists a solution $\Phi_h \in C(I, C^k(\RR^d))$ of
\[
	d\Phi_h = \sum_{i=1}^m H^i(D\Phi_h) \circ dW_h \quad \text{in } \RR^d \times I \quad \text{and} \quad \Phi_h(\cdot,t_0) = \Phi(\cdot,t_0) \quad \text{in } \RR^d
\]
that satisfies \eqref{E:quadgrowth} uniformly in $h$, and $\Phi_h$ converegs to $\Phi$ in $C(I, C^k(\RR^d))$ as $h \to 0$. It follows that
\[
	u_h(x,t) - \Phi_h(x,t) - \psi(t)
\]
attains a global maximum at $(\hat y_h, \hat s_h)$ over $\RR^d \times \oline{I}$ such that $\{\hat y_h\}_{h > 0}$ is bounded. This gives, in particular,
\[
	u_h(y_h,s_h) - \Phi_h(y_h,s_h) - \psi(s_h)
	\le u_h(\hat y_h, \hat s_h) - \Phi_h(\hat y_h, \hat s_h) - \psi(\hat s_h).
\]
Let $(\hat x, \hat t)$ be a limit point of the sequence $\{ (\hat y_h, \hat s_h) \}_{h > 0}$. Taking $h \to 0$ along the appropriate subsequence above results in the inequality
\[
	u^*(x_0, t_0) - \Phi(x_0, t_0) - \psi(t_0) \le u^*(\hat x, \hat t) - \Phi(\hat x, \hat t) - \psi(\hat t).
\]
The strictness of the original maximum then implies that $\lim_{h \to 0} (\hat y_h, \hat s_h) = (x_0,t_0)$. 

Because $\abs{ \mcl P_h} \le \rho_h \xrightarrow{h \to 0} 0$, it follows that, for sufficiently small $h$, there exists $t_n \in \mcl P_h$ such that $t_n < \hat s_h \le t_{n+1}$ and $t_n \in I$. Then, for all $x \in \RR^d$,
\begin{equation}\label{E:preschemeineq}
	u_h(x,t_n) \le u_h(\hat y_h, \hat s_h) + \Phi_h(x,t_n) - \Phi_h( \hat y_h, \hat s_h) + \psi(t_n) - \psi(\hat s_h). 
\end{equation}
Applying the operator $S_h(\hat s_h, t_n; W_h)$ to both sides of \eqref{E:preschemeineq}, using \eqref{A:monotonicity} and the fact that $0 < \hat s_h - t_n \le \rho_h$, and rearranging terms yields
\[
	\frac{\psi(\hat s_h) - \psi(t_n)}{\hat s_h - t_n} \le \frac{ S_h(\hat s_h, t_n; W_h)\Phi_h(\cdot,t_n)(\hat y_h) - \Phi_h(\hat y_h, \hat s_h)}{\hat s_h - t_n}.
\]
Sending $h \to 0$ and using \eqref{A:consistency} gives $\psi'(t_0) \le F(D^2 \Phi(x_0,t_0), D\Phi(x_0,t_0))$, as desired.
	
{\it Step 3: Initial data.} We now prove that $u^*(x,0) = u_0(x) = u_*(x,0)$. Only the first equality is considered, and since $u^*(x,0) \ge u_0(x)$, it suffices to show that $u^*(x,0) \le u_0(x)$.
	
Let $\phi \in C^k(\RR^d)$ be such that
\[
	R := \max_{j = 2, 3, \ldots, k} \nor{D^j \phi}{\oo} < \oo
\]
and $u_0 \le \phi$ on $\RR^d$, and let $I \ni 0$ and $\Phi \in C(I, C^k(\RR^d))$ be a solution of \eqref{E:HJpart} with $\Phi(\cdot,0) = \phi$. Define $\phi_h \in C^{0,1}(\RR^d \times [0,T])$ as in \eqref{E:approxsolution} with initial condition $\phi_h(\cdot,0) = \phi$, path $W_h$, and partition $\mcl P_h$. Then \eqref{A:monotonicity} and \eqref{A:consistency} yield, for some $C > 0$ depending only on $R$ and $\nor{DF}{\oo}$, and for any $(y,s) \in \RR^d \times I$ and sufficiently small $h$,
\[
	u_h(y,s) \le \phi_h(y,s) \le \Phi(y,s) + Cs.
\]
Sending $(y,s) \to (x,0)$ and $h \to 0$, this becomes $u^*(x,0) \le \phi(x)$, completing the argument since $\phi$ was arbitrary.
	
{\it Step 4: The comparison principle.} In view of the comparison principle \eqref{E:comparison}, $u^*(x,t) \le u_*(x,t)$ for all $(x,t) \in \RR^d \times [0,T]$. Therefore $u^* = u_*$, and the result is proved.

\section{The pathwise estimate} \label{S:pathwise}

The remaining sections focus on deriving quantitative error estimates for schemes in the first-order setting. We will henceforth always assume that $H$ satisfies \eqref{A:Hdiffconvex} and $u_0$ satisfies \eqref{A:initialcondition} (and thus, without loss of generality, $H$ satisfies \eqref{A:Hlingrowth}). Also, in addition to \eqref{A:constantcommute} and \eqref{A:translatecommute}, the schemes in this part of the paper will be required to satisfy the following quantitative versions of \eqref{A:monotonicity} and \eqref{A:consistency}: for some $\lambda_0 > 0$,
\begin{equation}\label{A:quantmonotonicity}
	\text{if } u_1 \le u_2 \text{ and } \mathrm{osc}(\zeta,s,t) \le \lambda_0 h, \quad \text{then } S_h(t,s; \zeta)u_1 \le S_h(t,s; \zeta)u_2,
\end{equation}
and
\begin{equation}\label{A:quantconsistency}
	\left\{
	\begin{split}
	&\text{there exists $C = C_L > 0$ such that, if $\zeta \in C([0,T], \RR^m)$, $\Phi \in C(I, C^{1,1}(\RR^d))$}\\[1.2mm]
	&\text{is a solution of $d\Phi = \sum_{i=1}^m H^i(D\Phi) \circ d \zeta^i$ in $\RR^d \times I$, and $\mathrm{osc}(\zeta,s,t) \le \lambda_0 h$, then} \\[1.2mm]
	&\nor{S_h(t,s; \zeta)\Phi(\cdot,s) - \Phi(\cdot,t)}{\oo} \le C \nor{D^2\Phi}{\oo}h^2.
	\end{split}
	\right.
\end{equation}
This is motivated by the properties obtained in Lemma \ref{L:consistencycheck} for the finite difference approximations discussed in subsection \ref{SS:HamiltonJacobi}.

Fix a partition
\[
	\mcl P = \left\{0 = t_0 < t_1 < t_2 < \cdots < t_N = T\right\}
\]
of $[0,T]$, set $(\Delta t)_n := t_{n+1} - t_n$, and let $\zeta: [0,T] \to \RR^m$ be any continuous path satisfying
\begin{equation}\label{A:pathrestrict}
	\left\{
	\begin{split}
		&\zeta(0) = 0, \quad \text{$\zeta$ is affine on $[t_n,t_{n+1}]$ for every $n = 0, 1, 2, \ldots, N-1$, and}  \\[1.2mm]
		&\max_{n=0,1,2,\ldots,N-1} \abs{ \zeta(t_{n+1}) - \zeta(t_n)} \le \lambda_0 h.
	\end{split}
	\right.
\end{equation}
In this section, we obtain an estimate for the error between the viscosity solution $v$ of
\begin{equation}\label{E:zetaeq}
	v_t = \sum_{i=1}^m H^i(Dv)\dot \zeta^i(t) \quad \text{in } \RR^d \times (0,T] \quad \text{and} \quad v(\cdot,0) = u_0 \quad \text{on } \RR^d
\end{equation}
and the approximate solution $v_h(\cdot; \zeta, \mcl P)$ given by \eqref{E:approxsolution}, which, for convenience, we define again here:
\begin{equation} \label{E:quantapproxsolution}
	\begin{dcases}
		v_h(\cdot,0; \zeta, \mcl P) := u_0, &\\[1.2mm]
		v_h(\cdot,t;\zeta, \mcl P) := S_h(t,t_n;\zeta)v_h(\cdot,t_n; \zeta, \mcl P) & \text{for } n = 0, 1, \ldots, N-1 \text{ and } t \in (t_n, t_{n+1}].
	\end{dcases}
\end{equation}

\begin{theorem} \label{T:pathwise}
Assume \eqref{A:Hdiffconvex}, \eqref{A:initialcondition}, and \eqref{A:Hlingrowth}. Then there exists $C = C_{L}> 0$ such that, if $S_h$ satisfies \eqref{A:constantcommute}, \eqref{A:translatecommute}, \eqref{A:quantmonotonicity}, and \eqref{A:quantconsistency}, $\zeta$ and $\mcl P$ satisfy \eqref{A:pathrestrict}, and $v$ and $v_h$ are as in \eqref{E:zetaeq} and \eqref{E:quantapproxsolution} with $\nor{Du_0}{\oo} \le L$, then, for all $\epsilon, h > 0$,
	\[
		\sup_{ (x,t) \in \RR^d \times [0,T]} \abs{v_h(x,t; \zeta, \mcl P) - v(x,t)} \le \frac{1}{\epsilon} \sum_{n=0}^{N-1} (\Delta t_n)^2 + C \sqrt{N} h + \max_{s,t \in [0,T]} \left\{ C \abs{\zeta (s) - \zeta(t)} - \frac{|s-t|^2}{2\epsilon} \right\} .
	\]
\end{theorem}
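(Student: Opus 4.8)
The plan is to run a quantitative version of the comparison argument for pathwise viscosity solutions, comparing the exact solution $v$ of \eqref{E:zetaeq} with the scheme solution $v_h$ by doubling variables in space and time. The spatial penalization will not be the usual $|x-y|^2/2\delta$ but the distance function $\Phi_\delta(\cdot,\cdot,\cdot;\zeta)$ of Lemma~\ref{L:distancefunction}, whose virtue is that its time slices are genuine $C^{1,1}$-in-space solutions of the Hamilton--Jacobi equation [part (d)], hence admissible test objects in Definition~\ref{D:nonsmoothH}, while still being comparable to $|x-y|^2/2\delta$ and semiconcave with constant $1/\delta$ [parts (b)--(c)]; to this I add a quadratic term $(t-s)^2/2\epsilon$ to decouple the continuous time of $v$ from the discrete time-stepping of $v_h$. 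By the transformation $(v_h,v,H(\cdot))\mapsto(-v_h,-v,-H(-\cdot))$, which preserves \eqref{A:Hdiffconvex}, \eqref{A:Hlingrowth}, \eqref{A:initialcondition}, \eqref{A:pathrestrict}, and all the hypotheses on $S_h$, it suffices to bound $v_h-v$ from above.

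The first ingredient is a one-step discrete sub-solution inequality: if $\Phi\in C([t_n,t_{n+1}],C^{1,1}(\RR^d))$ solves $d\Phi=\sum_i H^i(D\Phi)\circ d\zeta^i$, then, since $\mathrm{osc}(\zeta,t_n,t_{n+1})\le\lambda_0 h$ by \eqref{A:pathrestrict}, monotonicity \eqref{A:quantmonotonicity}, commutation with constants \eqref{A:constantcommute}, and consistency \eqref{A:quantconsistency} give
\[
	\sup_x\bigl(v_h(x,t_{n+1})-\Phi(x,t_{n+1})\bigr)\le\sup_x\bigl(v_h(x,t_n)-\Phi(x,t_n)\bigr)+C\norm{D^2\Phi}_\infty h^2,
\]
with $\norm{D^2\Phi}_\infty\le1/\delta$ for the relevant slices of $\Phi_\delta$ by Lemma~\ref{L:distancefunction}(b). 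I would then localize with a vanishing $\gamma(|x|^2+|y|^2)$ and pick a maximum point $(x^*,t^*,y^*,s^*)$ of $v_h(x,t)-v(y,s)-\Phi_\delta(x-y,t,s;\zeta)-(t-s)^2/2\epsilon$; Lemma~\ref{L:distancefunction}(c) forces $|x^*-y^*|^2\le C\delta$. On the scheme side, iterating the one-step inequality backwards from the partition interval containing $t^*$ down to $t=0$ accumulates the consistency error $\tfrac C\delta N h^2$ and, via a Riemann-sum-type discretization error against the quadratic time-penalty, the term $\tfrac1\epsilon\sum_n(\Delta t_n)^2$; at $t=0$ the bound closes using the Lipschitz continuity of $u_0$. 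On the PDE side, the super-solution property of $v$ (Definition~\ref{D:nonsmoothH} with $F\equiv 0$), applied with the appropriate time slice of $\Phi_\delta$ from part (d) --- selecting, with the help of the identity $\Phi_\delta(x,s,t;W)=\Phi_\delta(x,t,s;-W)$, whichever of its two forms makes the signs work --- supplies the matching inequality, using also that \eqref{E:zetaeq} preserves the spatial Lipschitz bound $\norm{Dv(\cdot,t)}_\infty\le L$ and that $|v(y,t)-v(y,s)|\le C\,\mathrm{osc}(\zeta,s,t)$. Combining the two sides at $(x^*,t^*,y^*,s^*)$, and invoking the time-regularity of $\Phi_\delta$ from Lemma~\ref{L:distancefunction}(a), leaves a surviving combination of the shape $C\,\mathrm{osc}(\zeta,s^*,t^*)-|t^*-s^*|^2/2\epsilon$; writing $\mathrm{osc}(\zeta,s^*,t^*)=|\zeta(r_1)-\zeta(r_2)|$ for the maximizing $r_1,r_2\in[s^*,t^*]$ and using $|r_1-r_2|\le|t^*-s^*|$ bounds this by $\max_{s,t}\{C|\zeta(s)-\zeta(t)|-|s-t|^2/2\epsilon\}$. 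Choosing finally $\delta\sim\sqrt N h$ balances the two $\delta$-dependent errors $\tfrac C\delta N h^2$ and $C\delta$, yielding the stated $C\sqrt N h$.

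The main obstacle is the interplay between the discrete time-stepping of the scheme and the fact that $\Phi_\delta(x-y,\cdot,s;\zeta)$ solves the Hamilton--Jacobi equation only on the time set $\{r:\mathrm{osc}(\zeta,r,s)<\delta\}$, while the backward iteration sweeps the whole partition and $\epsilon$ is not assumed small (so $|t^*-s^*|$ need not be small); reconciling this --- through the $\pi_\delta$-extension of $\Phi_\delta$ with careful bookkeeping of which partition intervals lie in the domain of validity, or by a case split according to the location of the maximum --- is the crux, alongside the sign/symmetry bookkeeping in pairing the two forms of Lemma~\ref{L:distancefunction}(d) with $v_h$ and $v$, and the routine care in ensuring the maximum is attained and in tracking constants.
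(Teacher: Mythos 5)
Your overall architecture matches the paper's: doubling variables with the distance function $\Phi_\delta$ of Lemma \ref{L:distancefunction} plus a quadratic time penalty $|s-t|^2/2\epsilon$, playing the super-solution property of $v$ (Definition \ref{D:nonsmoothH} with part (d) of the lemma) against the one-step monotonicity/consistency inequality for $S_h$, and optimizing $\delta\sim\sqrt N h$ at the end; your one-step inequality and your accounting for where $\frac1\epsilon\sum(\Delta t_n)^2$ and $C\sqrt N h$ come from are both sound. But the step you yourself flag as the crux is a genuine gap, and the specific fix you lean on would fail. Iterating the one-step inequality backwards from the interval containing $t^*$ all the way to $t=0$ against the \emph{fixed} test function $\Phi_\delta(\cdot-y,\cdot,s^*;\zeta)$ requires that function to solve the Hamilton--Jacobi equation on every partition interval swept, i.e.\ that $(t_n,s^*)\in U_\delta(\zeta)$ for all $n$ down to $0$; with $\delta\sim\sqrt N h$ this fails as soon as $\mathrm{osc}(\zeta,t_n,s^*)\ge\delta$, which is the generic situation. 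The $\pi_\delta$-extension does not rescue this: outside $\oline{U_\delta(\zeta)}$ the extension is defined by projection along anti-diagonals and is not a solution in either time variable, so neither the one-step scheme inequality nor the super-solution inequality for $v$ applies there. Relatedly, your claim that the term $\max_{s,t}\{C|\zeta(s)-\zeta(t)|-|s-t|^2/2\epsilon\}$ ``survives'' from combining the two sides at the interior maximum is not how that term actually enters.

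The paper closes the gap by never iterating through the partition in the doubling argument. It subtracts from the doubled functional two nondecreasing piecewise-constant accumulators, $\alpha(s)/\epsilon$ with jumps $(\Delta t_n)^2/\epsilon$ and $\mu(s)/\delta$ with jumps $\oline C h^2/\delta$, obtaining a function $\Psi(s,t)$ on $[0,T]^2$, and proves that $\max\Psi$ is attained on $\{s=0\}\cup\{t=0\}$. The key device for the domain-of-validity issue is the a priori constraint \eqref{E:delta}: $\delta$ is required to exceed $\max_{s,t}\{\oline C|\zeta(s)-\zeta(t)|-|s-t|^2/2\epsilon\}$, and then the elementary inequality $\Psi(\hat s,\hat s)\le\Psi(\hat s,\hat t)$ at an interior maximum, combined with Lemma \ref{L:distancefunction}(a) and the time regularity of $v$, forces $\mathrm{osc}(\zeta,\hat s,\hat t)<\delta/2$, so that the single partition interval containing $\hat s$ lies in $U_\delta(\zeta)$; the scheme is then applied only over that one step, the jump of $\mu$ absorbing the consistency error and the jump of $\alpha$ absorbing the second-order discretization error of the quadratic penalty, yielding $(\hat s-\hat t)/\epsilon\le 0$, which contradicts $(\hat s-\hat t)/\epsilon\ge\sigma$ from the equation side (after perturbing by $-\sigma t$). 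The term $\max_{s,t}\{C|\zeta(s)-\zeta(t)|-|s-t|^2/2\epsilon\}$ then enters the final bound twice: through the lower bound it imposes on the choice of $\delta$, and through the boundary cases $\hat s=0$ or $\hat t=0$, where Lemma \ref{L:solutionbounds} trades $\mathrm{osc}(\zeta,0,\hat t)$ against $\hat t^2/2\epsilon$ and supplies the $C\sqrt N h$ from the discrete time regularity of $v_h$. You would need to supply an argument of this type (or an equivalent case analysis) to make your plan rigorous.
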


Before proving Theorem \ref{T:pathwise}, we state some regularity estimates for $v$ and $v_h$. First, the monotonicity of the scheme operator $S_h$, the comparison principle for \eqref{E:firstordereq}, and the translation invariance of the solution operators for each immediately yield the Lipschitz bounds
\begin{equation}\label{E:vLipschitz}
	\nor{Dv}{\oo}, \; \nor{Dv_h}{\oo} \le L.
\end{equation}
The regularity of $v_h$ and $v$ in the time variable is established by the next result.

\begin{lemma} \label{L:solutionbounds}
	Assume \eqref{A:Hdiffconvex}, \eqref{A:initialcondition}, and \eqref{A:Hlingrowth}. There exists $C = C_{L} > 0$ such that, for all $(x,s,t) \in \RR^d \times [0,T] \times [0,T]$ with $s < t$,
	\begin{equation}\label{E:vtimereg}
		\abs{ v(x,t) - v(x,s) } \le C \mathrm{osc}(\zeta,s, t)
	\end{equation}
	and, for all $m,n \in \{0, 1, 2, \ldots N\}$ with $m < n$,
	\begin{equation}\label{E:vhtimereg}
		\abs{ v_h(x,t_n; \zeta, \mcl P) - v_h(x,t_m; \zeta, \mcl P)} \le  C  \pars{ h \sqrt{n-m}  + \mathrm{osc}(\zeta, t_m, t_n) }. 
	\end{equation}
\end{lemma}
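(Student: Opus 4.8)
The plan is to prove \eqref{E:vtimereg} and \eqref{E:vhtimereg} separately; the second is the substantive one.

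\emph{The bound \eqref{E:vtimereg}.} I would obtain this from the path-stability estimate of Lemma \ref{L:LSestimate}. Fix $s<t$, and let $\zeta^{(s)}$ be the path agreeing with $\zeta$ on $[0,s]$ and frozen at the value $\zeta(s)$ on $[s,T]$; the solution of \eqref{E:zetaeq} driven by $\zeta^{(s)}$ equals $v(\cdot,s)$ at every time in $[s,t]$. Applying Lemma \ref{L:LSestimate} on the interval $[0,t]$ (its proof is unchanged there, and $v$ on $[0,t]$ is in any case unaffected by $\zeta|_{(t,T]}$) gives $\abs{v(x,t)-v(x,s)}\le C\sup_{r\in[s,t]}\abs{\zeta(r)-\zeta(s)}\le C\,\mathrm{osc}(\zeta,s,t)$. (Alternatively, \eqref{E:vtimereg} is the special case of the argument below in which $S_h$ is replaced by the exact solution operator of \eqref{E:zetaeq}.)

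\emph{The bound \eqref{E:vhtimereg}.} Assume $n>m$, fix $x_0\in\RR^d$, and by \eqref{A:translatecommute}--\eqref{A:constantcommute} reduce to bounding $\abs{v_h(0,t_n;\zeta,\mcl P)-v_h(0,t_m;\zeta,\mcl P)}$. Since $v_h(\cdot,t_m;\zeta,\mcl P)$ is Lipschitz with constant $L$ by \eqref{E:vLipschitz}, the idea is to trap it between the constant $v_h(0,t_m;\zeta,\mcl P)$ and a nearby \emph{smooth exact} solution of the Hamilton--Jacobi part, then propagate the trap with the monotone composed operator $S_h(t_n,t_m;\zeta):=S_h(t_n,t_{n-1};\zeta)\circ\cdots\circ S_h(t_{m+1},t_m;\zeta)$. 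I would pick a scale $\delta>\mathrm{osc}(\zeta,t_m,t_n)$ and set $\Phi(x,\sigma):=v_h(0,t_m;\zeta,\mcl P)+\Phi_\delta(x,\sigma,t_m;\zeta)$, with $\Phi_\delta$ as in \eqref{E:distancefunction}. By Lemma \ref{L:distancefunction}(d), $\Phi$ is a $C(I,C^{1,1}(\RR^d))$-solution of the Hamilton--Jacobi part on an interval $I\supset[t_m,t_n]$ (as $\mathrm{osc}(\zeta,\sigma,t_m)\le\mathrm{osc}(\zeta,t_m,t_n)<\delta$ for $\sigma\in[t_m,t_n]$), and by Lemma \ref{L:distancefunction}(b), $0\le D^2\Phi\le\frac1\delta I_d$ throughout. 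The quadratic lower bound of Lemma \ref{L:distancefunction}(c) with Young's inequality gives $\Phi_\delta(y,t_m,t_m;\zeta)\ge L\abs{y}-C\delta$, hence $v_h(y,t_m;\zeta,\mcl P)\le v_h(0,t_m;\zeta,\mcl P)+L\abs{y}\le\Phi(y,t_m)+C\delta$ for all $y$; symmetrically, $\Phi^{-}(x,\sigma):=v_h(0,t_m;\zeta,\mcl P)-C\delta-\Phi_\delta(-x,t_m,\sigma;\zeta)$ is, again by Lemma \ref{L:distancefunction}(d), a $C^{1,1}$ Hamilton--Jacobi solution with $\Phi^{-}(\cdot,t_m)\le v_h(\cdot,t_m;\zeta,\mcl P)$.

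To propagate: \eqref{A:pathrestrict} forces $\mathrm{osc}(\zeta,t_j,t_{j+1})=\abs{\zeta(t_{j+1})-\zeta(t_j)}\le\lambda_0h$ on each step, so each $S_h(t_{j+1},t_j;\zeta)$ is monotone by \eqref{A:quantmonotonicity}, and a straightforward induction using \eqref{A:quantconsistency}, \eqref{A:constantcommute}, and this step-monotonicity gives
\[
	S_h(t_n,t_m;\zeta)\Phi(\cdot,t_m)(0)\le\Phi(0,t_n)+C\sum_{j=m}^{n-1}\nor{D^2\Phi}{\oo}h^2\le\Phi(0,t_n)+\frac{C(n-m)h^2}{\delta},
\]
and the corresponding lower inequality with $\Phi^{-}$. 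With the trap this yields
\[
	\Phi^{-}(0,t_n)-\frac{C(n-m)h^2}{\delta}\le v_h(0,t_n;\zeta,\mcl P)\le\Phi(0,t_n)+C\delta+\frac{C(n-m)h^2}{\delta}.
\]
Finally, since $\mathrm{osc}(\zeta,t_m,t_n)<\delta$, both $(t_n,t_m)$ and $(t_m,t_m)$ lie in $U_\delta(\zeta)$, so Lemma \ref{L:distancefunction}(a) — and its version in the second time variable via $\Phi_\delta(x,s,t;W)=\Phi_\delta(x,t,s;-W)$ — applied at $x=0$ bounds $\abs{\Phi_\delta(0,t_n,t_m;\zeta)-\Phi_\delta(0,t_m,t_m;\zeta)}$ and $\abs{\Phi_\delta(0,t_m,t_n;\zeta)-\Phi_\delta(0,t_m,t_m;\zeta)}$ by $C\abs{\zeta(t_n)-\zeta(t_m)}\le C\,\mathrm{osc}(\zeta,t_m,t_n)$; as $\abs{\Phi_\delta(0,t_m,t_m;\zeta)}\le C\delta$ by Lemma \ref{L:distancefunction}(c), this gives $\abs{\Phi(0,t_n)-v_h(0,t_m;\zeta,\mcl P)}$ and $\abs{\Phi^{-}(0,t_n)-v_h(0,t_m;\zeta,\mcl P)}$ both $\le C\delta+C\,\mathrm{osc}(\zeta,t_m,t_n)$. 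Hence $\abs{v_h(0,t_n;\zeta,\mcl P)-v_h(0,t_m;\zeta,\mcl P)}\le C\pars{\delta+\mathrm{osc}(\zeta,t_m,t_n)+(n-m)h^2/\delta}$, and the choice $\delta:=h\sqrt{n-m}+\mathrm{osc}(\zeta,t_m,t_n)$ (so $\delta>\mathrm{osc}(\zeta,t_m,t_n)$ and $(n-m)h^2/\delta\le h\sqrt{n-m}$) gives \eqref{E:vhtimereg}.

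\emph{Main obstacle.} The delicate point is choosing the object to compare $v_h$ against. A step-by-step comparison with the exact flow makes the consistency errors accumulate to order $(n-m)h$ rather than $\sqrt{n-m}\,h$, and replaces $\mathrm{osc}(\zeta,t_m,t_n)$ by the total variation of $\zeta$ on $[t_m,t_n]$; both are too weak. Using instead a single smooth Hamilton--Jacobi solution over the whole block $[t_m,t_n]$, the bound $D^2\Phi\le\frac1\delta I_d$ controls the accumulated scheme error by $(n-m)h^2/\delta$, while the structure of $\Phi_\delta$ on $U_\delta(\zeta)$ — where it is an exact solution depending on $\zeta$ only through $\zeta(t_n)-\zeta(t_m)$ — is exactly what turns "variation" into "oscillation"; balancing $\delta$ then produces the $\sqrt{n-m}\,h$ term. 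The compatibility to keep in mind is that $\Phi_\delta(\cdot,\cdot,t_m;\zeta)$ is a $C^{1,1}$ solution on $[t_m,t_n]$ precisely when $\mathrm{osc}(\zeta,t_m,t_n)<\delta$, which the optimal $\delta$ respects.
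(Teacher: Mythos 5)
Your argument for \eqref{E:vhtimereg} is correct and is essentially the paper's proof: the same barrier $\Phi_\delta$ from Lemma \ref{L:distancefunction}, propagated over the block $[t_m,t_n]$ by the composed monotone operator with the consistency error $C(n-m)h^2/\delta$ accumulated via the semiconcavity bound $\nor{D^2\Phi_\delta}{\oo}\le 1/\delta$, followed by the same optimization in $\delta\ge\mathrm{osc}(\zeta,t_m,t_n)$. The only cosmetic difference is that the paper keeps the center $y$ free and sets $x=y$ at the end, so that $\Phi_\delta(0,t_n,t_m;\zeta)\le 0$ disposes of the boundary term without invoking Lemma \ref{L:distancefunction}(a); your two-sided-barrier bookkeeping reaches the same bound. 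For \eqref{E:vtimereg} the paper simply cites the cancellation estimates of \cite{Snotes}, whereas your path-freezing argument via Lemma \ref{L:LSestimate} is a valid self-contained substitute.
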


\begin{proof}
	The bound \eqref{E:vtimereg} follows from the cancellation estimates presented in Proposition 7.2 of \cite{Snotes}. To prove \eqref{E:vhtimereg}, observe first that, in view of Lemma \ref{L:distancefunction}(c), there exists $C = C_L > 0$ such that, for all $z \in \RR^d$ and $\delta > 0$,
	\[
		L|z| \le \Phi_\delta(z,t_m, t_m; \zeta) + C\delta.
	\]
	Then \eqref{E:vLipschitz} yields, for all $x,y \in \RR^d$,
	\begin{equation} \label{E:applyscheme}
		v_h(x,t_m; \zeta, \mcl P) \le v_h(y,t_m; \zeta, \mcl P) + L|x-y| \le v_h(y,t_m; \zeta, \mcl P) + \Phi_\delta(x-y,t_m, t_m; \zeta) + C \delta.
	\end{equation}
	Keeping $y$ fixed, we then apply the operator $\prod_{k=m}^{n-1} S_h(t_{k+1}, t_k; \zeta, \mcl P)$ to the left- and right-hand sides of the inequality \eqref{E:applyscheme}, which is preserved because of the monotonicity of this operator implied by \eqref{A:quantmonotonicity} and \eqref{A:pathrestrict}. According to \eqref{E:quantapproxsolution}, the left-hand side becomes $v_h(x,t_n; \zeta, \mcl P)$. Iteratively using \eqref{A:quantconsistency} to compare the right-hand side to $\Phi_\delta(x - y, t_n, t_m; \zeta, \mcl P)$ yields, in view of Lemma \ref{L:distancefunction}(b),
	\begin{align*}
		v_h(x,t_n; \zeta, \mcl P) &\le v_h(y,t_m; \zeta, \mcl P) + \Phi_\delta(x-y, t_n, t_m; \zeta) + C\pars{ \delta + (n-m)\nor{D^2\Phi_\delta}{\oo} h^2}\\
		&\le v_h(y,t_m; \zeta, \mcl P) + \Phi_\delta(x-y, t_n, t_m; \zeta) + C \pars{ \delta + \frac{(n-m) h^2}{\delta}},
	\end{align*}
	as long as $\mathrm{osc}(\zeta,t_m,t_n) \le \delta$. Setting $x=y$ gives
	\[
		v_h(x,t_n; \zeta, \mcl P) - v_h(x,t_m; \zeta, \mcl P) \le C \inf \left\{ \delta + \frac{(n-m) h^2}{\delta} : \delta \ge \mathrm{osc}(\zeta,t_m,t_n) \right\}.
	\]
	If $\mathrm{osc}(\zeta,t_m,t_n) \le h \sqrt{n-m}$, then the right-hand side is optimized by choosing $\delta = h \sqrt{n-m}$. Otherwise, setting $\delta = \mathrm{osc}(\zeta,t_m,t_n)$ gives the result, since in this case,
	\[
		\frac{(n-m) h^2}{\delta} = \frac{(n-m) h^2}{\mathrm{osc}(\zeta,t_m,t_n)} \le h \sqrt{n-m}.
	\]
	The lower bound for $v_h(\cdot,t_n; \zeta, \mcl P) - v_h(\cdot,t_m; \zeta, \mcl P)$ is proved similarly.
\end{proof}

\begin{proof} [Proof of Theorem \ref{T:pathwise}]
	Throughout the proof, to simplify the presentation, we set $v_h(x,t) := v_h(x,t; \zeta, \mcl P)$. Fix a constant $\oline{C} = \oline{C}_L > 0$ to be determined later, and let $\alpha, \mu: [0,T] \to \RR$ be the nondecreasing, lower-semicontinuous, piecewise constant functions defined by $\alpha(0) = \mu(0) = 0$ and
	\[
		\alpha(s) - \alpha(t_n) := [(\Delta t)_n]^2 \quad \text{and} \quad \mu(s) - \mu(t_n) = \oline{C} h^2 \quad \text{for } n = 0, 1, 2,\ldots, N-1 \text{ and }s \in (t_n, t_{n+1}].
	\]
	Choose $\epsilon > 0$ and
	\begin{equation}\label{E:delta}
		\delta > \max \left\{2 \lambda_0 h, \max_{s,t \in [0,T]} \left\{\oline{C} \abs{ \zeta(s) - \zeta(t)} - \frac{|s-t|^2}{2\epsilon} \right\} \right\},
	\end{equation}
	and define the auxiliary function $\Psi: [0,T] \times [0,T] \to \RR$ by
	\begin{equation} \label{E:auxfunc}
		\Psi(s, t) = \sup_{x, y \in \RR^d} \left\{ v_h(x,s) - v(y,t) - \Phi_\delta(x-y, s,t; \zeta) \right\} - \frac{|s - t|^2}{2\epsilon} - \frac{\mu(s)}{\delta}  - \frac{\alpha(s)}{\epsilon},
	\end{equation}
	where $\Phi_\delta$ is the ``distance function'' given in \eqref{E:distancefunction}.
		
	{\it Step 1:} We first prove that, if $\oline{C}$ is sufficiently large, then
	\begin{equation}\label{E:maxPhi}
		\max_{ [0,T]^2 } \Psi = \max \left\{ \max_{s \in [0,T]} \Psi(s,0), \max_{t \in [0,T]} \Psi(0,t) \right\}.
	\end{equation}
	Assume for the sake of contradiction that, for some $\sigma > 0$, $\Psi(s,t) - \sigma t$ attains its maximum in $[0,T] \times [0,T]$ at $(\hat s, \hat t)$ with $\hat s > 0$ and $\hat t > 0$.
	
	The first observation is that, for some $M = M_L > 0$, the supremum in \eqref{E:auxfunc} may be restricted to $x,y \in \RR^d$ satisfying $|x-y| \le M\delta$. This is because, for any $s,t \in [0,T]$ and for some $C' = C'_L > 0$,
	\[
		\sup_{x , y \in \RR^d} \left\{ v_h(x,s) - v(y,t) - \Phi_\delta(x - y , s, t ; \zeta) \right\} 
		\ge \sup_{x \in \RR^d} \left\{ v_h(x,s) -  v(x,t) \right\} - C' \delta,
	\]
	while, if $|x - y| > M\delta$, then \eqref{E:vLipschitz} and Lemma \ref{L:distancefunction}(c) give, for some $C = C_L > 0$,
	\begin{align*}
		v_h(x,s) - v(y,t) - \Phi_\delta(x - y,s, t; \zeta)
		&\le \sup_{x \in \RR^d} \left\{ v_h(x,s) -  v(x,t) \right\}  + L|x-y| - \frac{|x-y|^2}{2(C+1)\delta} + C\delta \\
		&\le \sup_{x \in \RR^d} \left\{  v_h(x,s) -  v(x,t) \right\} - \frac{M^2 \delta}{4(C+1)} + (C + (C+1)L^2) \delta\\
		&< \sup_{x \in \RR^d} \left\{ v_h(x,s) -  v(x,t) \right\} - C' \delta,
	\end{align*}
	where the last inequality holds if $M$ is sufficiently large.
	
	As a result, if $\oline{C}$ is large enough, then $(\hat s, \hat t) \in U_{\delta/2}(W)$. To verify this, we rearrange terms in the inequality $\Psi(\hat s,\hat s) \le \Psi(\hat s, \hat t)$ and use Lemmas \ref{L:distancefunction}(a) and \ref{L:solutionbounds} to obtain, for some $C = C_L > 0$,
	\begin{align*}
		\frac{ | \hat s - \hat t|^2}{2 \epsilon} 
		\le \sup_{|x-y| \le M \delta} \left\{ v_h(y,\hat s) -  v(y,\hat t) + \Phi_\delta(x-y, \hat s , \hat s; \zeta) - \Phi_\delta(x-y, \hat s, \hat t; \zeta) \right\}
		\le C \mathrm{osc}(\zeta, \hat s, \hat t).
	\end{align*}	
	Consequently,
	\begin{align*}
		\oline{C} \mathrm{osc}(\zeta,\hat s, \hat t) &\le \max_{s,t \in [0,T]} \left\{ \oline{C} \mathrm{osc}(\zeta,s,t) - \frac{ |s-t|^2}{2\epsilon} \right\} + \frac{|\hat s - \hat t|^2}{2\epsilon} \\
		&\le \max_{s,t \in [0,T] } \left\{\oline{C} \abs{ \zeta(s) - \zeta(t) } - \frac{|s-t|^2}{2\epsilon} \right\} + C \mathrm{osc}(\zeta,\hat s, \hat t) 
		\le \delta + C \mathrm{osc}(\zeta,\hat s, \hat t),
	\end{align*}
	so that
	\[
		\mathrm{osc}(\zeta,\hat s, \hat t) \le \frac{\delta}{\oline{C}- C} < \frac{\delta}{2} \quad \text{if} \quad \oline{C} > C + 2.
	\] 
	Now, if $\hat n \in \{0, 1, 2, \ldots, N-1 \}$ is the integer satisfying $t_{\hat n} < \hat s \le t_{\hat n + 1}$, then the linearity of $\zeta$ on $[t_{\hat n}, t_{\hat n + 1}]$ implies that
	\[
		|\zeta(\hat s) - \zeta(t_{\hat n })| \le \lambda_0 h < \frac{\delta}{2},
	\]
	and so the triangle inequality yields $(t_{\hat n}, \hat t) \in U_\delta(\zeta)$. This, in turn, means that $(s,\hat t) \in U_\delta(\zeta)$ for all $s \in [t_{\hat n}, \hat s ]$.
	
	We next use the definition of pathwise viscosity solutions to establish the inequality
	\begin{equation}\label{E:equationineq}
		\frac{\hat s - \hat t}{\epsilon} \ge \sigma. 
	\end{equation}
	In view of Lemma \ref{L:distancefunction}(c), for any $x \in \RR^d$, the function
	\[
		y \mapsto v(y,t) + \Phi_\delta(x - y, \hat s, t ; \zeta)
	\]
	attains a global minimum over $\RR^d$. Definition \ref{D:nonsmoothH} and Lemma \ref{L:distancefunction}(d) then imply that
	\[
		t \mapsto \inf_{y \in \RR^d} \left\{ v(y,t) + \Phi_\delta(x - y, \hat s, t; \zeta ) \right\}
	\]
	is nondecreasing on $I := \{ t \in [0,T] : (\hat s, t) \in U_\delta(\zeta) \}$, and therefore, so is
	\[
		\phi(t) := \inf_{x, y \in \RR^d} \left\{ v(y,t) - v_h(x,\hat s) +\Phi_\delta(x - y,\hat s, t ; \zeta) \right\}.
	\]
	Since $\phi(t) + \frac{|\hat s- t|^2}{2\epsilon} + \sigma t$ attains a minimum at $\hat t \in I$, \eqref{E:equationineq} follows.
		
	On the other hand, we obtain a contradiction by using \eqref{A:quantmonotonicity} and \eqref{A:quantconsistency} to show that
	\begin{equation}\label{E:schemeineq}
		\frac{\hat s - \hat t}{\epsilon} \le 0.	 
	\end{equation}
	The first step is to prove that, for each $y \in \RR^d$, the function
	\[
		a(s) := \sup_{x \in \RR^d} \left\{ v_h(x,s) - \Phi_\delta(x - y, s, \hat t; \zeta) \right\} - \frac{\mu(s)}{\delta}
	\]
	satisfies
	\[
		\max_{[t_{\hat n}, t_{\hat n+1}]} a = a(t_{\hat n}).
	\]
	Indeed, if this were not the case, then, for some $s^* \in [t_{\hat n}, t_{ \hat n + 1} ]$ and sufficiently small $\beta > 0$,
	\[
		a(t_{\hat n}) \le a(s^*) - \beta(s^* - t_{\hat n} ).
	\]
	Lemma \ref{L:distancefunction}(c) implies that the supremum in the definition of $a(s^*)$ is attained for some $x^* \in \RR^d$, and so it follows that, for all $x \in \RR^d$,
	\begin{equation} \label{E:applyschemetoineq}
		v_h(x, t_{\hat n}) \le v_h(x^*, s^* ) + \Phi_\delta(x - y, t_{\hat n}, \hat t ; \zeta) - \Phi_\delta(x^* - y, s^*, \hat t; \zeta) - \frac{\mu(s^*) - \mu(t_{\hat n})}{\delta} - \beta(s^* - t_{\hat n}).
	\end{equation}
	In view of \eqref{A:quantmonotonicity} and the fact that $\mathrm{osc}(\zeta,t_{\hat n}, s^*) \le \lambda_0 h$, the operator $S_h(s^*, t_{\hat n}; \zeta)$ is monotone. Applying it to both sides of the inequality \eqref{E:applyschemetoineq}, setting $x = x^*$, rearranging terms, and using \eqref{A:quantconsistency} and Lemma \ref{L:distancefunction}(b) and (d) yield 
	\[
		\frac{\oline{C} h^2}{\delta} + \beta(s^* - t_{\hat n} ) = \frac{\mu(s^*) - \mu(t_{\hat n})}{\delta} + \beta(s^* - t_{\hat n} ) \le C\nor{D^2\Phi}{\oo} h^2 \le \frac{Ch^2}{\delta}.
	\]
	This results in a contradiction as long as $\oline{C} \ge C$.
	
	As a consequence,
	\[
		\psi(s) := \sup_{x, y \in \RR^d} \left\{ v_h(x,s) - v(y, \hat t) - \Phi_\delta(x - y, s, \hat t; \zeta) \right\} - \frac{\mu(s)}{\delta}
	\]
	attains its maximum in $[t_{\hat n}, t_{\hat n + 1}]$ at $t_{\hat n}$, and therefore, because $\psi(s) - \frac{|s - \hat t|^2}{2\epsilon} - \frac{\alpha(s)}{\epsilon}$ attains a maximum at $\hat s$, 
	\[
		\psi(t_{\hat n} ) - \frac{ |t_{\hat n} - \hat t |^2}{2\epsilon} - \frac{\alpha(t_{\hat n})}{\epsilon} \le \psi(\hat s) - \frac{ |\hat s - \hat t |^2}{2\epsilon} - \frac{\alpha(\hat s)}{\epsilon} \le \psi(t_{\hat n}) - \frac{ |\hat s - \hat t |^2}{2\epsilon} - \frac{\alpha(\hat s)}{\epsilon}
	\]
	which, after rearranging terms, yields \eqref{E:schemeineq}. Together with \eqref{E:equationineq}, this establishes \eqref{E:maxPhi}.
	
	{\it Step 2:} The next claim is that, for some $C = C_{L} > 0$,
	\[
		\max_{\{0\} \times [0,T] \cup [0,T] \times \{0\} } \Psi \le C \pars{ \delta + \sqrt{N} h} +\max_{s,t \in [0,T]} \left\{ C\abs{ \zeta(s) - \zeta(t)} - \frac{|s-t|^2}{2\epsilon} \right\}.
	\]
	Assume that $\Psi$ attains its maximum at $(\hat s, \hat t)$, with either $\hat s = 0$ or $\hat t = 0$.
	
	If $\hat s = \hat t = 0$, then Lemmas \ref{L:distancefunction}(c) and \ref{L:solutionbounds} yield $C = C_L > 0$ such that
	\begin{gather*}
		\Psi(0,0)
		= \sup_{x, y \in \RR^d} \left\{ u_0(x) - u_0(y) - \Phi_\delta(x - y, 0,0; \zeta)\right\}\\
		\le \sup_{x, y \in \RR^d} \left\{ L|x - y| - \frac{1}{2(C+1)\delta}|x - y|^2\right \} + C\delta
		\le \pars{ C + \frac{(C+1)L^2}{2} }\delta.
	\end{gather*}
	
	Assume now that $\hat s = 0$. Then, in view of Lemmas \ref{L:distancefunction}(c) and \ref{L:solutionbounds},
	\begin{align*}
		\Psi(0,\hat t)
		&= \sup_{|x - y| \le M \delta} \left\{ u_0(x) - v(y,\hat t) - \Phi_\delta(x - y,0,\hat t; \zeta) \right\} - \frac{\hat t^2}{2\epsilon}\\
		&\le C \delta + \sup_{|x - y| \le M \delta} \left\{ u_0(y) - v(y,\hat t) - \Phi_\delta(x - y, 0, \hat t ; \zeta) \right\} - \frac{\hat t^2}{2\epsilon}\\
		& \le C \delta + \max_{t \in [0,T]} \pars{ C\mathrm{osc}(\zeta,0,t) - \frac{t^2}{2\epsilon}}
		= C \delta + \max_{s,t \in [0,T]} \pars{ C|\zeta(s) - \zeta(t)| - \frac{|s-t|^2}{2\epsilon}}.
	\end{align*}
	
	Finally, if $\hat t = 0$, then Lemma \ref{L:solutionbounds} gives
	\begin{equation*}
		\begin{split}
			\Psi(\hat s, 0)
			&\le \sup_{| x - y| \le M \delta} \left\{ v_h(x,\hat s) - u_0(y) - \Phi_\delta( x - y, \hat s, 0; \zeta) \right\} - \frac{\hat s^2}{2\epsilon} \\
			&\le C \delta + \sup_{x \in \RR^d} \left\{ v_h(x, \hat s) - u_0(x) \right\} - \frac{\hat s^2}{2\epsilon} \\
			&\le C \pars{  \delta + \sqrt{N} h} + \max_{s,t \in [0,T]} \left\{ C\abs{\zeta(s) - \zeta(t)} - \frac{|s-t|^2}{2\epsilon} \right\}.
		\end{split}
	\end{equation*}
	
	{\it Step 3.} Combining the previous two steps and rearranging terms yields, for all $(x,t) \in \RR^d \times [0,T]$,
	\[
		v_h(x,t) - v(x,t) \le \frac{1}{\epsilon} \sum_{n=0}^{N-1} (\Delta t_n)^2 + C \pars{ \delta + \frac{N h^2}{\delta} + \sqrt{N}h } + \max_{s,t \in [0,T]} \left\{ C\abs{ \zeta(s) - \zeta(t)} - \frac{|s-t|^2}{2\epsilon}   \right\}.
	\]
	The inequality is optimized by setting
	\[
		\delta := \max \left\{ C \sqrt{N} h, \max_{s,t \in [0,T]} \pars{ C\abs{ \zeta(s) - \zeta(t)} - \frac{|s-t|^2}{2\epsilon}}  \right\}
	\]
	for a sufficiently large constant $C = C_{L} > 0$, which clearly satisfies \eqref{E:delta}. This finishes the proof of the upper bound for $v_h - v$, and the lower bound is proved similarly.
\end{proof}

\section{Convergence rates} \label{S:regularrates}

In this section, the pathwise estimate from Theorem \ref{T:pathwise} is used to obtain a rate of convergence for schemes approximating solutions of the Hamilton-Jacobi equation 
\begin{equation} \label{E:firstordereq}
	du = \sum_{i=1}^m H^i(Du) \circ dW^i \quad \text{in } \RR^d \times (0,T] \quad \text{and} \quad u(\cdot,0) = u_0 \quad \text{in } \RR^d.
\end{equation}
It will always be assumed, as in Section \ref{S:pathwise}, that 
\begin{equation}\label{A:section6}
	\left\{
	\begin{split}
	&\text{$H$ and $u_0$ satisfy \eqref{A:Hdiffconvex}, \eqref{A:initialcondition}, and \eqref{A:Hlingrowth},}\\[1.2mm]
	&\text{and the scheme operator $S_h$ satisfies \eqref{A:constantcommute}, \eqref{A:translatecommute}, \eqref{A:quantmonotonicity}, and \eqref{A:quantconsistency}}.
	\end{split}
	\right.
\end{equation}

We first examine the setting in which $W$ is a fixed, deterministic path, and then some extensions are presented in the case where $W$ is a Brownian motion. Following Section \ref{S:scheme}, we define $u_h := v_h(\cdot; W_h, \mcl P_h)$, with $v_h$ as in \eqref{E:quantapproxsolution}, for an appropriate family of approximating paths $\{W_h\}_{h > 0}$ and partitions $\{ \mcl P_h\}_{h > 0}$. Let $v$ be the viscosity solution of
\begin{equation}\label{E:intermediatesolution}
	v_t = \sum_{i=1}^m H^i(D v) \dot W_h \quad \text{in } \RR^d \times (0,T] \quad \text{and} \quad v(\cdot, 0) = u_0 \quad \text{in } \RR^d.
\end{equation}
The error $u_h - u$ is then controlled by using Theorem \ref{T:pathwise} and Lemma \ref{L:LSestimate} to estimate respectively the differences $u_h - v$ and $v - u$.

\subsection{A fixed continuous path}

Fix $W \in C([0,T]; \RR^m)$, and let $\omega:[0,\oo) \to [0,\oo)$ be its modulus of continuity. Define $\rho_h$ implicitly by
\begin{equation} \label{E:pathwiseCFL}
	\lambda := \frac{\omega((\rho_h)^{1/2})}{(\rho_h)^{1/2}} < \lambda_0
\end{equation}
and set $\mcl P_h := \left\{ n \rho_h \wedge T \right\}_{n = 0}^N$, where $N$ is the smallest integer for which $N \rho_h \wedge T = T$. 

Recall from subsection \ref{SS:HamiltonJacobi} that taking $W_h$ to be the piecewise linear interpolation of $W$ over the partition $\mcl P_h$ may not, in general, yield a convergent scheme. Instead, we set
\[
	M_h := \left \lfloor (\rho_h)^{-1/2} \right \rfloor
\]
and define $W_h$ by
\begin{equation}\label{E:slowpath}
	W_h(t) := W(kM_h\rho_h) + \pars{ \frac{ W((k+1)M_h\rho_h) - W(kM_h\rho_h)}{M_h\rho_h} } \pars{ t - kM_h\rho_h}
\end{equation}
for $k \in \NN_0$ and $t \in [kM_h\rho_h, (k+1)M_h\rho_h)$.
Observe that the approximating path $W_h$ satisfies \eqref{E:slowderivative} with $\eta_h = (\rho_h)^{1/2}$.

Now set $u_h := v_h(\cdot; W_h, \mcl P_h)$, with $v_h$ as in \eqref{E:quantapproxsolution}, and let $v$ be the solution of \eqref{E:intermediatesolution}.

\begin{theorem} \label{T:fixedcontinuouspath}
	Assume \eqref{A:section6} and let $u_h$ and $u$ be as described above. Then there exists $C = C_{L,\lambda} > 0$ such that
	\begin{equation}\label{E:regularpartitionestimate}
		\sup_{(x,t) \in \RR^d \times [0,T]} \abs{ u_h(x,t) - u(x,t)} \le C(1+ T)\omega((\rho_h)^{1/2}). 
	\end{equation}
\end{theorem}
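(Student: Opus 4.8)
The plan is the decomposition indicated just before the statement: write $u_h - u = (u_h - v) + (v - u)$, where $v$ is the solution of \eqref{E:intermediatesolution} driven by the regularized path $W_h$, estimate $u_h - v$ with the pathwise bound of Theorem \ref{T:pathwise}, and estimate $v - u$ with the path-stability bound of Lemma \ref{L:LSestimate}.

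First I would verify that the pair $(W_h,\mcl P_h)$ is admissible in Theorem \ref{T:pathwise}, i.e.\ satisfies \eqref{A:pathrestrict}. After the harmless normalization $W(0)=0$ we have $W_h(0)=0$; since $M_h\in\NN$, every breakpoint $kM_h\rho_h$ of $W_h$ is a node of $\mcl P_h$, so $W_h$ is affine on each interval of $\mcl P_h$; and on $[kM_h\rho_h,(k+1)M_h\rho_h)$ the (constant) slope $\Lambda_h$ of $W_h$ satisfies $\Lambda_h\le \mathrm{osc}(W,kM_h\rho_h,(k+1)M_h\rho_h)/(M_h\rho_h)\le\omega(M_h\rho_h)/(M_h\rho_h)$. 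Using $M_h\rho_h\le(\rho_h)^{1/2}$ and $M_h\ge(\rho_h)^{-1/2}-1$, this yields $\Lambda_h(\rho_h)^{1/2}\le \omega((\rho_h)^{1/2})/(1-(\rho_h)^{1/2})\le C\,\omega((\rho_h)^{1/2})$ and, in view of the CFL relation \eqref{E:pathwiseCFL} defining $\rho_h$, the one-step increment $\max_n|W_h(t_{n+1})-W_h(t_n)|\le\Lambda_h\rho_h$ is $\le\lambda_0 h$ for all sufficiently small $h$, since $\lambda<\lambda_0$. The same slope bound also gives $\nor{W_h-W}{\oo}\le\sup_k\mathrm{osc}(W,kM_h\rho_h,(k+1)M_h\rho_h)\le\omega((\rho_h)^{1/2})$, whence Lemma \ref{L:LSestimate} produces $\sup|v-u|\le C_L\,\omega((\rho_h)^{1/2})$.

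Then I would feed these facts into Theorem \ref{T:pathwise}. With $N=\lceil T/\rho_h\rceil$ one has $\sum_n(\Delta t_n)^2\le N\rho_h^2\le C(1+T)\rho_h$, and $\sqrt N\,h\le C\sqrt{1+T}\,h/(\rho_h)^{1/2}\le C\sqrt{1+T}\,\omega((\rho_h)^{1/2})$ by \eqref{E:pathwiseCFL}; moreover, $W_h$ being $\Lambda_h$-Lipschitz, $\max_{s,t}\{C|W_h(s)-W_h(t)|-|s-t|^2/(2\epsilon)\}\le C\,\Lambda_h^2\,\epsilon$. Theorem \ref{T:pathwise} then gives, for every $\epsilon>0$,
\[
	\sup_{\RR^d\times[0,T]}|u_h-v|\ \le\ \frac{C(1+T)\rho_h}{\epsilon}\ +\ C\sqrt{1+T}\,\omega((\rho_h)^{1/2})\ +\ C\,\Lambda_h^2\,\epsilon .
\]
Choosing $\epsilon$ of order $(1+T)^{1/2}(\rho_h)^{1/2}/\Lambda_h$ balances the first and last terms, which then become of order $\Lambda_h(1+T)^{1/2}(\rho_h)^{1/2}=(1+T)^{1/2}\bigl(\Lambda_h(\rho_h)^{1/2}\bigr)\le C(1+T)^{1/2}\omega((\rho_h)^{1/2})$ by the previous paragraph. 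Adding the bound on $\sup|v-u|$ and using $(1+T)^{1/2}\le 1+T$ gives $\sup|u_h-u|\le C_{L,\lambda}(1+T)\,\omega((\rho_h)^{1/2})$; the finitely many $h$ too large for the CFL step are absorbed by enlarging $C$, using the uniform a priori bounds on $u_h$ and $u$.

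The main obstacle — really the conceptual heart of the construction rather than a technical nuisance — is recognizing, in the second paragraph, that interpolating $W$ on the coarse scale $M_h\rho_h\approx(\rho_h)^{1/2}$, rather than on the partition $\mcl P_h$ itself, is exactly what forces all three quantities controlling the error, namely $\nor{W_h-W}{\oo}$, the one-step increment of $W_h$, and the product $\Lambda_h(\rho_h)^{1/2}$, to be dominated by the single value $\omega((\rho_h)^{1/2})$ (note that $\Lambda_h$ itself need not stay bounded as $h\to0$, which is why the optimization in $\epsilon$ is essential). Once this is in hand, Theorem \ref{T:pathwise} together with the one-parameter minimization in $\epsilon$ does the rest, and what remains is the elementary bookkeeping in that minimization and in the verification of \eqref{A:pathrestrict}.
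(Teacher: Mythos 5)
Your proposal is correct and follows essentially the same route as the paper: verify that $(W_h,\mcl P_h)$ satisfies \eqref{A:pathrestrict}, apply Theorem \ref{T:pathwise} with the quadratic-in-$|s-t|$ optimization of the penalization term, choose $\epsilon\sim (\rho_h)^{3/2}/h$ to balance the terms, and control $v-u$ via Lemma \ref{L:LSestimate} using $\nor{W_h-W}{\oo}\le\omega(M_h\rho_h)$. Your phrasing of the penalization bound through the global Lipschitz constant $\Lambda_h$ of $W_h$, and your explicit handling of the finitely many large $h$, are only cosmetic variations on the paper's argument.
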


As an example, assume that $W \in C^\alpha([0,T], \RR^m)$ and set
\[
	\lambda := [W]_{\alpha,T} \frac{(\rho_h)^{(1+\alpha)/2}}{h}.
\]
Then, as long as $\lambda < \lambda_0$, the scheme converges with a rate of order $(\rho_h)^{\alpha/2} \approx h^{\alpha/(1+\alpha)}$.

\begin{proof}[Proof of Theorem \ref{T:fixedcontinuouspath}] First, notice that, in view of \eqref{E:pathwiseCFL}, $W_h$ satisfies \eqref{A:pathrestrict}. In particular, for some $C = C_L > 0$,
\[
	\max_{s,t \in [0,T]} \pars{ C|W_h(s) - W_h(t)| - \frac{|s-t|^2}{2\epsilon}} \le C \lambda_0 h + \max_{n \in \NN_0} \pars{ Cn \lambda_0 h - \frac{n^2 \rho_h^2}{2\epsilon}}
	\le C\lambda_0 h + \frac{(C\lambda_0 h)^2 \epsilon}{2 (\rho_h)^2}.
\]
Theorem \ref{T:pathwise} then gives, for any $\epsilon > 0$,
\begin{align*}
	\max_{(x,t) \in \RR^d \times [0,T]} \abs{ u_h(x,t)  - v(x,t) }
	&\le \frac{N (\rho_h)^2}{\epsilon} +  C \sqrt{N} h + \frac{(C\lambda_0 h)^2 \epsilon}{2 (\rho_h)^2}  \\
	&\le \frac{T \rho_h}{\epsilon} + C \sqrt{T} \frac{h}{\sqrt{\rho_h}} + \frac{(C\lambda_0 h)^2 \epsilon}{2 (\rho_h)^2}. 
\end{align*}
Upon choosing $\epsilon = \sqrt{T} \frac{(\rho_h)^{3/2}}{h}$, this becomes
\begin{equation}\label{E:fixedtimestep}
	\max_{(x,t) \in \RR^d \times [0,T]} \abs{ u_h(x,t)  - v(x,t)} \le C \sqrt{T} \frac{h}{\sqrt{\rho_h}} = C \sqrt{T} \omega((\rho_h)^{1/2}).
\end{equation}
Notice that the error term takes the form $\sqrt{ \frac{h^2}{\rho_h}}$, which is consistent with the discussion in subsection \ref{SS:HamiltonJacobi}.

Lemma \ref{L:LSestimate} then implies that
\[
	\sup_{(x,t) \in \RR^d \times [0,T]} \abs{ u_h(x,t) - u(x,t)} \le C \pars{ \sqrt{T} \omega((\rho_h)^{1/2}) + \omega(M_h \rho_h)},
\]
and the result is proved in view of the choice of $M_h$.
\end{proof}

\subsection{Brownian paths}

For the rest of the paper, we investigate schemes for which $W$ is a standard Brownian motion defined on a probability space $(\Omega, \mcl F, \mcl F_t, \mbf P)$. The expectation and variance with respect to $\mbf P$ are denoted by respectively $\mbf E$ and $\mbf{Var}$. To simplify the presentation, it is assumed that $m = 1$, so that $W$ is one-dimensional, although all three schemes below can be adapted to the case when $m > 1$.

\subsubsection{Regular partitions}

Theorem \ref{T:fixedcontinuouspath} may be applied in this situation by using the fact that oscillations of Brownian paths are controlled by the L\'evy modulus of continuity. More precisely,
\begin{equation}\label{E:Levymod}
	\mbf P \pars{ \limsup_{\delta \to 0} \sup_{\delta \le t \le T-\delta} \frac{\abs{ W(t) - W(t+\delta)}}{\sqrt{2 \delta \abs{ \log \delta } } } = 1} = 1. 
\end{equation}

\begin{theorem} \label{T:regularBrownian}
	Assume \eqref{A:section6}, let $\rho_h$ be defined implicitly by
	\begin{equation} \label{A:BrownianCFL}
		\lambda := \frac{(\rho_h)^{3/4} \abs{\log \rho_h }^{1/2} }{h} < \lambda_0,
	\end{equation}
	and let $u_h$, $\mcl P_h$, and $W_h$ be as in the previous subsection. Then there exists a deterministic constant $C = C_{L, \lambda} > 0$ such that
	\[
		\mbf P \pars{ \limsup_{ h \to 0}  \sup_{(x,t) \in \RR^d \times [0,T]} \frac{ \abs{ u_h(x,t) - u(x,t) } }{ (\rho_h)^{1/4} \abs{\log \rho_h}^{1/2} } \le C(1+T) } = 1.
	\]
\end{theorem}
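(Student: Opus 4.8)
The plan is to derive Theorem~\ref{T:regularBrownian} from Theorem~\ref{T:pathwise} and Lemma~\ref{L:LSestimate} --- essentially by transcribing the proof of Theorem~\ref{T:fixedcontinuouspath} --- after converting \eqref{E:Levymod} into an almost-sure control on the modulus of continuity of $W$. First I would fix the full-measure event on which $W(0) = 0$ and \eqref{E:Levymod} holds, and upgrade \eqref{E:Levymod} from a statement about fixed increments to one about oscillations: on this event there is a random $\delta_0 > 0$ with $\mathrm{osc}(W,s,t) \le 2\sqrt{2\abs{s-t}\,\abs{\log\abs{s-t}}}$ whenever $\abs{s-t} \le \delta_0$. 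This passage is standard, since $\mathrm{osc}(W,s,t)$ is nondecreasing in $\abs{s-t}$ and a dyadic chaining bounds it by the increment modulus at the same scale. Write $\omega$ for the resulting modulus of continuity of this fixed realization.

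Next I would check that the \emph{deterministic} $\rho_h$ of \eqref{A:BrownianCFL}, together with the partition $\mcl P_h$ and the path $W_h$ built from it as in \eqref{E:slowpath}, meets the hypotheses of Theorem~\ref{T:pathwise}. Affineness of $W_h$ on each partition interval and $W_h(0) = 0$ are immediate from the construction. Since \eqref{A:BrownianCFL} forces $\rho_h \to 0$, for $h$ small one has $M_h = \lfloor(\rho_h)^{-1/2}\rfloor$ and $M_h\rho_h \in ((\rho_h)^{1/2} - \rho_h, (\rho_h)^{1/2}]$, and hence, by \eqref{E:slowpath}, the L\'evy bound, and then \eqref{A:BrownianCFL},
\[
	\max_n \abs{W_h(t_{n+1}) - W_h(t_n)} \le \frac{\omega(M_h\rho_h)}{M_h} \le \frac{\omega((\rho_h)^{1/2})}{M_h} \le C_0\,(\rho_h)^{3/4}\abs{\log\rho_h}^{1/2}\,(1+o(1)) = C_0\,\lambda\,h\,(1+o(1)).
\]
Because $\lambda < \lambda_0$, the right-hand side is $\le \lambda_0 h$ for all sufficiently small $h$, which is exactly \eqref{A:pathrestrict}.

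With the hypotheses verified, I would run the estimate from the proof of Theorem~\ref{T:fixedcontinuouspath} unchanged: Theorem~\ref{T:pathwise} applied with $\abs{\mcl P_h} = \rho_h$ and optimized at $\epsilon = \sqrt{T}(\rho_h)^{3/2}/h$ gives $\sup_{x,t}\abs{u_h(x,t) - v(x,t)} \le C\sqrt{T}\,h/\sqrt{\rho_h}$, where $v$ solves \eqref{E:intermediatesolution} with path $W_h$, and \eqref{A:BrownianCFL} turns the right side into $C\lambda^{-1}\sqrt{T}\,(\rho_h)^{1/4}\abs{\log\rho_h}^{1/2}$; meanwhile Lemma~\ref{L:LSestimate} bounds $\sup_{x,t}\abs{v(x,t) - u(x,t)}$ by $C\max_t\abs{W_h(t) - W(t)} \le C\,\omega(M_h\rho_h) \le C\,(\rho_h)^{1/4}\abs{\log\rho_h}^{1/2}$. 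Summing, for all sufficiently small $h$,
\[
	\sup_{(x,t) \in \RR^d \times [0,T]} \abs{u_h(x,t) - u(x,t)} \le C(1+T)\,(\rho_h)^{1/4}\abs{\log\rho_h}^{1/2}
\]
with $C = C_{L,\lambda}$ deterministic; dividing through and letting $h \to 0$ yields the asserted almost-sure $\limsup$ bound. Beyond routine constant-tracking, the main obstacle is the two items flagged above --- promoting \eqref{E:Levymod} to an oscillation modulus valid on \emph{all} short subintervals, and checking that the strict inequality $\lambda < \lambda_0$ leaves enough slack to secure \eqref{A:pathrestrict} uniformly in small $h$; everything else is a direct reprise of the deterministic-path argument.
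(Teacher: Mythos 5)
Your proposal follows the same route as the paper's proof: verify the CFL condition \eqref{A:pathrestrict} almost surely for all sufficiently small $h$ via the L\'evy modulus, then run the estimate of Theorem \ref{T:fixedcontinuouspath} with $\omega((\rho_h)^{1/2})$ replaced by $(\rho_h)^{1/4}\abs{\log\rho_h}^{1/2}$. The one point to repair is the constant in your upgraded oscillation bound: with $\mathrm{osc}(W,s,t)\le 2\sqrt{2\abs{s-t}\,\abs{\log\abs{s-t}}}$ your displayed chain gives $\max_n\abs{W_h(t_{n+1})-W_h(t_n)}\le 2\lambda h(1+o(1))$, and the inference to $\le\lambda_0 h$ then requires $2\lambda<\lambda_0$, not merely $\lambda<\lambda_0$. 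The uniform L\'evy modulus in fact holds with constant $1$ (the limsup over all pairs at scale $\le\delta$ equals $1$ a.s.), so the oscillation bound can be taken as $(1+\delta')\sqrt{2\abs{s-t}\,\abs{\log\abs{s-t}}}$ for any $\delta'>0$ and $h$ small; choosing $\delta'<\lambda_0/\lambda-1$ — which is exactly what the paper does — closes the argument. Everything else, including the choice $\epsilon=\sqrt{T}(\rho_h)^{3/2}/h$ and the Lemma \ref{L:LSestimate} control of $v-u$ by $\nor{W_h-W}{\oo}$, matches the paper's argument.
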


\begin{proof}
Define $M_h := \lfloor (\rho_h)^{-1/2} \rfloor$ and $K_h := \lfloor T/(M_h\rho_h) \rfloor$. The definitions of $W_h$ and $\lambda$ give
	\begin{align*}
		\max_{n=0,1,2\ldots, N - 1 } &\frac{ \abs{ W_h(n\rho_h) - W_h((n+1)\rho_h)} }{h}
		=
		\max_{k=0,1,2 \ldots, K_h} \frac{ \abs{ W(kM_h \rho_h) - W((k+1)M_h\rho_h} }{M_h h} \\
		&=
		\lambda \max_{k=0,1,2 \ldots, K_h} \frac{ \abs{ W(kM_h \rho_h ) - W((k+1)M_h \rho_h) } }{M_h(\rho_h)^{3/4} \abs{ \log \rho_h }^{1/2} } 
		\le \lambda \frac{ \max_{|s-t| \le (\rho_h)^{1/2}} \abs{W(s) - W(t) } }{ (\rho_h)^{1/4}(1-(\rho_h)^{1/2}) \abs{ \log \rho_h }^{1/2} }.
	\end{align*}
	Therefore, in view of \eqref{E:Levymod}, for any $\delta > 0$,
	\[
		\mbf P \pars{ \max_{n=0,1,2, \ldots, N -1} \frac{ \abs{ W_h(n\rho_h) - W_h((n+1)\rho_h)} }{h}
		\le \lambda \frac{1+\delta}{1 - (\rho_h)^{1/2}} \quad \text{for sufficiently small $h$}} = 1.
	\]
	Taking $\delta \in (0, \lambda_0/\lambda - 1)$, this implies that
	\[
		\mbf P \pars{ \limsup_{h \to 0} \max_{n = 0, 1, 2, \ldots, N - 1} \frac{ \abs{ W_h(n \rho_h) - W_h((n+1)\rho_h )} }{h} < \lambda_0} = 1,
	\]
	so that, for some $h_0 > 0$, 
	\[
		\mbf P \pars{ \abs{ W_h(n\rho_h) - W_h((n+1)\rho_h) } \le \lambda_0 h \quad \text{for all } 0 < h < h_0 \text{ and } n = 0, 1, 2, \ldots, N_h - 1 } = 1.
	\]
	Shrinking $h_0$, if necessary, it may be concluded from \eqref{E:regularpartitionestimate} and \eqref{E:Levymod} that
	\[
		\mbf P \pars{ \sup_{ (x,t) \in \RR^d \times [0,T] } \abs{ u_h(x,t) - u(x,t) } \le CT(\rho_h)^{1/4} \abs{ \log \rho_h}^{1/2} \quad \text{for all } 0 < h < h_0 } = 1.
	\]
\end{proof}

Observe that \eqref{A:BrownianCFL} implies that $\lim_{h\to 0} \frac{\log \rho_h}{\log h} = \frac{4}{3}$, so that the convergence rate in Theorem \ref{T:regularBrownian} can be rewritten as
\begin{equation} \label{E:intermsofh}
	 \limsup_{ h \to 0}  \sup_{(x,t) \in \RR^d \times [0,T]} \frac{ \abs{ u_h(x,t) - u(x,t) } }{ h^{1/3} \abs{\log h}^{1/3}  } \le C(1+T).
\end{equation}

\subsubsection{Random partitions}

For the next scheme, the partitions $\mcl P_h$ are defined using a sequence of stopping times adapted to the filtration $\mcl F_t$ of the Brownian motion $W$. By choosing the stopping times carefully to control the maximal oscillations of the Brownian paths, it is possible to recover the error estimate from Theorem \ref{T:regularBrownian}.

%This raises the possibility of improving the rate of convergence in Theorem \ref{T:regularBrownian} by using techniques such as the law of large numbers to eliminate the logarithmic correction in \eqref{E:intermsofh}. Unfortunately, this does not seem to be possible because of the presence of the term
%\begin{equation}\label{E:problemterm}
%	\max_{s,t \in [0,T]} \left\{ C|W_h(s) - W_h(t)| - \frac{|s-t|^2}{2\epsilon} \right\}
%\end{equation}
%in the estimate from Theorem \ref{T:pathwise}. On the one hand,
%\[
%	\max_{s,t \in [0,T]} \mbf E \left\{ C|W_h(s) - W_h(t)| - \frac{|s-t|^2}{2\epsilon} \right\} \le \max_{\rho \ge 0} \left\{ C\rho^{1/2} - \frac{\rho^2}{2\epsilon} \right\} \le \frac{3C^{4/3}}{2^{7/3}} \epsilon^{1/3},
%\]
%which suggests that taking $\epsilon = h$ optimizes the error estimate. However, as $\epsilon, h \to 0$, the term \eqref{E:problemterm} is on the order of $\epsilon^{1/3} \abs{ \log \epsilon}^{2/3}$, and, as a consequence, $\epsilon = \epsilon_h$ must be slightly smaller than $h$ in order to recover the error estimate \eqref{E:intermsofh}.

For $h > 0$, define $\eta_h := h^{1/3} \abs{ \log h}^{-2/3}$, set $T_0 = T_0(h) := 0$, and, for $k \in \NN_0$,
\[
	T_{k+1} = T_{k+1}(h) := \inf \left\{ t > T_k(h) : \mathrm{osc}\pars{ W, T_k(h), t} > \eta_h \right\} \quad \text{and} \quad \tau_{k+1} =  \tau_{k+1}(h) := T_{k+1}(h) - T_k(h).
\]
Observe that $\{T_k\}_{k = 0}^\oo$ is an increasing sequence of stopping times, and, for each fixed $k$, $h \to T_k(h)$ decreases as $h \to 0$. Therefore, by the strong Markov property for Brownian motion, for each fixed $h$, $\{\tau_k(h) \}_{k=1}^\oo$ is a collection of independent, identically distributed random variables. As a result, for any integer $\ell > 0$, there exists a constant $c_\ell > 0$ such that, for all $k$,
\[
	\mbf E[ \tau_k(h)^\ell] = c_\ell (\eta_h)^{2 \ell}.
\]
Indeed, it is well known that the first exit time of a Brownian motion from a bounded interval has finite moments of any order. The exact formula follows from the scaling properties of Brownian motion, so that
\[
	c_\ell := \mbf E \left[ \inf \left\{ t > 0 : \mathrm{osc}\pars{ W,0,t^{1/\ell} } > 1 \right\} \right].
\]

Let $W_h$ be the piecewise interpolation of $W$ over the partition $\left\{ 0 = T_0(h) < T_1(h) < T_2(h) < \cdots \right\}$. That is,
\[
	W_h(t) := W(T_k(h)) + \frac{ W(T_{k+1}(h)) - W(T_k(h))}{\tau_{k+1}(h)} (t - T_k(h)) \quad \text{whenever} \quad T_k(h) \le t < T_{k+1}(h).
\]
Define 
\[
	M_h := \left\lceil \frac{\eta_h}{\lambda_0 h}\right\rceil = \left \lceil (\lambda_0 h^{2/3} |\log h|^{2/3} )^{-1} \right\rceil,
\]
$t_0 = t_0(h) := 0$, and, whenever $k = 0, 1, 2, \ldots$ and $kM_h \le n < (k+1)M_h$,
\[
	t_n = t_n(h) := T_k(h) + (n - kM_h) \frac{\tau_{k+1}(h)}{M_h} \quad \text{and} \quad \Delta t_n = \Delta t_n(h) := t_{n+1}(h) - t_n(h) = \frac{\tau_{k+1}(h)}{M_h}.
\]
Also set
\[
	K_h := \sup \left\{ k \in \NN_0: T_k(h) \le T \right\} \quad \text{and} \quad N_h := \sup \left\{ n \in \NN_0 : t_n(h) \le T \right\},
\]
and note that $h \mapsto K_h$ increases as $h \to 0$.

We have defined the path $W_h$, which is piecewise linear over the partition
\[
	\mcl P_h := \left\{ 0 = t_0(h) < t_1(h) < t_2(h) < \cdots < t_{N_h}(h) \le T \right\},
\]
in such a way that \eqref{A:pathrestrict} holds for $\zeta = W_h$. Indeed, if $n = 0, 1, 2, \ldots, N-1$ and $k$ is such that $kM_h \le t_n < t_{n+1} \le (k+1)M_h$, then
\[
	\abs{ W_h(t_{n+1}) - W_h(t_n)} = \frac{ \abs{ W(T_{k+1}) - W(T_k)} }{M_h} \le \lambda_0 h.
\]

Finally, set $u_h := v_h(\cdot; W_h, \mcl P_h)$ and let $u$ be the stochastic viscosity solution of \eqref{E:firstordereq}.

\begin{theorem} \label{T:randompart}
	Assume \eqref{A:section6}, and let $u_h$ and $u$ be as described above. Then there exists a deterministic constant $C = C_{L} > 0$ such that
	\[
		\mbf P \pars{\limsup_{h \to 0} \max_{(x,t) \in \RR^d \times [0,T]} \frac{ \abs{ u_h(x,t) - u(x,t) }}{h^{1/3} \abs{ \log h}^{1/3} } \le C(1+T)} = 1.
	\]
\end{theorem}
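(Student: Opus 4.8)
The plan is to mimic the proof of Theorem~\ref{T:fixedcontinuouspath}: decompose $u_h - u = (u_h - v) + (v - u)$, where $v$ is the solution of \eqref{E:intermediatesolution} for the path $W_h$, bound $u_h - v$ by the pathwise estimate of Theorem~\ref{T:pathwise} and $v - u$ by the path-stability estimate of Lemma~\ref{L:LSestimate}. The first thing to record is that $(W_h, \mcl P_h)$ satisfies \eqref{A:pathrestrict} \emph{surely}, for every $h$: $W_h(0) = 0$, $W_h$ is affine on each $[t_n,t_{n+1}]$ by construction, and for $n$ with $kM_h \le n < (k+1)M_h$ one has $|W_h(t_{n+1}) - W_h(t_n)| = |W(T_{k+1}) - W(T_k)|/M_h \le \mathrm{osc}(W, T_k, T_{k+1})/M_h = \eta_h/M_h \le \lambda_0 h$ by the choice $M_h = \lceil \eta_h/(\lambda_0 h)\rceil$. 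Hence Theorem~\ref{T:pathwise} applies with $\zeta = W_h$, $\mcl P = \mcl P_h$ and the choice $\epsilon = \epsilon_h := h/|\log h|$, giving
\[
	\sup|u_h - v| \le \frac{|\log h|}{h}\sum_{n=0}^{N_h-1}(\Delta t_n)^2 + C\sqrt{N_h}\,h + \max_{s,t \in [0,T]}\left\{ C|W_h(s) - W_h(t)| - \frac{|s-t|^2\,|\log h|}{2h}\right\}.
\]

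Next I would dispose of the routine pieces. For $v - u$: since $W_h$ interpolates $W$ at the times $T_k$, for $t \in [T_k, T_{k+1}]$ the value $W_h(t)$ is a convex combination of $W(T_k)$ and $W(T_{k+1})$, so $|W_h(t) - W(t)| \le \mathrm{osc}(W, T_k, T_{k+1}) \le \eta_h$ (the incomplete final block contributes no more, as $T < T_{K_h+1}$); Lemma~\ref{L:LSestimate} then yields $\sup|v - u| \le C\eta_h = o(h^{1/3}|\log h|^{1/3})$. The same interpolation gives $|W_h(s) - W_h(t)| \le 2\eta_h + \omega_W(|s-t|)$ for all $s,t$, where $\omega_W$ is the modulus of continuity of $W$; hence the third term above is at most $2C\eta_h + \max_{\ell \ge 0}\{C\omega_W(\ell) - \ell^2|\log h|/(2h)\}$, and by the L\'evy modulus \eqref{E:Levymod} the maximizing $\ell$ tends to $0$, so $\omega_W(\ell)$ may there be replaced by $(1+o(1))\sqrt{2\ell|\log\ell|}$ and a direct optimization bounds this term by a deterministic multiple of $\eta_h + \epsilon_h^{1/3}|\log\epsilon_h|^{2/3}\asymp h^{1/3}|\log h|^{1/3}$. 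For $\sqrt{N_h}\,h$, with $N_h = K_h M_h$ and $K_h := \#\{k : T_k \le T\}$: by the strong Markov property the $\tau_k(h)$ are, for fixed $h$, i.i.d.\ with mean $c_1\eta_h^2$, so a Wald/renewal argument gives $\mbf E[K_h] \le (T+1)/(c_1\eta_h^2)$ and a Cram\'er bound $\mbf P\!\left(K_h > 2T/(c_1\eta_h^2)\right) \le e^{-c\eta_h^{-2}}$; on the complementary event $N_h \le (2T/c_1)\eta_h^{-2}M_h \le C_T h^{-4/3}|\log h|^{2/3}$, whence $C\sqrt{N_h}\,h \le C\sqrt T\, h^{1/3}|\log h|^{1/3}$.

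The crux is the first term, $\frac{|\log h|}{h}\sum_n(\Delta t_n)^2 = \frac{|\log h|}{hM_h}\sum_{k=1}^{K_h}\tau_k(h)^2$. A crude bound $\sum_n(\Delta t_n)^2 \le T\max_n\Delta t_n$, or even the sharp a.s.\ bound $\max_k\tau_k(h) \le C\eta_h^2|\log h|$, loses a logarithmic factor and only yields the suboptimal rate $h^{1/3}|\log h|^{2/3}$; one genuinely needs $\sum_{k=1}^{K_h}\tau_k(h)^2 \asymp \eta_h^2$ with \emph{no} extra log. For the mean, Wald's identity gives $\mbf E\bigl[\sum_{k=1}^{K_h}\tau_k(h)^2\bigr] = \mbf E[K_h]\,\mbf E[\tau_1(h)^2] \le \tfrac{c_2(T+1)}{c_1}\eta_h^2$, using $\mbf E[\tau_1(h)^2] = c_2\eta_h^4$. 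For the fluctuations, write $\tau_k(h) = \eta_h^2\sigma_k$ with $\sigma_k$ i.i.d.\ unit exit times (scale invariance): on $\{K_h \le 2T/(c_1\eta_h^2)\}$ one has $\sum_{k=1}^{K_h}\tau_k(h)^2 \le \eta_h^4\sum_{k=1}^{\lceil 2T/(c_1\eta_h^2)\rceil}\sigma_k^2$, and since $\sigma^2$ is sub-exponential ($\mbf P(\sigma > x)\le Ce^{-cx}$), a Bernstein-type inequality bounds the probability that this exceeds $\tfrac{3c_2T}{c_1}\eta_h^2$ by $\exp(-c\eta_h^{-1}) = \exp(-ch^{-1/3}|\log h|^{2/3})$. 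These probabilities are summable along $h_j = 1/j$, so Borel--Cantelli yields, almost surely, $\sum_{k=1}^{K_{h_j}}\tau_k(h_j)^2 \le \tfrac{3c_2T}{c_1}\eta_{h_j}^2$ and $K_{h_j} \le \tfrac{2T}{c_1}\eta_{h_j}^{-2}$ for all large $j$; a standard (if fiddly) interpolation, exploiting that the renewal partition and hence all these quantities change only at a discrete set of thresholds and vary slowly over $h\in[h_{j+1},h_j]$ since $h_{j+1}/h_j\to 1$, upgrades these bounds to all small $h$. Then $\sum_n(\Delta t_n)^2 \le \tfrac{\lambda_0 h}{\eta_h}\cdot\tfrac{3c_2T}{c_1}\eta_h^2 = \tfrac{3c_2\lambda_0 T}{c_1}h\eta_h$, so the first term is $\le \tfrac{3c_2\lambda_0 T}{c_1}|\log h|\,\eta_h = \tfrac{3c_2\lambda_0 T}{c_1}h^{1/3}|\log h|^{1/3}$.

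Adding the three bounds, each of the form (deterministic constant depending only on $L$ and the structural constants $\lambda_0, c_1, c_2$) times $(1+T)h^{1/3}|\log h|^{1/3}$, together with the $o(h^{1/3}|\log h|^{1/3})$ contribution from $v-u$, gives $\limsup_{h\to 0}\sup_{(x,t)}\frac{|u_h(x,t)-u(x,t)|}{h^{1/3}|\log h|^{1/3}} \le C(1+T)$ almost surely with $C$ deterministic. The main obstacle is precisely the sharp, log-free renewal estimate for $\sum_k\tau_k(h)^2$: it forces an exponential concentration inequality for sums of a random number of sub-exponential variables plus a Borel--Cantelli/interpolation argument, in place of the first-moment or deterministic bounds that would only deliver the weaker rate $h^{1/3}|\log h|^{2/3}$.
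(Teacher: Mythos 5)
Your proposal is correct and follows the same architecture as the paper's proof: the decomposition $u_h-u=(u_h-v)+(v-u)$, Lemma \ref{L:LSestimate} for $v-u$, Theorem \ref{T:pathwise} with the same choice $\epsilon_h=h/|\log h|$, and the correct identification of the crux, namely a log-free almost-sure bound $\sum_k\tau_k(h)^2\lesssim\eta_h^2$. Where you differ is in the probabilistic technology used to prove the three auxiliary facts. The paper's Lemmas \ref{L:maxstoppingtime} and \ref{L:epsiloninvterm} use only Chebyshev with second and fourth moments ($\sigma^2=c_2-c_1^2$ and $c_4-c_2^2$), which forces Borel--Cantelli along geometric subsequences $h_m=\beta^{-m}$ (the resulting probabilities $\asymp\beta^{-2m/3}$ are summable there but not along $h_j=1/j$), followed by a monotonicity interpolation and the limit $\alpha,\beta\to1^+$; your Chernoff/Bernstein bounds are stronger and let you take $h_j=1/j$ directly, at the cost of needing concentration for sums of a random number of heavy-ish variables. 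One imprecision: $\sigma_k^2$ is not sub-exponential --- $\mbf P(\sigma^2>x)=\mbf P(\sigma>\sqrt x)\le Ce^{-c\sqrt x}$ is only sub-Weibull of order $1/2$ --- but the generalized Bernstein inequality for such variables still yields the tail $\exp(-c(n\gamma)^{1/2})=\exp(-c\eta_h^{-1})$ you claim, so nothing breaks; the paper sidesteps this entirely by using only $c_4<\infty$. Likewise your treatment of the penalization term via the L\'evy modulus and a deterministic optimization replaces the paper's Lemma \ref{L:penalize}, which re-derives the needed oscillation control by discretization, the reflection principle, and Borel--Cantelli; both give $\epsilon_h^{1/3}|\log\epsilon_h|^{2/3}\asymp h^{1/3}|\log h|^{1/3}$ (though \eqref{E:Levymod} as stated controls increments of a fixed gap $\delta$, so strictly you need the standard uniform form of the L\'evy modulus, which is what Lemma \ref{L:penalize} effectively reproves). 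In short: same proof skeleton, heavier concentration inequalities in place of the paper's more elementary moment bounds.
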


We proceed with a series of lemmas that indicate how to control the various terms appearing in the estimate from Theorem \ref{T:pathwise}.

\begin{lemma} \label{L:maxstoppingtime}
	\[
		\mbf P \pars{ \limsup_{h \to 0} K_h \eta_h^2 \le \frac{T}{c_1} } = 1.
	\]
\end{lemma}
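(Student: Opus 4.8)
The plan is to compare $K_h$ with a classical renewal counting process, read off an easy convergence-in-probability statement, and then upgrade it to the almost sure one-sided bound by means of an exponential tail estimate together with the monotonicity of $h\mapsto K_h$ recorded just above the lemma.

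\emph{Step 1: reduction to a renewal estimate.} Fix $h>0$. As noted before the lemma, $\{\tau_k(h)\}_{k\ge 1}$ are i.i.d.\ with the law of $\eta_h^2\,\sigma$, where $\sigma := \inf\{t>0:\mathrm{osc}(W,0,t)>1\}$; in particular $\mbf E[\sigma]=c_1\in(0,\oo)$ and $c_2:=\mbf E[\sigma^2]<\oo$, since $\sigma$ is dominated by the first exit time of $W$ from $(-1/2,1/2)$, which has moments of every order. Because $K_h=\sup\{k\in\NN_0:T_k(h)\le T\}$ and $T_m(h)=\sum_{j=1}^m\tau_j(h)$, we have, for every integer $m\ge 0$,
\[
	\mbf P(K_h\ge m)=\mbf P\Bigl(\textstyle\sum_{j=1}^{m}\tau_j(h)\le T\Bigr).
\]
The elementary renewal theorem already gives $\eta_h^2 K_h\to T/c_1$ in probability as $h\to 0$; the content of the lemma is the corresponding almost sure upper bound, which is the delicate point.

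\emph{Step 2: exponential tail.} Fix $\epsilon>0$ and $\mu>0$, and set $\beta(\mu):=\mbf E[e^{-\mu\sigma}]\in(0,1)$ (the strict bound $\beta(\mu)<1$ holds because $\sigma>0$ almost surely). Since $\mbf E[e^{-(\mu/\eta_h^2)\tau_1(h)}]=\beta(\mu)$, the exponential Chebyshev inequality and independence give
\[
	\mbf P(K_h\ge m)\le e^{\mu T/\eta_h^2}\,\beta(\mu)^{m}.
\]
Using $e^{-x}\le 1-x+\tfrac12 x^2$ for $x\ge 0$ yields $\beta(\mu)\le 1-\mu c_1+\tfrac12\mu^2 c_2$, hence $\log\beta(\mu)\le -\mu c_1+\tfrac12\mu^2 c_2$. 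Taking $m=m_h:=\lceil(1+\epsilon)T/(c_1\eta_h^2)\rceil$ and recalling $\log\beta(\mu)<0$, one gets
\[
	\mbf P\Bigl(\eta_h^2 K_h>(1+\epsilon)\tfrac{T}{c_1}\Bigr)\le\mbf P(K_h\ge m_h)\le\exp\Bigl(\tfrac{T}{\eta_h^2}\bigl(\mu+\tfrac{1+\epsilon}{c_1}\log\beta(\mu)\bigr)\Bigr)\le\exp\Bigl(-\tfrac{T}{\eta_h^2}\bigl(\epsilon\mu-\tfrac{(1+\epsilon)c_2}{2c_1}\mu^2\bigr)\Bigr),
\]
and for $\mu=\mu(\epsilon)>0$ sufficiently small the constant $c(\epsilon):=\epsilon\mu-\tfrac{(1+\epsilon)c_2}{2c_1}\mu^2$ is positive.

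\emph{Step 3: Borel--Cantelli and interpolation in $h$.} Applying Step 2 along $h_n:=1/n$, where $\eta_{h_n}^2=n^{-2/3}(\log n)^{-4/3}$, gives $\sum_n\mbf P(\eta_{h_n}^2K_{h_n}>(1+\epsilon)T/c_1)\le\sum_n e^{-c(\epsilon)T n^{2/3}(\log n)^{4/3}}<\oo$, so by Borel--Cantelli, almost surely $\eta_{h_n}^2 K_{h_n}\le(1+\epsilon)T/c_1$ for all large $n$. To pass to arbitrary $h\to 0$ I would use the two monotonicities recorded above the lemma: $h\mapsto T_k(h)$ is nondecreasing, hence $h\mapsto K_h$ is nonincreasing, while $h\mapsto\eta_h$ is increasing for small $h$. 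Thus, for $\tfrac1{n+1}\le h\le\tfrac1n$ with $h$ small,
\[
	\eta_h^2 K_h\le \eta_{1/n}^2\,K_{1/(n+1)}=\eta_{1/(n+1)}^2 K_{1/(n+1)}\cdot\frac{\eta_{1/n}^2}{\eta_{1/(n+1)}^2},
\]
and $\eta_{1/n}^2/\eta_{1/(n+1)}^2\to 1$. Hence $\limsup_{h\to 0}\eta_h^2 K_h\le(1+\epsilon)T/c_1$ almost surely, and letting $\epsilon\downarrow 0$ along a countable sequence completes the proof.

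The main obstacle is precisely the combination in Steps 2--3: the renewal identity alone gives only convergence in probability, and upgrading to the almost sure $\limsup$ over the continuum of $h$ requires both a quantitative (exponential) tail bound — for which the finiteness of the second moment $c_2$ is exactly what one needs — and the monotonicity of $h\mapsto K_h$ to handle the values of $h$ between the points $1/n$. Everything else (the scaling identity for $\tau_k(h)$, the Chebyshev estimate, the ratio bookkeeping) is routine.
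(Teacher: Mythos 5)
Your argument is correct and reaches the stated bound, but it takes a genuinely different route from the paper. The paper controls $\mbf P\bigl(K_{h}>\alpha T/(c_1\eta_{h}^2)\bigr)$ by rewriting it as $\mbf P\bigl(\sum_{k\le k_m}\tau_k\le T\bigr)$ and applying the ordinary second-moment Chebyshev inequality to the centered sum; this yields only polynomial decay of order $\eta_{h_m}^2\sim\beta^{-2m/3}$, which is summable only along a geometric subsequence $h_m=\beta^{-m}$, and the interpolation to all $h$ then requires the careful bookkeeping of two constants $1<\beta^{2/3}<\alpha$ so that the mismatch between $\eta_{h_m}$ and $\eta_{h_{m+1}}$ is absorbed. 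You instead use the Laplace transform $\beta(\mu)=\mbf E[e^{-\mu\sigma}]$ (automatically finite since $\sigma\ge 0$, so no extra moment hypotheses beyond the $c_1,c_2$ the paper already uses, via $e^{-x}\le 1-x+\tfrac12x^2$), obtaining a stretched-exponential tail $\exp(-c(\epsilon)T/\eta_h^2)$. This is strong enough to run Borel--Cantelli along the dense sequence $h_n=1/n$, which makes the interpolation step essentially automatic because $\eta_{1/n}^2/\eta_{1/(n+1)}^2\to1$; the monotonicity facts you invoke ($h\mapsto K_h$ nonincreasing, $h\mapsto\eta_h$ increasing for small $h$) are exactly those recorded in the paper. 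Your version is cleaner and gives a quantitatively stronger tail estimate, at the modest cost of introducing the Chernoff machinery.

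One small correction: $\sigma=\inf\{t>0:\mathrm{osc}(W,0,t)>1\}$ is \emph{not} dominated by the first exit time of $W$ from $(-1/2,1/2)$ --- at that exit time the oscillation need only be about $1/2$; the inclusion you have in mind goes the other way. The correct domination is $\sigma\le\inf\{t>0:|W(t)|\ge a\}$ for any $a>1$ (or $a=1$ almost surely, since Brownian motion immediately crosses any level it reaches), which still gives finite moments of every order, so the facts $c_1\in(0,\oo)$ and $c_2<\oo$ that your proof actually uses are unaffected.
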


\begin{proof}
	Fix $\alpha$ and $\beta$ such that $1 < \beta^{2/3} < \alpha$, and define $h_m := \beta^{-m}$. Note that
	\[
		\lim_{m \to \oo} \frac{\eta_{h_{m+1}}}{\eta_{h_m}} = \frac{1}{\beta^{1/3}}.
	\]
	The monotonicity of $K_h$ and $\eta_h$ implies that
	\begin{equation}
		\mbf P \pars{ \sup_{h_{m+1} \le h < h_m} K_h \eta_h^2 > \frac{\alpha T}{c_1} }
		\le \mbf P \pars{ K_{h_{m+1}} > \frac{\alpha T}{c_1 \eta_{h_m}^2 } }. \label{E:fixh}
	\end{equation}
	Set
	\[
		k_m := \left\lceil \frac{\alpha T}{c_1 \eta_{h_m}^2} \right\rceil,
	\]
	so that
	\[
		k_m c_1 \eta_{h_{m+1}}^2 \ge \alpha T \pars{ \frac{\eta_{h_{m+1}}}{\eta_{h_m}} }^2 \xrightarrow{m \to \oo} \alpha \beta^{-2/3}T > T,
	\]
	and therefore, for any fixed $\gamma > 0$ and all sufficiently large $m$, $k_m c_1 \eta_{h_{m+1}}^2 \ge (1 + \gamma)T$.

	Define $\sigma^2 := c_2 - c_1^2$, so that $\mbf{Var}(\tau_k(h)) = \sigma^2 \eta_h^4$ for all $k$ and $h$. Continuing \eqref{E:fixh} and applying Markov's inequality yields, for some fixed positive constant $C > 0$ and for all sufficiently large $m$,
	\begin{align*}
		\mbf P \pars{ K_{h_{m+1}} > \frac{\alpha T}{c_1 \eta_{h_m}^2 } } 
		&= \mbf P \pars{ \sum_{k=1}^{k_m} \tau_k(h_{m+1}) \le T }
		\le \mbf P \pars{ \sum_{k=1}^{k_m} \pars{ \tau_k(h_{m+1}) - c_1 \eta_{h_{m+1}}^2 } \le - \gamma T} \\
		&\le \frac{ k_m \sigma^2 \eta_{h_{m+1}}^4 }{ \gamma^2 T^2 }
		\le C \beta^{-2m/3}.
	\end{align*}
	The Borel-Cantelli lemma applied to the events
	\[
		E_m := \left\{ \sup_{h_{m+1} \le h < h_m} K_h \eta_h^2 > \frac{\alpha T}{c_1} \right\}
	\]
	gives
	\[
		\mbf P \pars{ \limsup_{h \to 0} K_h \eta_h^2 > \frac{\alpha T}{c_1} } = \mbf P \pars{ \limsup_{m \to \oo} E_m} = 0,
	\]
	and we may conclude upon sending $\alpha \to 1^+$.
\end{proof}

\begin{lemma} \label{L:epsiloninvterm}
	\[
		\mbf P \pars{ \limsup_{h \to 0} \frac{1}{h \eta_h} \sum_{n=0}^{N-1} \pars{ \Delta t_n}^2 \le \frac{T\lambda_0 c_2}{c_1} }  = 1.
	\]
\end{lemma}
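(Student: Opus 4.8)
The plan is to reduce $\sum_{n=0}^{N-1}(\Delta t_n)^2$ to a sum of squares of the stopping-time increments $\tau_k(h)$, to control the number of terms via Lemma \ref{L:maxstoppingtime}, and then to run a law-of-large-numbers argument built on the moment identity $\mbf E[\tau_k(h)^\ell]=c_\ell\eta_h^{2\ell}$, in the same Borel--Cantelli style as the proof of Lemma \ref{L:maxstoppingtime}.

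For the algebraic reduction, recall that the $k$-th block of $\mcl P_h$ consists of $M_h$ subintervals of common length $\tau_{k+1}(h)/M_h$, so that
\[
	\sum_{n=0}^{N-1}(\Delta t_n)^2 \le \sum_{k=0}^{K_h} M_h\pars{\frac{\tau_{k+1}(h)}{M_h}}^2 = \frac{1}{M_h}\sum_{k=1}^{K_h+1}\tau_k(h)^2,
\]
and since $M_h=\lceil \eta_h/(\lambda_0 h)\rceil$ implies $h\eta_h M_h\ge \eta_h^2/\lambda_0$, we obtain $\frac{1}{h\eta_h}\sum_n(\Delta t_n)^2 \le \frac{\lambda_0}{\eta_h^2}\sum_{k=1}^{K_h+1}\tau_k(h)^2$. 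By Lemma \ref{L:maxstoppingtime}, for any fixed $\gamma>0$ we have, with probability one, $K_h+1\le \bar K_h:=\lceil (1+\gamma)T/(c_1\eta_h^2)\rceil$ for all small $h$; since $\tau_k(h)\ge0$ and $\bar K_h\eta_h^2\to(1+\gamma)T/c_1$ as $h\to0$, it then suffices to show
\[
	\mbf P\pars{\limsup_{h\to 0}\frac{1}{\bar K_h\,\eta_h^4}\sum_{k=1}^{\bar K_h}\tau_k(h)^2 \le c_2}=1,
\]
after which one sends $\gamma\to0$.

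To prove this last bound, fix $h$ and observe that $\{\tau_k(h)\}_{k\ge1}$ are i.i.d.\ with $\mbf E[\tau_k(h)^2]=c_2\eta_h^4$, and, since the exit time of a Brownian motion from a bounded interval has finite moments of every order, $\mathrm{Var}(\tau_k(h)^2)=(c_4-c_2^2)\eta_h^8<\infty$. Chebyshev's inequality gives, for every $\epsilon>0$,
\[
	\mbf P\pars{\sum_{k=1}^{\bar K_h}\tau_k(h)^2 > (c_2+\epsilon)\bar K_h\eta_h^4}\le \frac{c_4-c_2^2}{\epsilon^2\,\bar K_h}.
\]
Evaluating this along a geometric sequence $h_m:=\beta^{-m}$ with $\beta>1$, and using that $\bar K_{h_m}$ grows like $\eta_{h_m}^{-2}$, hence at least like $\beta^{2m/3}$, the right-hand sides are summable in $m$, so Borel--Cantelli yields that, with probability one, $\sum_{k=1}^{\bar K_{h_m}}\tau_k(h_m)^2\le(c_2+\epsilon)\bar K_{h_m}\eta_{h_m}^4$ for all large $m$. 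Passing from this subsequence to all $h$ uses the monotonicity of $h\mapsto\eta_h$ and $h\mapsto M_h$ and the fact that $h\mapsto K_h$ is non-increasing: for $h\in[h_{m+1},h_m)$ the quantity $\frac{\lambda_0}{\eta_h^2}\sum_{k=1}^{\bar K_h}\tau_k(h)^2$ is squeezed between multiples---tending to $1$ as $\beta\to1^+$---of its values at the two endpoints, the one genuinely delicate point being the comparison of $\tau_k(h)$ across the multiplicatively short range $[h_{m+1},h_m)$, which can be handled by dominating $\tau_k(h)$ there by $\sup_{h'\in[h_{m+1},h_m)}\tau_k(h')$ and exploiting that $\tau_k(h')/\eta_{h'}^2$ has a law independent of $h'$ with exponential tails. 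Letting $\epsilon\to0$, $\beta\to1^+$, and $\gamma\to0$ then completes the proof.

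The hard part is exactly this last uniformity step: because the $\tau_k(h)$ depend on $h$, the sums $\sum_{k=1}^{\bar K_h}\tau_k(h)^2$ form a triangular array rather than partial sums of a single i.i.d.\ sequence, so one cannot invoke the strong law of large numbers directly; the Borel--Cantelli-over-a-geometric-net-plus-monotonicity scheme inherited from the proof of Lemma \ref{L:maxstoppingtime} is what supplies the needed almost-sure control, and the comparison of the increments $\tau_k(h)$ over short ranges of $h$ is the most technical ingredient.
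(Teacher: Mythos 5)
Your proposal follows the paper's proof almost step for step: the same algebraic reduction to $\frac{\lambda_0}{\eta_h^2}\sum_{k}\tau_k(h)^2$, the same use of Lemma \ref{L:maxstoppingtime} to cap the number of summands, and the same Chebyshev-plus-Borel--Cantelli estimate along a geometric net $h_m=\beta^{-m}$, with the constants tracked correctly. The one genuine gap is exactly the step you flag at the end: passing from the net to all $h\in[h_{m+1},h_m)$. Your proposed fix --- dominate $\tau_k(h)$ by $\sup_{h'\in[h_{m+1},h_m)}\tau_k(h')$ and invoke the exponential tails of $\tau_k(h')/\eta_{h'}^2$ --- does not close it. First, $\tau_k(h')=T_k(h')-T_{k-1}(h')$ is a difference of two nondecreasing functions of $h'$ and is not itself monotone in $h'$, so the supremum over the uncountable family is not controlled by the marginal tails; the crude bound $\sup_{h'}\tau_k(h')\le T_k(h_m)-T_{k-1}(h_{m+1})$ picks up the accumulated discrepancy between the two partitions, which is not small. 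Second, even granting a pointwise dominating variable, your law-of-large-numbers argument needs its second moment to be $(1+o(1))\,c_2\eta_h^4$ in order to recover the sharp constant $c_2$, and nothing in the sketch delivers that.

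The paper avoids the issue by making the comparison deterministic and pathwise \emph{before} any probability enters: for $h\in[h_{m+1},h_m)$ one has $1/M_h\le\lambda_0 h_m/\eta_{h_{m+1}}$ and $K_h\le K_{h_{m+1}}$ (monotonicity of $h\mapsto K_h$), and the increments $\tau_k(h)$ of the finer partition are controlled by those of the coarser partition at the endpoint $h_m$, using $T_k(h)\le T_k(h_m)$ together with the observation that an interval on which the oscillation of $W$ stays below $\eta_h<\eta_{h_m}$ cannot contain a full interval of the $\eta_{h_m}$-partition. This bounds $\frac{1}{M_h}\sum_{k\le K_h+1}\tau_k(h)^2$ by a deterministic multiple of $\sum_{k\le K_{h_{m+1}}+1}\tau_k(h_m)^2$, so that Chebyshev and Borel--Cantelli are only ever applied to the single i.i.d.\ family $\{\tau_k(h_m)\}_{k}$ for each $m$, exactly as in Lemma \ref{L:maxstoppingtime}. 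Replacing your final step by this deterministic partition comparison, the rest of your argument goes through.
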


\begin{proof}
	Fix $\alpha$ and $\beta$ satisfying $1 < \beta^{7/3} < \alpha$ and set $h_m := \beta^{-m}$. If, for some $m$, $h_{m+1} \le h < h_m$, then
	\[
		\sum_{n=0}^{N_h -1} \pars{\Delta t_n(h)}^2 \le \sum_{k=1}^{K_h+1} M_h \pars{ \frac{\tau_{k}(h)}{M_h} }^2 \le \lambda_0 \frac{h_m}{\eta_{h_{m+1}}} \sum_{k=1}^{K_{h_{m+1}}+1} \tau_{k}(h_m)^2.
	\]
	Fix $m_0 \in \NN$ and define the event
	\[
		E_{m_0} := \left\{ K_{h_{m+1}} + 1 \le \hat K_m := \left\lceil \frac{\alpha T}{c_1 \eta_{h_{m+1}}^2} \right\rceil \quad \text{for all} \quad m \ge m_0 \right\}.
	\]
	In view of Lemma \ref{L:maxstoppingtime}, $\lim_{m_0 \to \oo} \mbf P\pars{ E_{m_0}} = 1$.
	
	Now, for any $m \ge m_0$,
	\begin{align*}
		\mbf P & \left( \left\{ \sup_{h_{m+1} \le h < h_m} \frac{1}{h \eta_h} \sum_{n=0}^{N_h-1} \pars{\Delta t_n(h)}^2 > \frac{\alpha^2 T \lambda_0 c_2}{c_1} \right\} \cap E_{m_0} \right)\\
		&\le \mbf P \pars{ \sum_{k=1}^{\hat K_m} \tau_{k}(h_m)^2 > \frac{\alpha^2 T c_2 h_{m+1}( \eta_{h_{m+1}})^2}{c_1 h_m} }\\
		&= \mbf P \pars{ \sum_{k=1}^{\hat K_m} \pars{ \tau_{k}(h_m)^2 - c_2 \eta_{h_m}^4} > \frac{\alpha^2 T c_2 h_{m+1} (\eta_{h_{m+1}})^2}{c_1 h_m} - \hat K_m c_2 \eta_{h_m}^4}\\
		&\le \mbf P \pars{  \sum_{k=1}^{\hat K_m} \pars{ \tau_{k}(h_m)^2 - c_2 \eta_{h_m}^4} > \frac{\alpha T c_2 \eta_{h_m}^2}{c_1} \pars{ \frac{\alpha h_{m+1} (\eta_{h_{m+1} })^2 }{h_m (\eta_{h_m})^2} - \frac{\eta_{h_m}^2}{\eta_{h_{m+1}}^2} } - c_2 \eta_{h_m}^4}.
	\end{align*}
	Since
	\[
		\lim_{m \to \oo} \pars{ \frac{\alpha h_{m+1} \eta_{h_{m+1}}^2}{h_m \eta_{h_m}^2} - \frac{\eta_{h_m}^2}{\eta_{h_{m+1}}^2}} = \frac{\alpha}{\beta^{5/3}} - \beta^{2/3} > 0,
	\]
	it follows that, for some fixed $\gamma > 0$, all sufficiently large $m_0$, and all $m > m_0$,
	\[
		\mbf P \left( \left\{ \sup_{h_{m+1} \le h < h_m} \frac{1}{h \eta_h} \sum_{n=0}^{N_h-1} \pars{\Delta t_n(h)}^2 > \frac{\alpha^2 T \lambda_0 c_2}{c_1} \right\} \cap E_{m_0} \right)
		\le \mbf P \pars{ \sum_{k=1}^{\hat K_m} \pars{ \tau_{k}(h_m)^2 - c_2 \eta_{h_m}^4} > \gamma \eta_{h_m}^2}.
	\]
	Set $\sigma^2 := c_4 - c_2^2 > 0$. Then Markov's inequality gives, for some constant $C > 0$ independent of $m$,
	\begin{align*}
		\mbf P & \left( \left\{ \sup_{h_{m+1} \le h < h_m} \frac{1}{h \eta_h} \sum_{n=0}^{N_h - 1} \pars{\Delta t_n(h)}^2 > \frac{\alpha^2 T \lambda_0 c_2}{c_1} \right\} \cap E_{m_0} \right)\\
		&\le \frac{ \hat K_m \sigma^2 \eta_{h_m}^4}{\gamma^2}\le C \eta_{h_m}^2 \le C \beta^{-2m/3}.
	\end{align*}
	An application of the Borel-Cantelli lemma for the events
	\[
		 \left\{ \sup_{h_{m+1} \le h < h_m} \frac{1}{h \eta_h} \sum_{n=0}^{N_h - 1} \pars{\Delta t_n(h)}^2 > \frac{\alpha^2 T \lambda_0 c_2}{c_1} \right\} \cap E_{m_0}
	\]
	yields
	\[
		\mbf P \pars{ \left\{ \limsup_{h \to 0}  \frac{1}{h \eta_h} \sum_{n=0}^{N_h-1} \pars{\Delta t_n(h)}^2 > \frac{\alpha^2 T \lambda_0 c_2}{c_1} \right\} \cap E_{m_0} } = 0.
	\]
	Sending $m_0 \to \oo$ and then $\alpha \to 1^+$ finishes the proof.
\end{proof}

\begin{lemma} \label{L:penalize}
	For any deterministic constant $C> 0$,
	\[
		\mbf P \pars{ \limsup_{\epsilon \to 0} \frac{ \max_{s,t \in [0,T]} \left\{ C \abs{W(s) - W(t)} - \frac{|s-t|^2}{2 \epsilon} \right\} }{ \epsilon^{1/3} \abs{ \log \epsilon }^{2/3} } \le \frac{4C^{4/3}}{3^{2/3}} } =1.
	\]
\end{lemma}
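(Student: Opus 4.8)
The plan is to convert the random maximum into a deterministic one–variable optimization, using the Lévy modulus of continuity \eqref{E:Levymod}, and then an elementary calculus (equivalently Young's inequality) estimate.

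First I would fix $\delta>0$ and use \eqref{E:Levymod} to obtain, with probability one, a random $r_0=r_0(\delta)\in(0,e^{-1})$ such that $\abs{W(s)-W(t)}\le(1+\delta)\sqrt{2\abs{s-t}\log(1/\abs{s-t})}$ for all $s,t\in[0,T]$ with $\abs{s-t}\le r_0$. The maximum in the statement is always $\ge 0$ (take $s=t$), and for $\abs{s-t}\ge r_0$ one has $C\abs{W(s)-W(t)}-\tfrac{\abs{s-t}^2}{2\epsilon}\le 2C\sup_{[0,T]}\abs{W}-\tfrac{r_0^2}{2\epsilon}<0$ once $\epsilon$ is small (depending on the sample path, since $\sup_{[0,T]}\abs{W}<\oo$ a.s.); hence for small $\epsilon$ only pairs with $r:=\abs{s-t}<r_0$ contribute. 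Writing $\kappa:=\sqrt 2\,C(1+\delta)$, it remains to estimate $h(r):=\kappa\sqrt{r\log(1/r)}-\tfrac{r^2}{2\epsilon}$ on $(0,r_0)$.

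Next I would split $(0,r_0)$ at $r=\epsilon$. On $\{r\le\epsilon\}$, monotonicity of $x\mapsto x\log(1/x)$ on $(0,e^{-1})$ gives $h(r)\le\kappa\sqrt{\epsilon\log(1/\epsilon)}=\kappa\,\epsilon^{1/2}\abs{\log\epsilon}^{1/2}=o\pars{\epsilon^{1/3}\abs{\log\epsilon}^{2/3}}$, which is negligible. On $\{\epsilon<r<r_0\}$ one has $\log(1/r)\le\abs{\log\epsilon}$, so $h(r)\le A\sqrt r-\tfrac{r^2}{2\epsilon}$ with $A:=\kappa\,\abs{\log\epsilon}^{1/2}$; the elementary identity $\max_{r>0}\{A\sqrt r-\tfrac{r^2}{2\epsilon}\}=\tfrac{3}{2^{7/3}}A^{4/3}\epsilon^{1/3}$ (maximizer $r=(A\epsilon/2)^{2/3}$) then yields
\[
h(r)\ \le\ \tfrac{3}{2^{7/3}}\,\kappa^{4/3}\,\epsilon^{1/3}\abs{\log\epsilon}^{2/3}\ =\ 3\cdot 2^{-5/3}\,(C(1+\delta))^{4/3}\,\epsilon^{1/3}\abs{\log\epsilon}^{2/3}.
\]
(The same bound follows from Young's inequality $uv\le\tfrac34 u^{4/3}+\tfrac14 v^4$ applied with $v=(2/\epsilon)^{1/4}r^{1/2}$ and $u=\kappa(\epsilon/2)^{1/4}(\log(1/r))^{1/2}$, so that $\tfrac14 v^4=\tfrac{r^2}{2\epsilon}$ and $uv=\kappa\sqrt{r\log(1/r)}$.) Combining the two ranges, dividing by $\epsilon^{1/3}\abs{\log\epsilon}^{2/3}$, taking $\limsup_{\epsilon\to0}$, and then letting $\delta\to0$ produces the claimed inequality with constant $3\cdot 2^{-5/3}C^{4/3}$, which is $\le \tfrac{4C^{4/3}}{3^{2/3}}$ because $3\cdot 2^{-5/3}\le 4\cdot 3^{-2/3}\Longleftrightarrow 3^5\le 2^{11}$.

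The only mildly delicate bookkeeping is the replacement of $\log(1/r)$ by $\abs{\log\epsilon}$: one must check both that the tail $r\le\epsilon$ is of lower order (done above) and that the maximizer of the truncated problem lies in $\{r\ge\epsilon\}$ for small $\epsilon$, which holds since $(A\epsilon/2)^{2/3}$ is of order $\epsilon^{2/3}\abs{\log\epsilon}^{1/3}\gg\epsilon$. Tracking instead that $\log(1/r)\sim\tfrac23\abs{\log\epsilon}$ at the true optimizer would sharpen the constant to $3^{1/3}/2$, but this refinement is unnecessary. In contrast with the proofs of Lemmas \ref{L:maxstoppingtime} and \ref{L:epsiloninvterm}, this argument uses only the almost-sure modulus of continuity of a single Brownian path, with no independence or concentration input.
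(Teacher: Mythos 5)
Your proof is correct, and it reaches the conclusion by a genuinely different route from the paper, with a sharper constant ($3\cdot 2^{-5/3}C^{4/3}$ in place of $4\cdot 3^{-2/3}C^{4/3}$; your verification $3^5\le 2^{11}$ is right). You front-load all of the probability into the classical L\'evy modulus and then perform a purely deterministic one-variable optimization in $r=|s-t|$, splitting at $r=\epsilon$ so that $\log(1/r)$ can be replaced by $|\log\epsilon|$; the computation of $\max_{r>0}\{A\sqrt r-\tfrac{r^2}{2\epsilon}\}=\tfrac{3}{2^{7/3}}A^{4/3}\epsilon^{1/3}$ checks out, as does the negligibility of the ranges $r\le\epsilon$ and $r\ge r_0$. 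The paper instead proves the needed almost-sure oscillation bound from scratch: it discretizes $[0,T]$ into blocks of a length $\delta$ chosen as an explicit function of $\epsilon$ (namely $\delta\sim C^{2/3}3^{-1/3}\beta^{1/3}\epsilon^{2/3}|\log\epsilon|^{1/3}$), bounds the maximum by optimizing over the block index $n$, estimates the failure probability of the block oscillation bound via the reflection principle and Gaussian tails, and closes with Borel--Cantelli along the geometric sequence $\epsilon_m=\alpha^{-m}$, using monotonicity to interpolate between $\epsilon_{m+1}$ and $\epsilon_m$. What the paper's route buys is self-containedness and uniformity of method with Lemmas \ref{L:maxstoppingtime} and \ref{L:epsiloninvterm}; what yours buys is brevity and a better constant. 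One small caveat: your argument needs the uniform two-parameter form of L\'evy's theorem (supremum over all pairs with $|s-t|\le\delta$), whereas \eqref{E:Levymod} is literally stated for increments of a fixed length $\delta$. The uniform form is the standard statement and is already what the paper implicitly invokes in the proof of Theorem \ref{T:regularBrownian}, so this is a matter of citation rather than a gap, but you should state the version you are using.
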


\begin{proof}
	Let $1 < \beta < \alpha$. If, for some $\delta > 0$,
	\begin{equation} \label{E:Brownianincrements}
		\mathrm{osc}(W, k\delta, (k+1)\delta) \le \sqrt{2\beta} \delta^{1/2} \abs{ \log \delta}^{1/2} \quad \text{for all} \quad k = 0, 1, 2, \ldots, \left\lceil \frac{T}{\delta} \right\rceil,
	\end{equation}
	then
	\begin{align*}
		\max_{s,t \in [0,T]} \left\{ C |W(s) - W(t)| - \frac{|s-t|^2}{2\epsilon} \right\}
		&\le \sqrt{2\beta} C \delta^{1/2} \abs{ \log \delta}^{1/2} + \max_{n \in \NN_0} \left\{ \sqrt{2\beta} C \delta^{1/2} \abs{ \log \delta}^{1/2}n - \frac{n^2 \delta^2}{2\epsilon} \right\}\\
		&\le \sqrt{2\beta} C \delta^{1/2} \abs{ \log \delta}^{1/2} + \beta C^2 \frac{\abs{ \log \delta} }{\delta} \epsilon.
	\end{align*}
	Taking $\delta := C^{2/3} 3^{-1/3} \beta^{1/3} \epsilon^{2/3} \abs{ \log \epsilon}^{1/3}$ yields, for some deterministic function $c(\epsilon) \xrightarrow{\epsilon \to 0} 0$,
	\[
		\max_{s,t \in [0,T]} \left\{ C |W(s) - W(t)| - \frac{|s-t|^2}{2\epsilon} \right\} \le \frac{4 C^{4/3}}{3^{2/3}} \beta^{2/3} \epsilon^{1/3} |\log \epsilon|^{2/3}(1 + c(\epsilon)),
	\]
	and, therefore, if $\epsilon$ is sufficiently small,
	\[
		\max_{s,t \in [0,T]} \left\{ C |W(s) - W(t)| - \frac{|s-t|^2}{2\epsilon} \right\} \le \frac{4 C^{4/3}}{3^{2/3}} \alpha^{2/3} \epsilon^{1/3} |\log \epsilon|^{2/3}.
	\]
	Define
	\[
		\epsilon_m := \alpha^{-m} \quad \text{and} \quad \delta_m := \frac{ C^{2/3} \beta^{1/3} \epsilon_m^{2/3} \abs{ \log \epsilon_m}^{1/3} }{3^{1/3}},
	\]
	and note that
	\[
		\lim_{m \to \oo} \frac{\epsilon_m^{1/3} |\log \epsilon_m|^{2/3} }{\epsilon_{m+1}^{1/3} |\log \epsilon_{m+1}|^{2/3}} = \alpha^{1/3}.
	\]
	It follows that, for sufficiently large $m$, 
	\begin{align*}
		&\mbf P \pars{ \sup_{\epsilon_{m+1} \le \epsilon < \epsilon_m} \frac{ \max_{s,t \in [0,T]} \left\{ C \abs{ W(s) - W(t)} - \frac{|s-t|^2}{2\epsilon}  \right\} }{\epsilon^{1/3} \abs{ \log \epsilon}^{2/3}} > \frac{4 C^{4/3} \alpha}{3^{2/3}} }\\
		&\le \mbf P \pars{ \max_{s,t \in [0,T]} \left\{ C |W(s) - W(t)| - \frac{|s-t|^2}{2\epsilon_m} \right\} > \frac{4 C^{4/3} \alpha \epsilon_{m+1}^{1/3} |\log \epsilon_{m+1}|^{2/3} }{3^{2/3}}  }\\
		&\le \mbf P \pars{ \max_{s,t \in [0,T]} \left\{ C |W(s) - W(t)| - \frac{|s-t|^2}{2\epsilon_m} \right\} > \frac{ 4 C^{4/3} \alpha^{2/3} \epsilon_{m}^{1/3} |\log \epsilon_{m}|^{2/3}}{3^{2/3}} }\\
		&\le \mbf P \pars{ \mathrm{osc}(W, k\delta_m, (k+1)\delta_m) > \sqrt{2\beta} \delta_m^{1/2} \abs{ \log \delta_m}^{1/2}  \text{ for some }  k = 0, 1, 2, \ldots, \left\lceil \frac{T}{\delta_m} \right\rceil}\\
		&\le \left \lceil \frac{T}{\delta_m} \right \rceil \mbf P \pars{ \max_{[0,1]} W - \min_{[0,1]} W > \sqrt{2\beta} \abs{ \log \delta_m}^{1/2}}\\
		&\le 2 \left \lceil \frac{T}{\delta_m} \right \rceil \mbf P \pars{ \max_{[0,1]} W > \sqrt{2\beta} \abs{\log \delta_m}^{1/2} }\\
		&\le CT \delta_m^{\beta - 1} \le CT \alpha^{-\gamma m} \quad \text{for} \quad \gamma = \frac{2}{3} (\beta - 1) > 0.
	\end{align*}
	The symmetry and scaling properties of Brownian motion, as well as the reflection principle, were all used above. In particular, since the processes
	\[
		\left\{ t \mapsto \max_{s \in [0,t]} W(s) - W(t) \right\} \quad \text{and} \quad |W|
	\]
	are identically distributed, so are the random variables
	\[
		\max_{[0,1]} W - \min_{[0,1]} W \quad \text{and} \quad \max_{[0,1]} |W| = \max \left\{ \max_{[0,1]} W, - \min_{[0,1]} W \right\}.
	\]

	The Borel-Cantelli lemma implies that
	\[
		\mbf P \pars{ \limsup_{\epsilon \to 0} \frac{ \max_{s,t \in [0,T]} \left\{ C \abs{W(s) - W(t)} - \frac{|s-t|^2}{2 \epsilon} \right\} }{ \epsilon^{1/3} \abs{ \log \epsilon }^{2/3} } > \frac{4 C^{4/3} \alpha}{3^{2/3}} } =0,
	\]
	and sending $\alpha \to 1^+$ gives the result.
\end{proof}

\begin{proof}[Proof of Theorem \ref{T:randompart}]
	Let $v$ be the solution of \eqref{E:intermediatesolution}. Then Lemma \ref{L:LSestimate} gives, for some $C = C_{L} > 0$,
	\[
		\sup_{(x,t) \in \RR^d \times [0,T]} \abs{ v(x,t) - u(x,t)} \le C \abs{ W(t) - W_h(t)} \le C \max_{k = 0, 1, 2, \ldots, K_h} \mathrm{osc}\pars{ W, T_k(h), T_{k+1}(h)} \le C \frac{h^{1/3}}{ \abs{ \log h}^{2/3}}.
	\]
	
	Next, define $\epsilon_h := \frac{h}{ \abs{ \log h}}$ and recall the pathwise estimate from Theorem \ref{T:pathwise}:
	\begin{gather*}
		\max_{ (x,t) \in \RR^d \times [0,T] } \abs{u_h(x,t) - v(x,t)} \\
		\le \frac{1}{\epsilon_h} \sum_{n=0}^{N_h-1} (\Delta t_n(h))^2 +  C \sqrt{N_h} h + \max_{s,t \in [0,T]} \left\{ C \abs{W_h(s) - W_h(t)} - \frac{|s-t|^2}{2\epsilon_h}  \right\}.
	\end{gather*}
	
	From the definitions of $N_h$, $M_h$, and $K_h$, and from Lemma \ref{L:maxstoppingtime}, it follows that, for some $C = C_{L} > 0$, with probability one, for all sufficiently small $h$,
	\[
		C\sqrt{N_h} h \le C \sqrt{(K_h+1)M_h} h \le CT h^{1/3} \abs{ \log h}^{1/3}.
	\]
	
	Meanwhile, Lemma \ref{L:epsiloninvterm} yields $C = C_{L} > 0$ such that, with probability one, for all sufficiently small $h$,
	\[
		\frac{1}{\epsilon_h} \sum_{n=0}^{N_h-1} (\Delta t_n(h))^2
		= \frac{1}{h \eta_h} \sum_{n = 0}^{N_h-1} (\Delta t_n)^2 \cdot h^{1/3} \abs{ \log h}^{1/3}
		\le CT h^{1/3} \abs{ \log h }^{1/3}.
	\]
	
	In view of the definition of $W_h$,
	\[
		\max_{ 0 \le t \le T} \abs{ W_h(t) - W(t)} \le \max_{k = 0, 1, 2, \ldots, K_h} \mathrm{osc}\pars{ W, T_k(h),T_{k+1}(h)} \le \eta_h,
	\]
	so that, with probability one, for all $h$,
	\[
		\frac{ \max_{s,t \in [0,T]} \left\{ C \abs{ W_h(s) - W_h(t)} - \frac{|s-t|^2}{2 \epsilon_h} \right\} }{h^{1/3} \abs{ \log h}^{1/3} }
		\le \frac{ \max_{s,t \in [0,T]} \left\{ C \abs{W(s) - W(t)} - \frac{|s-t|^2}{2 \epsilon_h} \right\} }{ h^{1/3} \abs{ \log h}^{1/3} }
		+ \frac{C}{\abs{ \log h}},
	\]
	while Lemma \ref{L:penalize} implies that, with probability one and for all sufficiently small $h$,
	\[
		\max_{s,t \in [0,T]} \left\{ C \abs{W(s) - W(t)} - \frac{|s-t|^2}{2\epsilon_h} \right\} 
		\le C \epsilon_h^{1/3} |\log \epsilon_h|^{2/3} \le C h^{1/3}\abs{ \log h}^{1/3}.
	\]
	Combining all terms in the estimate finishes the proof.
\end{proof}

\subsubsection{Scaled random walks and convergence in law} 
The point of view for the preceding approximations was pathwise; that is, the schemes converged for $\mbf P$-almost every sample path of Brownian motion. Here, the strategy is to use independent Rademacher random variables to build an object that converges to the solution of \eqref{E:eq} in distribution. This construction has the advantage that it is simple to implement numerically.

Fix a probability space $(\mcl A, \mcl G, \mbb P)$, not necessarily related to $(\Omega, \mcl F, \mbf P)$, and let $\{\xi_n \}_{n=1}^\oo: \mcl A \to \{-1,1\}$ be independent and identically distributed with
\[
	\mbb P( \xi_n = 1) = \mbb P(\xi_n = -1) = \frac{1}{2}.
\]
Define $\rho_h$ by
\[
	\lambda := \frac{(\rho_h)^{3/4}}{h} \le \lambda_0,
\]
and, as before, set $M_h = \lfloor (\rho_h)^{-1/2} \rfloor$, $\mcl P_h := \{t_n\}_{n =0}^N = \left\{ n \rho_h \wedge T \right\}_{n = 0}^{N}$, $W_h(0) = 0$ and, for $k \in \NN_0$ and $t \in [kM_h \rho_h , (k+1)M_h \rho_h)$,
\[
	W_h(t) := W_h(kM_h \rho_h) + \frac{ \xi_k }{\sqrt{M_h \rho_h} }(t - kM_h \rho_h).
\]
The path $W_h$ is a parabolically scaled simple random walk, and therefore, as is well known, as $h \to 0$, $W_h$ converges to the Wiener process $B$ in distribution. More precisely, if $\mu$ is the Wiener measure on $X := C([0,T], \RR)$ and $\mu_h$ is the probability measure on $X$ induced by $W_h$, then $\mu_h$ converges weakly to $\mu$ as $h \to 0$, that is, for any bounded continuous function $\phi: X \to \RR$,
\[
	\lim_{h \to 0} \int_X \phi \; d\mu_h = \int_X \phi\;d\mu.
\]

Define $u_h := v_h(\cdot; W_h, \mcl P_h) \in BUC(\RR^d \times [0,T])$ and let $v \in BUC(\RR^d \times [0,T])$ be the solution of \eqref{E:intermediatesolution}.
\begin{theorem} \label{T:law}
	Assume \eqref{A:section6} and let $u_h$ and $u$ be as described above. As $h \to 0$, $u_h$ converges to $u$ in distribution.
\end{theorem}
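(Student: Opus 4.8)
The plan is to interpolate through the exact classical viscosity solution $v$ of \eqref{E:intermediatesolution} driven by the piecewise‐linear (hence Lipschitz) path $W_h$: we show that $v$ converges to $u$ in distribution by combining Donsker's invariance principle with the path‐stability of Lemma \ref{L:LSestimate}, and that $u_h$ and $v$ are asymptotically indistinguishable by means of the pathwise estimate of Theorem \ref{T:pathwise}.

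\textbf{Step 1: $v \Rightarrow u$.} Let $\Psi: C([0,T],\RR) \to C(\RR^d \times [0,T])$ map a path $\zeta$ to the pathwise viscosity solution of \eqref{E:firstordereq} with $W$ replaced by $\zeta$; by Lemma \ref{L:LSestimate} the map $\Psi$ is globally Lipschitz for the uniform norms, hence continuous when the target carries the (Polish) topology of local uniform convergence. Since $W_h$ is Lipschitz in $t$, the pathwise notion coincides with the classical viscosity notion, so $\Psi(W_h) = v$; likewise $\Psi$ evaluated along a Brownian path has the law of $u$. As $h \to 0$, $W_h$ is a parabolically rescaled simple random walk, so $W_h \Rightarrow B$ in $C([0,T],\RR)$, where $B$ is a standard Brownian motion. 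By the continuous mapping theorem, $v = \Psi(W_h) \Rightarrow \Psi(B) \stackrel{d}{=} u$.

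\textbf{Step 2: $\nor{u_h - v}{L^\infty(\RR^d \times [0,T])} \to 0$ in probability.} A direct computation from \eqref{A:simplerandomwalks} shows that, for every realization, the increment of $W_h$ over a partition interval of $\mcl P_h$ equals $\sqrt{\rho_h/M_h} \le \lambda_0 h\,(1+o(1))$, so $W_h$ and $\mcl P_h$ satisfy \eqref{A:pathrestrict} for all small $h$. Applying Theorem \ref{T:pathwise} pathwise (with $\zeta = W_h$), for every $\epsilon > 0$,
\[
	\nor{u_h - v}{\oo} \le \frac{1}{\epsilon}\sum_{n=0}^{N-1}(\Delta t_n)^2 + C\sqrt N\, h + g_\epsilon(W_h), \qquad g_\epsilon(\zeta) := \max_{s,t \in [0,T]}\left\{ C\abs{\zeta(s) - \zeta(t)} - \frac{\abs{s-t}^2}{2\epsilon} \right\}.
\]
Because $\mcl P_h = \{ n\rho_h \wedge T\}$ has $N \le T/\rho_h + 1$ intervals of length $\le \rho_h$, and $\rho_h^{3/4} = \lambda h$, the first two terms are deterministic and bounded by $2T\rho_h/\epsilon + C\rho_h^{1/4}$. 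Fix $\delta > 0$ and $\epsilon > 0$: for all $h$ small enough the first two terms are $< \delta$, so $\mbb P(\nor{u_h - v}{\oo} > 2\delta) \le \mbb P(g_\epsilon(W_h) > \delta)$. For fixed $\epsilon$, $g_\epsilon$ is a continuous functional on $C([0,T],\RR)$, so $g_\epsilon(W_h) \Rightarrow g_\epsilon(B)$, and the Portmanteau theorem gives $\limsup_{h \to 0}\mbb P(g_\epsilon(W_h) > \delta) \le \mbb P(g_\epsilon(B) \ge \delta)$. Finally, Lemma \ref{L:penalize} yields $g_\epsilon(B) \downarrow 0$ almost surely as $\epsilon \to 0$, so $\mbb P(g_\epsilon(B) \ge \delta) \to 0$; letting $\epsilon \to 0$ shows $\limsup_{h\to 0}\mbb P(\nor{u_h - v}{\oo} > 2\delta) = 0$, and since $\delta$ was arbitrary, $\nor{u_h - v}{\oo} \to 0$ in probability.

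\textbf{Step 3: conclusion, and the main difficulty.} Steps 1 and 2 together with the converging‐together lemma (Slutsky's theorem in the Polish space $C(\RR^d \times [0,T])$ with the local uniform topology — uniform, a fortiori local uniform, closeness in probability suffices) give $u_h \Rightarrow u$, as claimed. The delicate point is the penalization term $g_\epsilon(W_h)$ coming from Theorem \ref{T:pathwise}: in the almost‐sure results of Theorems \ref{T:regularBrownian} and \ref{T:randompart} one is forced to couple $\epsilon = \epsilon_h$ to $h$, which requires a quantitative oscillation estimate for $W_h$ uniform in $h$. The observation that unlocks the present argument is that, because $\mcl P_h$ is the uniform partition, $\sum_n(\Delta t_n)^2 = O(\rho_h)$, so $\epsilon^{-1}\sum_n(\Delta t_n)^2 \to 0$ for any \emph{fixed} $\epsilon$; this decouples $\epsilon$ from $h$ and reduces everything to the weak convergence $W_h \Rightarrow B$ together with the soft fact $g_\epsilon(B) \downarrow 0$. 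A secondary point is to realize the convergence in distribution in a separable function space so that the converging‐together lemma applies.
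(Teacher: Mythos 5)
Your proof is correct and follows the same overall architecture as the paper: decompose $u_h - u$ through the intermediate solution $v$ of \eqref{E:intermediatesolution}, send $v \Rightarrow u$ by Donsker plus the continuous mapping theorem via the Lipschitz solution map of Lemma \ref{L:LSestimate}, and conclude with a converging-together (Slutsky) argument. The one place where you genuinely diverge is Step 2. The paper does not fix $\epsilon$; it observes that the scaled random walk satisfies the CFL bound $\abs{W_h(t_{n+1}) - W_h(t_n)} = \sqrt{\rho_h/M_h} \le \lambda_0 h$ \emph{surely}, so the penalization term is bounded deterministically by $C\lambda_0 h + (C\lambda_0 h)^2\epsilon/(2\rho_h^2)$ exactly as in the proof of Theorem \ref{T:fixedcontinuouspath}, and the optimal choice $\epsilon = \sqrt{T}\,\rho_h^{3/2}/h$ then yields the sure, quantitative bound \eqref{E:inlawestimate}, $\nor{u_h - v}{\oo} \le C(1+T)h^{1/3}$, with no probabilistic input at all. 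Your closing remark that coupling $\epsilon$ to $h$ "requires a quantitative oscillation estimate for $W_h$ uniform in $h$" is therefore slightly misleading in this setting: unlike the Brownian cases of Theorems \ref{T:regularBrownian} and \ref{T:randompart}, the random walk's increments over partition intervals are deterministic in magnitude, so the paper's route is if anything the more elementary one here and buys an explicit rate for the coupling distance. Your route buys something else: it shows that convergence in probability of $\nor{u_h - v}{\oo}$ follows from soft facts alone (weak convergence of $W_h$ and continuity of the penalization functional), so it would survive in situations where no sure CFL bound or rate is available. Your care in working in the Polish local-uniform topology to justify the converging-together lemma is a point the paper glosses over. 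Two minor quibbles: as you note, $M_h = \lfloor \rho_h^{-1/2} \rfloor$ only gives $\sqrt{\rho_h/M_h} \le \lambda_0 h(1+o(1))$, so one should take $\lambda < \lambda_0$ strictly (the paper has the same slight imprecision); and for $g_\epsilon(B) \downarrow 0$ a.s.\ you do not need Lemma \ref{L:penalize} — uniform continuity of any fixed continuous path already gives it.
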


\begin{proof}
Observe first that
\[
	\abs{ W_h(t_{n+1}) - W_h(t_n)} = \sqrt{\frac{\rho_h}{M_h}} \le (\rho_h)^{3/4} \le \lambda_0 h,
\]
so that $W_h$ satisfies \eqref{A:pathrestrict}. Then \eqref{E:regularpartitionestimate} becomes, for some $C = C_{L, \lambda} > 0$,
\begin{equation}\label{E:inlawestimate}
	\max_{(x,t) \in \RR^d \times [0,T]} \abs{ u_h(x,t) - v(x, t)} \le C(1+T) (\rho_h)^{1/4} = C(1+T) h^{1/3}. 
\end{equation}
Lemma \ref{L:LSestimate} implies that the map 
\[
	S: X = C([0,T], \RR) \ni \zeta \mapsto  v \in BUC(\RR^d \times [0,T]) =: Y,
\]
where $v$ is the solution of \eqref{E:zetaeq}, is uniformly continuous. Let $\tilde \nu _h$ and $\nu$ be the push-forwards by $S$ of respectively $\mu_h$ and $\mu$, that is, for any measurable $\psi: Y \to \RR$,
\[
	\int_Y \psi \; d\nu = \int_X \psi \circ S \; d\mu,
\]
with the analogous relation holding for $\nu_h$ and $\nu$. It is clear that $\tilde \nu_h$ converges weakly to $\nu$. On the other hand, if $\nu_h$ is the measure on $BUC(\RR^d \times [0,T])$ induced by $u_h$, then \eqref{E:inlawestimate} and Slutzky's theorem imply that, as $h \to 0$, $\nu_h$ converges weakly to $\nu$.
\end{proof}


\begin{thebibliography}{99}
	\bibitem{BS} Barles, G.; Souganidis, P. E. Convergence of approximation schemes for fully nonlinear second order equations. Asymptotic Anal. 4 (1991), no. 3, 271--283.
	
	\bibitem{BJ1} Barles, Guy; Jakobsen, Espen R. On the convergence rate of approximation schemes for Hamilton-Jacobi-Bellman equations. M2AN Math. Model. Numer. Anal. 36 (2002), no. 1, 33--54.
	
	\bibitem{BJ2} Barles, Guy; Jakobsen, Espen R. Error bounds for monotone approximation schemes for Hamilton-Jacobi-Bellman equations. SIAM J. Numer. Anal. 43 (2005), no. 2, 540--558. 
	
	\bibitem{BJ3} Barles, Guy; Jakobsen, Espen R. Error bounds for monotone approximation schemes for parabolic Hamilton-Jacobi-Bellman equations. Math. Comp. 76 (2007), no. 260, 1861--1893.
	
	\bibitem{BM1} Buckdahn, Rainer; Ma, Jin. Stochastic viscosity solutions for nonlinear stochastic partial differential equations. I. Stochastic Process. Appl. 93 (2001), no. 2, 181--204.

	\bibitem{BM2} Buckdahn, Rainer; Ma, Jin. Stochastic viscosity solutions for nonlinear stochastic partial differential equations. II. Stochastic Process. Appl. 93 (2001), no. 2, 205--228.
	
	\bibitem{CS} Caffarelli, Luis A.; Souganidis, Panagiotis E. A rate of convergence for monotone finite difference approximations to fully nonlinear, uniformly elliptic PDEs. Comm. Pure Appl. Math. 61 (2008), no. 1, 1--17.
	
	\bibitem{CFL} Courant, R.; Friedrichs, K.; Lewy, H. On the partial difference equations of mathematical physics. IBM J. Res. Develop. 11 1967 215--234.
	
	\bibitem{CFO} Caruana, Michael; Friz, Peter K.; Oberhauser, Harald. A (rough) pathwise approach to a class of non-linear stochastic partial differential equations. Ann. Inst. H. Poincar\'e Anal. Non Lin\'eaire 28 (2011), no. 1, 27--46.
	
	\bibitem{CIL} Crandall, Michael G.; Ishii, Hitoshi; Lions, Pierre-Louis. User's guide to viscosity solutions of second order partial differential equations. Bull. Amer. Math. Soc. (N.S.) 27 (1992), no. 1, 1--67. 
	
	\bibitem{CL} Crandall, M. G.; Lions, P.-L. Two approximations of solutions of Hamilton-Jacobi equations. Math. Comp. 43 (1984), no. 167, 1--19.
	
	\bibitem{I} Ishii, Hitoshi. Hamilton-Jacobi equations with discontinuous Hamiltonians on arbitrary open sets. Bull. Fac. Sci. Engrg. Chuo Univ. 28 (1985), 33--77.
	
	\bibitem{J1} Jakobsen, E. R. On error bounds for approximation schemes for non-convex degenerate elliptic equations. BIT 44 (2004), no. 2, 269--285.
	
	\bibitem{J2} Jakobsen, Espen R. On error bounds for monotone approximation schemes for multi-dimensional Isaacs equations. Asymptot. Anal. 49 (2006), no. 3-4, 249--273.
	
	\bibitem{FGLS} Friz, Peter K.; Gassiat, Paul; Lions, Pierre-Louis; Souganidis, Panagiotis E. Eikonal equations and pathwise solutions to fully non-linear SPDEs. Stoch. Partial Differ. Equ. Anal. Comput. 5 (2017), no. 2, 256--277.
	
	\bibitem{GG} Gassiat, Paul; Gess, Benjamin. Regularization by noise for stochastic Hamilton-Jacobi equations. Probab. Theory Related Fields 173 (2019), no. 3-4, 1063--1098.
	
	\bibitem{GTT} Gubinelli, Massimiliano; Tindel, Samy; Torrecilla, Iv\'an. Controlled viscosity solutions of fully nonlinear rough PDEs. arXiv:1403.2832 [math.PR]
	
	\bibitem{HKRS} Hoel, H., Karlsen, K.H., Risebro, N.H., Storr{\o}sten, E.B. Numerical methods for conservation laws with rough flux. Stoch PDE: Anal Comp (2019). https://doi.org/10.1007/s40072-019-00145-7
	
	\bibitem{K} Krylov, N. V. On the rate of convergence of finite-difference approximations for elliptic Isaacs equations in smooth domains. Comm. Partial Differential Equations 40 (2015), no. 8, 1393--1407.
	
	\bibitem{KT1} Kuo, Hung Ju; Trudinger, Neil S. Discrete methods for fully nonlinear elliptic equations. SIAM J. Numer. Anal. 29 (1992), no. 1, 123--135.
	
	\bibitem{KT2} Kuo, Hung-Ju; Trudinger, Neil S. Positive difference operators on general meshes. Duke Math. J. 83 (1996), no. 2, 415--433.
	
	\bibitem{LP} Lions, Pierre-Louis; Perthame, Beno\^it. Remarks on Hamilton-Jacobi equations with measurable time-dependent Hamiltonians. Nonlinear Anal. 11 (1987), no. 5, 613--621. 

	\bibitem{LSfirst} Lions, Pierre-Louis; Souganidis, Panagiotis E. Fully nonlinear stochastic partial differential equations. C. R. Acad. Sci. Paris S\'er. I Math. 326 (1998), no. 9, 1085--1092.
	
	\bibitem{LSnonsmooth} Lions, Pierre-Louis; Souganidis, Panagiotis E. Fully nonlinear stochastic partial differential equations: non-smooth equations and applications. C. R. Acad. Sci. Paris S\'er. I Math. 327 (1998), no. 8, 735--741.
	
	\bibitem{LSsecondorder} Lions, Pierre-Louis; Souganidis, Panagiotis E. Uniqueness of weak solutions of fully nonlinear stochastic partial differential equations. C. R. Acad. Sci. Paris S\'er. I Math. 331 (2000), no. 10, 783--790.

	\bibitem{LSsemilinear} Lions, Pierre-Louis; Souganidis, Panagiotis E. Fully nonlinear stochastic partial differential equations with semilinear stochastic dependence. C. R. Acad. Sci. Paris S\'er. I Math. 331 (2000), no. 8, 617--624.
	
	\bibitem{LSbook} Lions, Pierre-Louis; Souganidis, Panagiotis E. Forthcoming book on fully nonlinear stochastic partial differential equations.
		
	\bibitem{N} Nunziante, Diana. Uniqueness of viscosity solutions of fully nonlinear second order parabolic equations with discontinuous time-dependence. Differential Integral Equations 3 (1990), no. 1, 77--91.
	
	\bibitem{Se} Seeger, Benjamin. Homogenization of pathwise Hamilton?Jacobi equations. J. Math. Pures Appl. (9) 110 (2018), 1--31.
	
	\bibitem{Seperron} Seeger, Benjamin. Perron's method for pathwise viscosity solutions. Comm. Partial Differential Equations 43 (2018), no. 6, 998--1018.
	
	\bibitem{S} Souganidis, Panagiotis E. Approximation schemes for viscosity solutions of Hamilton-Jacobi equations. J. Differential Equations 59 (1985), no. 1, 1--43.
	
	\bibitem{Snotes} Souganidis, Panagiotis E. Fully nonlinear first- and second-order stochastic partial differential equations. Preprint, arXiv:1809.01748 [math.AP]

	\bibitem{Sproduct} Souganidis, Panagiotis E. Max-min representations and product formulas for the viscosity solutions of Hamilton-Jacobi equations with applications to differential games. Nonlinear Anal. 9 (1985), no. 3, 217--257. 
	
	\bibitem{SY} Souganidis, Panagiotis. E.; Yip, Nung Kwan. Uniqueness of motion by mean curvature perturbed by stochastic noise. Ann. Inst. H. Poincar\'e Anal. Non Lin\'eaire 21(2004), no. 1, 1--23. 
	
	\bibitem{T} Turanova, Olga Error estimates for approximations of nonhomogeneous nonlinear uniformly elliptic equations. Calc. Var. Partial Differential Equations 54 (2015), no. 3, 2939--2983.
\end{thebibliography}
\end{document}